\documentclass[notitlepage,reqno,11pt]{amsart}
\usepackage{latexsym,amssymb, epsfig, amsmath,amsfonts, subfigure,amsthm}

\usepackage{rotating}
\usepackage[toc,page]{appendix}
\usepackage{color}
\usepackage{multirow}
\usepackage{relsize}
\usepackage{microtype}
\usepackage[foot]{amsaddr}

\usepackage[colorlinks, allcolors=blue]{hyperref}

\usepackage[margin=1in]{geometry}



\numberwithin{equation}{section}
 \newtheorem{assumption}{Assumption}[section]
\newtheorem{lemma}{Lemma}[section]
\newtheorem{theorem}{Theorem}[section]

\newtheorem{coro}{Corollary}[section]

\newtheorem{remark}{Remark}[section]

\newlength{\defbaselineskip}
\setlength{\defbaselineskip}{\baselineskip}
\newcommand{\setlinespacing}[1]%
           {\setlength{\baselineskip}{#1 \defbaselineskip}}

\newcommand{\RR}{{\mathbb R}}

\newcommand{\NN}{{\mathbb N}}

\def\E{\mathbb{E}}
\def\P{\mathbb{P}}
\def\R{\mathbb{R}}

\newcommand{\sF}{{\mathcal{F}}}

\newcommand{\sL}{{\mathcal{L}}}

\newcommand{\beql}[1]{\begin{equation}\label{#1}}
\newcommand{\eeq}{\end{equation}}

\newcommand{\beqal}[1]{\begin{eqnarray}\label{#1}}
\newcommand{\eeqa}{\end{eqnarray}}
\newcommand{\beq}{\begin{displaymath}}
\newcommand{\eeqno}{\end{displaymath}}
\newcommand{\bali}[1]{\begin{align}\label{#1}}
\newcommand{\eali}{\begin{align}}
\newcommand{\balino}{\begin{align*}}
\newcommand{\ealino}{\begin{align*}}

\newcommand{\ep}{\epsilon}

\newcommand{\mfI}{\mathfrak{I}}
\newcommand{\mfi}{\mathfrak{i}}

\newcommand{\mfF}{\mathfrak{F}}
\newcommand{\mfa}{\mathfrak{a}}

\newcommand{\bD}{{\mathbf D}}

\newcommand{\bC}{{\mathbf C}}

\newcommand{\bone}{{\mathbf 1}}

\newcommand{\qasq}{\quad\mbox{as}\quad}
\newcommand{\qinq}{\quad\mbox{in}\quad}

\newcommand{\non}{\nonumber}

\newcommand{\baa}{\begin{eqnarray*}}
\newcommand{\eaa}{\end{eqnarray*}}

\newcommand{\ttl}{\Large PDE model for multi-patch epidemic models with migration \\[5pt] and infection-age dependent infectivity }

\begin{document}

\title[]{\ttl}

\author[Guodong \ Pang]{Guodong Pang$^*$}
\address{$^*$Department of Computational Applied Mathematics and Operations Research,
George R. Brown School of Engineering,
Rice University,
Houston, TX 77005}
\email{gdpang@rice.edu}

\author[{\'E}tienne \ Pardoux]{{\'E}tienne Pardoux$^\dag$}
\address{$^\dag$Aix--Marseille Universit{\'e}, CNRS, I2M, Marseille, France}
\email{etienne.pardoux@univ-amu.fr}


\begin{abstract} 
We study a stochastic epidemic model with multiple patches (locations), where individuals in each patch are categorized into three compartments, Susceptible, Infected and Recovered/Removed, and may migrate from one patch to another in any of the compartments. 
Each individual is associated with a random infectivity function which dictates the force of infection depending upon the age of infection (elapsed time since infection). We prove a functional law of large number for the epidemic evolution dynamics including the aggregate infectivity process, the numbers of susceptible and recovered individuals as well as the number of infected individuals at each time that have been infected for a certain amount of time.  From the limits, we derive a PDE model for the density of the number of infected individuals with respect to the infection age, which is a system of linear PDE equations with a boundary condition that is determined by a set of  integral equations. 
\end{abstract}

\keywords{multi-patch epidemic model, migration, infection-age dependent infectivity, functional law of large number, system of linear PDE equations, Poisson random measure}

\maketitle



\allowdisplaybreaks

\section{Introduction}

Multi-patch epidemic models have been used to study infectious disease dynamics in different geographic areas \cite{sattenspiel1995structured,allen2007asymptotic,xiao2014transmission,bichara2018multi,PP-2020b,FPP-2022}. Most of the literature concerns Markovian models and the associated ODEs. 
In \cite{PP-2020b}, the authors study a non-Markovian multi-patch model with general exposed and infectious distributions as well as Markovian migration among the patches. That work extends the study of the homogeneous stochastic epidemic models in  \cite{PP-2020}. However, both works assumed a constant infection rate. In \cite{FPP2020b}, a stochastic epidemic model is studied to take into account varying infectivity, 
capturing the varying viral load phenomenon during infection as observed in \cite{he2020temporal}. In fact, Kermack and McKendrick \cite{KM27} already proposed deterministic epidemic models to study varying infectivity, and the FLLN limit in  \cite{FPP2020b} coincides with the integral equations in \cite{KM27}.
By tracking the age of infection (elapsed time since infection)  in that model with varying infectivity, in \cite{PP-2021}, the authors have studied the process counting the number of individuals at each time that have been infected for less than a certain amount of time, and derived a PDE model for the density of that process with respect to the infection age. The PDE model is comparable with the well known PDE models introduced by Kermack and McKendrick \cite{KM32}. 
 This homogeneous model with varying infectivity in  \cite{FPP2020b} is extended to a multi-patch multi-type model in \cite{FPP-2022}, however, the processes do not take into account the infection ages. 
We also refer to  \cite{clemenccon2008stochastic} and \cite{foutel2020individual} for individual-based stochastic epidemic models with contact-tracing and the associated PDE models as large population limits.

 In the present paper, we extend the study of epidemic models with infection-age dependent infectivity in \cite{PP-2021} to multi-patch models, and derive the associated PDE models. 
Specifically, we consider an individual-based stochastic epidemic model with multiple patches, where each individual is associated with a random infectivity function of the same law, and can migrate from one patch to another in each of the infection stages (susceptible, infected or recovered). 
The evolution dynamics at each time is described by the total force of infection, the number of susceptible individuals, the number of infected individuals that have been infected for less than a certain amount of time, and the number of recovered individuals. We prove a functional law of large numbers (FLLN) for these processes (Theorem \ref{thm-FLLN}), where the limits are a set of Volterra-type integral equations. 
We then derive a PDE model (Theorem \ref{thm-PDE}) from the limit of the proportions of infected individuals tracking the infection ages distribution, together with the other limits. 
We show that the PDE model is characterized by a system of linear equations, with a boundary condition also given by a set of Volterra-type integral equations.  The PDE model is derived under the assumption that the distribution of the infectious duration is absolutely continuous; however, we also discuss the more general case in Remark \ref{GeneralF}.

Since the seminal work in \cite{KM32}, a few articles have used PDE models to describe epidemic dynamics with infection-age dependent infectivity. See, for example,  \cite{inaba2001kermack, inaba2004mathematical, ZhangPeng2007,magal2013two,BCF-2019} and references therein. They all use the  hazard rate function of the infection durations as a way to model the dependence upon the infection ages. For many scenarios, constructing the PDE models directly using the hazard rate functions is feasible, and sometimes, it is a very convenient method. However, for the multi-patch model with migration as we consider in this paper, it seems difficult to directly construct the PDE model using hazard rate functions to describe the dependence on the infection ages together with the migration dynamics.   
It is then important that we start with an individual-based stochastic model and then derive the PDE models as the scaling limits of the stochastic models. As a consequence, we find that the PDE model also uses hazard rate function of the infectious duration  (see the PDE equation in \eqref{eqn-PDE}).

To prove the FLLN, we employ the weak convergence criterion for stochastic processes taking values in the $\bD_\bD$ space, see Theorem \ref{thm-DD-conv0} (established in \cite{PP-2021}). The proof for the multi-patch model relies on an important  observation that the process tracking how long individuals have been infected has an integral representation (Lemma \ref{lem-mfI-rep} and \eqref{eqn-bar-mfI-N}). The convergence criterion is used for three components in the integral representation (Lemmas \ref{lem-barM-conv}, \ref{lem-bar-mfI0-conv} and \ref{lem-bar-mfI1-conv}), together with properties of stochastic integrals with respect to the associated Poisson random measures. 

\medskip

\paragraph{\bf Organization of the paper}
The rest of the paper is organized as follows. In Section \ref{sec-model}, we describe the model in detail, and state the FLLN result. In Section \ref{sec-PDE}, we state the PDE model and its derivation, and prove the existence and uniqueness of its solution. The proof for the FLLN is given in Section \ref{sec-proof-FLLN}. In the Appendix, we recall two results on the weak convergence of stochastic processes used in the proof.

\section{The Model and FLLN} \label{sec-model}

We consider a multi-patch epidemic model with infection-age dependent infectivity described as follows. Individuals in each patch belong to either of the susceptible, infected or recovered/removed compartments, and may migrate from one patch to another in any of the three compartments. Each individual is associated with a random infectivity function,  which depends on the age  of infection (elapsed time since infection). 

Let $N$ be the total population size and $L$ be the number of patches.
For each $\ell \in \sL:=\{1,\dots,L\}$, let $S^N_\ell(t)$, $I^N_\ell(t)$ and $R^N_\ell(t)$ denote the numbers of individuals in patch $\ell$ that are susceptible, infectious and recovered at time $t$, respectively. 
Then we have the balance equation:
$$
N\;=\; \sum_{\ell=1}^L \big(S^N_\ell(t) + I^N_\ell(t) + R^N_\ell(t)\big) \,, \quad t \ge 0\,.
$$
Assume that $S^N_\ell(0)>0$,  $\sum_{\ell=1}^L I^N_\ell(0)>0$ and $R^N_\ell(0)\ge 0$,  $\ell \in \sL$. 
Let $B^N_\ell(t) = S^N_\ell(t) + I^N_\ell(t) + R^N_\ell(t)$ for $t\ge 0$ and each $\ell$. 
In addition, let $\mfI^N_\ell(t,\mfa)$ be the number of infected individuals in patch $\ell$ at time $t$ that have been infected for less than or equal to $\mfa$. Note that the initially infected individuals may or may not have recovered by time $0$. If recovered, they will be counted in  $R^N_\ell(0)$ and otherwise, in $I^N_\ell (0)$, for some $\ell\in \sL$. The distribution of individuals in  $I^N_\ell (0)$ according to their infection ages is given by $\mfI^N_\ell(0,\mfa)$. 
Also, let $A^N_\ell(t)$ be the number of newly infected individuals in patch $\ell$ by time $t$ after time 0. 
 
For each individual $i$ that becomes infected in patch $\ell$, let $\lambda_{i}^{\ell}: \RR_+\to \RR_+$ be the associated random infectivity function. Similarly, for each individual $j$ that is infected in patch $\ell$ at time zero, let $\lambda_{j}^{0, \ell}(t)$ be the associated infectivity function. 
Assume that the random functions $\{\lambda_{i}^{\ell}(\cdot)\}_{i, \ell}$ and $\{\lambda_{j}^{0,\ell}(\cdot)\}_{j,\ell}$  are  independent and have the same law. This is reasonable since we model the same disease. 
We write $\lambda(t)$  as the generic random functions for these sequences. 
Associated with the infectivity functions, we let $\eta^{\ell}_{i} = \sup\{t>0: \lambda_{i}^{\ell} (t)>0\}$ for $i \in \NN$ and  $\eta^{\ell}_{j} = \sup\{t>0: \lambda_{j}^{0,\ell} (t)>0\}$ for $j=1,\dots, I^N_\ell(0)$. By the i.i.d. assumption on $\{\lambda_{i}^{\ell}(\cdot)\}_{i, \ell}$ and $\{\lambda_{j}^{0,\ell}(\cdot)\}_{j,\ell}$, the variables $\{\eta^{\ell}_{i}\}_{i,\ell}$ and $\{\eta^{\ell}_{j}\}_{j,\ell}$ are also i.i.d., and we let $F$ be the associated c.d.f. and denote $F^c=1-F$. 

Let $\{\tau_{i}^{\ell,N}, i \in \NN\}$ be the event times associated with the infection process $A^N_\ell(t)$. Assume that 
$0 < \tau_{1}^{\ell,N}< \tau_{2}^{\ell,N}<\cdots$ so that $A^N_\ell(t)= \max\{i\ge 1:  \tau_{i}^{\ell,N} \le t\}$ with $A^N_\ell(0)=0$.
For the initially infected individuals in patch $\ell$, we let $\{\tau_{j,0}^{\ell,N}, j =1,\dots, I^N_\ell(0)\}$ be the times at which the initially infected individuals at time 0 became infected. Note that we label the initially infected individuals by the patch where they are at time $0$, irrespective of where they have been infected. We do not follow the movements of the individuals before time $0$. 
Then $\tilde{\tau}_{j,0}^{\ell,N} = -\tau_{j,0}^{\ell,N}$, $ j =1,\dots, I_\ell^N(0)$, represent the amount of time that an initially infected individual has been infected by time $0$, that is, the age of infection at time $0$. 
WLOG, assume that $0 > \tau_{1,0}^{\ell,N}> \tau_{2,0}^{\ell,N}> \cdots >\tau_{I^N_\ell(0),0}^{\ell,N}$ (or equivalently 
$0 < \tilde{\tau}_{1,0}^{\ell,N}< \tilde{\tau}_{2,0}^{\ell,N}< \cdots < \tilde{\tau}_{I^N_\ell(0),0}^{\ell,N}$).  


Moreover, we assume that the infection times $\{\tau_{i}^{\ell,N}, i \in \NN\}$ are independent of the random infectivity functions $\{\lambda_{i}^{\ell}(\cdot)\}_{i, \ell}$, and similarly,   $\{\tau_{j,0}^{\ell,N}, j =1,\dots, I^N_\ell(0)\}$ are also independent of  $\{\lambda_{j}^{0,\ell}(\cdot)\}_{j,\ell}$. 
If $\eta^\ell_j\le \tilde{\tau}_{j,0}^{\ell,N}$, then the individual $j$ has recovered by time 0 and belongs to $R^N_\ell(0)$. On the other hand, if  $\eta^\ell_j > \tilde{\tau}_{j,0}^{\ell,N}$, then the individual $j$ is still infected at time $0$ and belongs to $I^N_\ell(0)$, and we let $\eta^{0,\ell}_{j} = \sup\big\{t>0: \lambda_{j}^{0,\ell} (\tilde{\tau}_{j,0}^{\ell,N}+t)>0\big\}$ be the  remaining infected period for the individual. The conditional distribution of $\eta^{0,\ell}_{j}$ given that $\eta^\ell_j > \tilde{\tau}_{j,0}^{\ell,N}=s>0$ is 
  \[
\P\big(\eta^{0,\ell}_{j} >t \,\big|\, \eta^{\ell}_j >\tilde{\tau}_{j,0}^{\ell,N}=s\big) = \P\big(\eta^{\ell}_{j}-\tilde{\tau}_{j,0}^{\ell,N} >t \,\big|\, \eta^{\ell}_j >\tilde{\tau}_{j,0}^{\ell,N}=s\big) = \frac{F^c(t+s)}{F^c(s)}, \quad \text{for}\quad t, s>0. 
\]
For notational convenience, we let $F_0(t|s):=1- \frac{F^c(t+s)}{F^c(s)}$. 
In addition, define $\mfI_{\ell}^N(0,\mfa) = \max\{1 \le j \le I^N_\ell(0): \tilde{\tau}_{j,0}^{\ell,N} \le \mfa\}$, which represents the number of initially infected individuals in patch $\ell$ that remain infected and have been infected for less or equal to $\mfa$ at time 0. Evidently, $\mfI_{\ell}^N(0,0)=0$ and $I^N_\ell(0) = \mfI^{N}_\ell(0, +\infty)$ a.s. for all $\ell\in \sL$.


Individuals may migrate from one patch to another in any of the S-I-R stages. Assume that 
the migration rates depend on the patch and the stage of the epidemic (S-I-R), that is, an individual in patch $\ell$ in stage $S$ (resp. stages I and R), migrates from patch $\ell$ to $\ell'$ with rates $\nu^{S}_{\ell, \ell'}$ (resp. $\nu^{I}_{\ell, \ell'}$ and $\nu^{R}_{\ell, \ell'}$). 
Note that for each $\ell \in \sL$, $\nu^{S}_{\ell, \ell}= - \sum_{\ell': \ell'\neq \ell} \nu^{S}_{\ell, \ell'}$, and similarly, for
$\nu^{I}_{\ell, \ell'}$ and $\nu^{R}_{\ell, \ell'}$.  
In order to follow the movement of the infected individuals, let $X_{i}^{\ell}(t)$ be the patch at time $t$ of the $i$-th newly infected individual that becomes infected in patch $\ell$.
 Then $X_{i}^{\ell}(t)$ is a Markov process associated with rates $\nu^{I}_{\ell, \ell'}$, $\ell' \in \sL$, and let $p_{\ell,\ell'}(t) = \P(X_{i}^{\ell}(t) = \ell'| X_{i}^{\ell}(0)= \ell)$ for $\ell, \ell' \in \sL$ and $t\ge 0$. 
 Let $X_{j}^{0,\ell}(t)$ be the patch at time $t$ of the $j$-th initially infected individual that was in patch $\ell$ at time 0. Assume that  $\{X_{j}^{0,\ell}(t)\}_{j, \ell}$ has the same transition probability functions $p_{\ell,\ell'}(\cdot) $ as $\{X_{i}^{\ell}(t)\}_{i, \ell}$ for each $\ell \in \sL$. 

The aggregate infectivity in patch $\ell$ at time $t$ is given by 
\begin{equation} \label{eqn-mfF}
\mfF^N_\ell(t) = \sum_{\ell'=1}^L \sum_{j=1}^{I^N_{\ell'}(0)} \lambda^{0,\ell'}_j(\tilde{\tau}_{j,0}^{\ell',N}+t) {\bf1}_{X^{0, \ell'}_j(t) = \ell} +  \sum_{\ell'=1}^L\sum_{i=1}^{A^N_{\ell'}(t)} \lambda_i^{\ell'}(t-\tau^{\ell',N}_{i}) {\bf1}_{X^{ \ell'}_i(t-\tau^{\ell',N}_{i}) = \ell}\,. 
\end{equation}

We consider the following instantaneous infection rate function: 
\begin{equation} \label{eqn-Upsilon} 
\Upsilon^N_\ell(t) = \frac{S_\ell^N(t) \sum_{\ell'=1}^L \beta_{\ell \ell'}\mfF^N_{\ell'}(t) }{N^{1-\gamma} (B_\ell^N(t))^\gamma} = \left(\frac{B_\ell^N(t)}{N}\right)^{1-\gamma} \frac{S_\ell^N(t)}{B_\ell^N(t)} \sum_{\ell'=1}^L \beta_{\ell \ell'}\mfF^N_{\ell'}(t)\,,
\end{equation} 
where the constants $\beta_{\ell \ell'}>0$ represent the impact from patch $\ell'$ upon patch $\ell$, and $\gamma \in [0,1]$.  Assume that $\beta_{\ell \ell'} \le \beta^*<\infty$. 
When $\gamma=0$,  an infected individual in patch $\ell'$ encounters a susceptible individual from patch $\ell$ with a rate $S_\ell^N(t)/N$, and 
when $\gamma=1$, that rate is equal to $\frac{S_\ell^N(t)}{B_\ell^N(t)}$. When $\gamma \in (0,1)$, the infection rate is a mix of the two extreme scenarios.   We also refer the readers to \cite{PP-2020b,FPP-2022} for further discussions on such infection rate functions. 
Then we can write the counting process of newly infected individuals at patch $\ell$ as 
\begin{equation} \label{eqn-An}
A^N_\ell(t) = \int_0^t \int_0^\infty {\bf1}_{u \le \Upsilon^N_\ell(s^-) } Q_\ell(ds, du),
\end{equation} 
where $Q_\ell$'s are mutually independent standard Poisson random measures on $\RR_+^2$. 
 
The number of susceptible individuals  in patch $\ell$ at each time $t$ can be represented by
\begin{equation} \label{eqn-Sn} 
S^N_\ell(t) = S^N_\ell(0) - A^N_\ell(t) - \sum_{\ell'=1}^L P^S_{\ell,\ell'}\left( \nu^{S}_{\ell, \ell'} \int_0^t S^N_{\ell}(s) ds\right)  + \sum_{\ell'=1}^L P^S_{\ell',\ell}\left( \nu^{S}_{\ell', \ell} \int_0^t S^N_{\ell'}(s) ds\right)\,,
\end{equation}
where $P^S_{\ell,\ell'}$'s are mutually independent rate-1 Poisson processes. 

The number of infected individuals in patch $\ell$ at each $t$ that have been infected for less than or equal to $\mfa$ can be represented by 
\begin{align} \label{eqn-mfI-N}
\mfI^N_\ell(t, \mfa)  &  = \sum_{\ell'=1}^L \sum_{j=1}^{I^N_{\ell'}(0)} {\bf1}_{\eta^{0,\ell'}_j >t} {\bf1}_{\tilde{\tau}_{j,0}^{\ell',N} \le (\mfa-t)^+} {\bf1}_{X_j^{0, \ell'}(t) = \ell} \non  \\
& \quad +  \sum_{\ell'=1}^L \sum_{i=A^N_{\ell'}((t-\mfa)^+)+1}^{A^N_{\ell'}(t)} {\bf1}_{\tau_{i}^{\ell',N}+\eta^{\ell'}_i >t}  {\bf1}_{X_i^{\ell'}(t-\tau_{i}^{\ell',N}) = \ell}\,. 
\end{align}
The number of infected individuals in patch $\ell$ at each $t$ is then equal to
\[
I^N_\ell(t) = \mfI^N_\ell(t, \infty).   
\]
The number of recovered/removed individuals  in patch $\ell$ at each time $t$ can be represented by
\begin{align} \label{eqn-Rn} 
R^N_\ell(t)
& =  R^N_\ell(0)+  \sum_{\ell'=1}^L \sum_{j=1}^{I^N_{\ell'}(0)} {\bf1}_{\eta^{0,\ell'}_j \le t}{\bf1}_{X_j^{0, \ell'}(\eta^{0,\ell'}_j) = \ell}   +  \sum_{\ell'=1}^L \sum_{i=1}^{A^N_{\ell'}(t)} {\bf1}_{\tau_{i}^{\ell',N}+\eta^{\ell'}_i \le t}  {\bf1}_{X_i^{\ell'}(\eta^{\ell'}_i) = \ell} \non \\
& \quad - \sum_{\ell'=1}^L P^R_{\ell,\ell'}\left( \nu^{R}_{\ell, \ell'} \int_0^t R^N_{\ell}(s) ds\right)  + \sum_{\ell'=1}^L P^R_{\ell',\ell}\left( \nu^{R}_{\ell', \ell} \int_0^t R^N_{\ell'}(s) ds\right)\,,
\end{align} 
where $P^R_{\ell,\ell'}$'s are mutually independent rate-1 Poisson processes, independent of 
$P^S_{\ell,\ell'}$'s. 

It is clear that the four processes $S^N_\ell, \mfF^N_\ell, \mfI^N_\ell, R^N_\ell$ describe the epidemic evolution dynamics of our model. 
We provide an alternative representation of $\mfI^N_\ell(t,\mfa)$ in the following lemma. 

\begin{lemma} \label{lem-mfI-rep} 
We have
\begin{align} \label{eqn-mfI-N-new} 
\mfI^N_\ell(t, \mfa) 
&  = \mfI^N_{\ell}(0,(\mfa-t)^+ ) - \sum_{\ell'=1}^L  \sum_{j=1}^{\mfI^N_{\ell'}(0,(\mfa-t)^+ )} {\bf1}_{\eta^{0,\ell'}_j \le t} {\bf1}_{X_j^{0, \ell'}(\eta^{0,\ell'}_j ) = \ell}  \non\\
& \quad   + A^N_{\ell}(t) - A^N_{\ell}((t-\mfa)^+)   - \sum_{\ell'=1}^L \sum_{i=A^N_{\ell'}((t-\mfa)^+)+1}^{A^N_{\ell'}(t)} {\bf1}_{\tau_{i}^{\ell',N}+\eta^{\ell'}_i \le t}  {\bf1}_{X_i^{\ell'}(\eta^{\ell'}_i) = \ell}  \non \\
& \quad  -   \sum_{\ell'=1}^L    \int_{(t-\mfa)^+}^t\int_0^\infty{\bf1}_{u\le \nu^{I}_{\ell, \ell'}\mfI^N_{\ell}(s, \mfa-(t-s))}Q^I_{\ell,\ell'}(ds,du)   \non \\
& \quad  +  \sum_{\ell'=1}^L   \int_{(t-\mfa)^+}^t\int_0^\infty{\bf1}_{u\le \nu^{I}_{\ell', \ell}\mfI^N_{\ell'}(s, \mfa- (t-s))}Q^I_{\ell',\ell}(ds,du) \,, 
\end{align}
where $Q^{I}_{\ell,\ell'}(ds,du)$, $\ell,\ell'\in \sL$,  are mutually independent standard PRMs on $\RR^2_+$, independent of $P^S_{\ell,\ell'}$ and $P^R_{\ell,\ell'}$, $\ell,\ell'\in \sL$. 
\end{lemma}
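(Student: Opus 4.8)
Identity \eqref{eqn-mfI-N-new} is a pathwise flow--balance (bookkeeping) identity: the infected individuals sitting in patch $\ell$ at time $t$ with infection age at most $\mfa$ are exactly those that were \emph{relevant} (i.e.\ will still have age at most $\mfa$ at time $t$ if not yet recovered) and located in patch $\ell$ at the instant they entered the infected compartment, minus those that have since recovered (counted at the patch where recovery occurred), minus the net migration out of patch $\ell$ of such individuals while they were still infected. The plan is: (i) realize the infected--migration dynamics by a family of mutually independent standard Poisson random measures $Q^{I}_{\ell,\ell'}$ on $\RR^2_+$, taken independent of the $Q_\ell$'s, the $P^S_{\ell,\ell'}$'s, the $P^R_{\ell,\ell'}$'s and the infectivity functions and durations; (ii) establish, for a single infected individual, a telescoping identity for the indicator ``still infected at time $t$ and located in patch $\ell$ at time $t$''; (iii) sum this identity over the individuals contributing to \eqref{eqn-mfI-N} and match it to the right-hand side of \eqref{eqn-mfI-N-new}.

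For step (ii), take a post-$0$ infected individual $i$ that enters the infected compartment in patch $\ell'$ at time $\tau^{\ell',N}_i$, and look at $s\mapsto {\bf1}_{\tau^{\ell',N}_i+\eta^{\ell'}_i>s}{\bf1}_{X^{\ell'}_i(s-\tau^{\ell',N}_i)=\ell}$ on $[\tau^{\ell',N}_i,t]$: it starts at ${\bf1}_{\ell'=\ell}$, decreases by ${\bf1}_{X^{\ell'}_i(\eta^{\ell'}_i)=\ell}$ at recovery if recovery occurs by $t$, jumps up by $1$ at each migration into patch $\ell$ occurring while still infected, and down by $1$ at each migration out of $\ell$ while still infected; hence its value at $t$ equals ${\bf1}_{\ell'=\ell}-{\bf1}_{\tau^{\ell',N}_i+\eta^{\ell'}_i\le t}{\bf1}_{X^{\ell'}_i(\eta^{\ell'}_i)=\ell}$ plus the net number of such migrations into $\ell$. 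The same identity holds for an initially infected individual $j$ located in patch $\ell'$ at time $0$, with recovery term ${\bf1}_{\eta^{0,\ell'}_j\le t}{\bf1}_{X^{0,\ell'}_j(\eta^{0,\ell'}_j)=\ell}$. Substituting these into the two sums of \eqref{eqn-mfI-N} --- whose $i$--sum ranges exactly over the post-$0$ individuals with infection time in $((t-\mfa)^+,t]$ and whose $j$--sum ranges over the initially infected with $\tilde\tau_{j,0}^{\ell',N}\le(\mfa-t)^+$, that is $j\le\mfI^N_{\ell'}(0,(\mfa-t)^+)$ --- the ``starting location'' contributions add up to $\mfI^N_\ell(0,(\mfa-t)^+)+A^N_\ell(t)-A^N_\ell((t-\mfa)^+)$ and the recovery contributions to the two recovery sums on the first two lines of \eqref{eqn-mfI-N-new}.

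It remains to identify the aggregate migration terms, and this is where the construction of the $Q^{I}_{\ell,\ell'}$ enters. The key observation is that a post-$0$ infected individual has infection age at most $\mfa$ at time $t$ iff its infection time exceeds $t-\mfa$, and an initially infected individual has infection age at most $\mfa$ at time $t$ iff its (pre-$0$) infection time exceeds $t-\mfa$; hence, for every pair $(t,\mfa)$, the individuals relevant at time $t$ that are present and infected in patch $\ell$ at any time $s\le t$ are precisely those whose infection time exceeds $t-\mfa$, and, when the infected individuals of patch $\ell$ are ordered by decreasing infection time, they occupy the initial segment of length $\mfI^N_\ell(s,\mfa-(t-s))$ (which vanishes for $s\le(t-\mfa)^+$, consistently with the lower limit of integration). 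Therefore, driving the infected migration graphically by the $Q^{I}_{\ell,\ell'}$ --- at a point $(s,u)$ of $Q^{I}_{\ell,\ell'}$ with $u\le\nu^{I}_{\ell,\ell'}I^N_\ell(s^-)$ one moves to $\ell'$ the $\lceil u/\nu^{I}_{\ell,\ell'}\rceil$--th infected individual of patch $\ell$ in this order --- each $X^{\ell'}_i(\cdot)$ and each $X^{0,\ell'}_j(\cdot)$ is a rate-$\nu^{I}$ Markov chain with the joint law prescribed in the model, and the number of migrations $\ell\to\ell'$ of relevant, still infected individuals up to time $t$ equals $\int_{(t-\mfa)^+}^t\int_0^\infty{\bf1}_{u\le\nu^{I}_{\ell,\ell'}\mfI^N_\ell(s^-,\mfa-(t-s))}Q^{I}_{\ell,\ell'}(ds,du)$, and likewise the inflow into $\ell$ against $Q^{I}_{\ell',\ell}$. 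Summing over $\ell'$ and combining with the previous paragraph gives \eqref{eqn-mfI-N-new}; the required independence from $Q_\ell$, $P^S_{\ell,\ell'}$ and $P^R_{\ell,\ell'}$ is built into the choice of the $Q^{I}_{\ell,\ell'}$.

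I expect the main obstacle to be exactly this last step: arranging that a single family $Q^{I}_{\ell,\ell'}$ carries, simultaneously for all $(t,\mfa)$, both the migration dynamics and the selection of which migrants are ``relevant''. The decreasing--infection--time ordering is what makes this possible, since it converts ``relevant at $t$'' into an initial--segment condition with the right count $\mfI^N_\ell(s,\mfa-(t-s))$; alternatively, one may keep the chains $X^{\ell'}_i$ and $X^{0,\ell'}_j$ as given and merely represent, for each ordered pair $(\ell,\ell')$, the marked point process (migration time, rank in this ordering) by such a $Q^{I}_{\ell,\ell'}$, which again yields the stated mutual independence.
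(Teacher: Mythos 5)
Your proof is correct and takes essentially the same route as the paper's: both are pathwise bookkeeping arguments that (a) rewrite each ``still infected and in patch $\ell$'' indicator as ``started in $\ell$'' minus ``recovered in $\ell$ by $t$'' plus the net number of migrations into $\ell$ while still infected, and (b) identify the aggregated net--migration count with the difference of the two Poisson-stochastic integrals. Where the paper introduces the intermediate net--migration tallies $Y^{N,0}_{\ell',\ell},Y^{N}_{\ell',\ell}$ and then \emph{asserts} (with a one-sentence verbal interpretation) that their alternating sum equals the double integrals against $Q^I_{\ell,\ell'}$, you spell out the missing content of that assertion: the initial--segment argument via the decreasing--infection--time ordering, which turns ``relevant for $(t,\mfa)$ at intermediate time $s$'' into the threshold $\mfI^N_\ell(s,\mfa-(t-s))$ and delivers the correct joint PRM representation. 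So yours is not a different proof, but it does make explicit the coupling between the chains $X^{\ell'}_i, X^{0,\ell'}_j$ and the PRMs $Q^I_{\ell,\ell'}$, which the paper treats as given.
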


\begin{proof}
Recall the expression of $\mfI^N_\ell(t, \mfa) $ in \eqref{eqn-mfI-N}.
For the first term, 
we have in the summation over $\ell'$,  if $\ell'=\ell$, 
\begin{align*}
   \sum_{j=1}^{\mfI^N_{\ell}(0,(\mfa-t)^+ )} {\bf1}_{\eta^{0,\ell}_j >t} {\bf1}_{X_j^{0, \ell}(t) = \ell}  
&=\mfI^N_{\ell}(0,(\mfa-t)^+ ) -    \sum_{j=1}^{\mfI^N_{\ell}(0,(\mfa-t)^+ )} {\bf1}_{\eta^{0,\ell}_j \le t} {\bf1}_{X_j^{0, \ell}(\eta^{0,\ell}_j ) = \ell} - \sum_{\ell': \ell'\neq \ell} Y^{N,0}_{\ell, \ell'}(t,\mfa) \,,
\end{align*}
and  if $\ell'\neq \ell$,
\begin{align*}
\sum_{j=1}^{\mfI^N_{\ell'}(0,(\mfa-t)^+ )} {\bf1}_{\eta^{0,\ell'}_j >t} {\bf1}_{X_j^{0, \ell'}(t) = \ell}  
&=  Y^{N,0}_{\ell', \ell}(t,\mfa) - \sum_{j=1}^{\mfI^N_{\ell'}(0,(\mfa-t)^+ )} {\bf1}_{\eta^{0,\ell'}_j \le t} {\bf1}_{X_j^{0, \ell'}(\eta^{0,\ell'}_j ) = \ell}\,,
\end{align*}
where  $Y^{N,0}_{\ell', \ell}(t,\mfa)$ is the number of the initially infected individuals in patch $\ell'$ that are in patch $\ell$ at time $t \wedge \eta^{0,\ell'}_j$, for $j=1,\dots, \mfI^N_{\ell'}(0,(\mfa-t)^+ )$. 

Next, for the second term, 
we have in the summation, if $\ell'=\ell$, 
\begin{align*}
&  \sum_{i=A^N_{\ell}((t-\mfa)^+)+1}^{A^N_{\ell}(t)} {\bf1}_{\tau_{i}^{\ell,N}+\eta^{\ell}_i >t}  {\bf1}_{X_i^{\ell}(t-\tau_{i}^{\ell,N}) = \ell}\\
&  = A^N_{\ell}(t) - A^N_{\ell}((t-\mfa)^+) -  \sum_{i=A^N_{\ell}((t-\mfa)^+)+1}^{A^N_{\ell}(t)} {\bf1}_{\tau_{i}^{\ell,N}+\eta^{\ell}_i \le t}  {\bf1}_{X_i^{\ell}(\eta^{\ell}_i) = \ell} -  \sum_{\ell': \ell'\neq \ell} Y^{N}_{\ell, \ell'}(t,\mfa)\,,
\end{align*}
and if $\ell'\neq \ell$, 
\begin{align*}
&  \sum_{i=A^N_{\ell'}((t-\mfa)^+)+1}^{A^N_{\ell'}(t)} {\bf1}_{\tau_{i}^{\ell',N}+\eta^{\ell'}_i >t}  {\bf1}_{X_i^{\ell'}(t-\tau_{i}^{\ell',N}) = \ell}  = Y^{N}_{\ell', \ell}(t,\mfa) - \sum_{i=A^N_{\ell'}((t-\mfa)^+)+1}^{A^N_{\ell'}(t)} {\bf1}_{\tau_{i}^{\ell',N}+\eta^{\ell'}_i \le t}  {\bf1}_{X_i^{\ell'}(\eta^{\ell'}_i) = \ell}\,,
\end{align*}
where  $Y^{N}_{\ell', \ell}(t,\mfa)$ is the number of the newly infected individuals in patch $\ell'$  that are in patch $\ell$ at time $t \wedge (\tau_{i}^{\ell',N}+\eta^{\ell'}_i)$, for $i=A^N_{\ell'}((t-\mfa)^+)+1,\dots, A^N_{\ell'}(t)$. 

We then observe that
\begin{align*}
 &  \sum_{\ell'=1}^L  \left(Y^{N,0}_{\ell', \ell}(t,\mfa) + Y^{N}_{\ell', \ell}(t,\mfa) - Y^{N,0}_{\ell, \ell'}(t,\mfa) - Y^{N}_{\ell, \ell'}(t,\mfa)\right)  \\
 & =    \sum_{\ell'=1}^L \bigg(  \int_{(t-\mfa)^+}^t\int_0^\infty{\bf1}_{u\le \nu^{I}_{\ell', \ell}\mfI^N_{\ell'}(s, \mfa- (t-s))}Q^I_{\ell',\ell}(ds,du)-  \int_{(t-\mfa)^+}^t\int_0^\infty{\bf1}_{u\le \nu^{I}_{\ell, \ell'}\mfI^N_{\ell}(s, \mfa-(t-s))}Q^I_{\ell,\ell'}(ds,du) \bigg)\,. 
\end{align*} 
The interpretation of the identity is as follows. The left hand side counts the total number of individuals (initially and newly infected) that have migrated from all patches $\ell'$ into patch $\ell$, minus those out of patch $\ell$, but only the individuals with an infection age less than or equal to $(t-\mfa)^+$ at time $t$,  or recovered by time $t$. The right hand side represents the same counts by using the processes $\mfI^N_{\ell}(t, \mfa)$, but noting the $\mfI^N_{\ell}(s,\mfa-(t-s))$ inside the integral as the infection age evolves with $s$ changes from $(t-\mfa)^+$ to $t$.

Combining the above, we obtain the representation of $\mfI^N_\ell(t, \mfa) $ in the lemma. 
\end{proof} 

Throughout the paper, let $\bD=\bD(\RR_+;\RR)$ denote the space of $\RR$--valued c{\`a}dl{\`a}g functions defined on $\RR_+$.  Convergence in $\bD$ means convergence in the  Skorohod $J_1$ topology, see Chapter 3 of \cite{billingsley1999convergence}. 
 Also, $\bD^k$ stands for the $k$-fold product equipped with the product topology. 
 Let $\bC$ be the subset of $\bD$ consisting of continuous functions.   Let $\bD_\bD= \bD(\RR_+; \bD(\RR_+;\RR))$ be the $\bD$-valued $\bD$ space. In particular, the processes $\mfI_\ell^N(t,\mfa)$ have sample paths in $\bD_\bD$. 
 
 We define the LLN scaled processes $\bar{Z}^N= N^{-1} Z^N$ for any process $Z$. We first impose the following conditions on the initial quantities.

\begin{assumption}\label{AS-initial}
There exist deterministic continuous nondecreasing functions $\bar\mfI_\ell(0,\cdot)$ on $\R_+$ with $\bar\mfI_\ell(0,0)=0$ and constants  $\bar{S}_\ell(0), \bar{R}_\ell(0)\in [0,1]$,   $\ell \in \sL$,  such that 
\[(\bar{S}^N_\ell(0), \bar\mfI^N_\ell(0,\cdot), \bar{R}^N_\ell(0))_{  \ell \in \sL} \to (\bar{S}_\ell(0), \bar\mfI_\ell(0,\cdot), \bar{R}_\ell(0))_ {\ell \in \sL} \qinq \R_+^L \times \bD^L \times \R_+^L  \] 
 in probability as $N\to \infty$. Let $\bar{I}_\ell(0)= \bar\mfI_\ell(0,+\infty)$ for each $\ell \in \sL$. Then 
the convergence implies that $(\bar{I}^N_\ell(0), \ell \in\sL) \to (\bar{I}_\ell(0), \ell \in\sL) $ in $\R_+^L$  in probability as $N\to \infty$.  In addition, assume that $\sum_{\ell \in \sL} \bar{I}_\ell(0) >0$, $\sum_{\ell\in \sL}(\bar{S}_\ell(0)+ \bar{I}_\ell(0) + \bar{R}_\ell(0))=1$, and that the functions $\mfa\mapsto \bar\mfI_\ell(0,\mfa)$ satisfy the following assumptions: 
 there exist constants $C,\,\alpha>0$ such that $\bar\mfI_\ell(0,\mfa)-\bar\mfI(0,\mfa-\delta)\le C\delta^\alpha$ for all $1\le \ell\le L$, $\mfa>0$, $\delta>0$. 
\end{assumption}

We then impose the following conditions on the random infectivity functions. Recall that $\{\lambda_{j}^{0,\ell}(\cdot)\}_{j,\ell}$ and $\{\lambda_{i}^{\ell}(\cdot)\}_{i,\ell}$ have the same law.

\begin{assumption}\label{AS-lambda}
Let $\lambda(\cdot)$ be a process representing $\{\lambda_{j}^{0,\ell}(\cdot)\}_{j,\ell}$ and $\{\lambda_{i}^{\ell}(\cdot)\}_{i,\ell}$ with the same law. Assume that $\lambda(\cdot)\in \bD$, and  there exists a constant $\lambda^*$ such that  $\sup_{t \ge0} \lambda(t) \le \lambda^*$ a.s. 
Let $\bar{\lambda}(t) = \E[\lambda(t)]$ for $t\ge 0$.
\end{assumption} 

\begin{theorem} \label{thm-FLLN}
Under Assumptions \ref{AS-initial} and \ref{AS-lambda}, 
\begin{equation}
(\bar{S}^N_\ell, \bar{\mfF}^N_\ell, \bar{\mfI}^N_\ell, \bar{R}^N_\ell)_{\ell\in\sL} \to (\bar{S}_\ell, \bar{\mfF}_\ell, \bar{\mfI}_\ell, \bar{R}_\ell)_{\ell\in\sL}
\end{equation} 
in probability, locally uniformly in $t$ and $\mfa$ as $N\to\infty$, where the limits are the unique continuous solution to the following set of integral equations, for  $t, \mfa \ge 0$, 
\begin{align}
\bar{S}_\ell(t) &= \bar{S}_\ell(0) - \int_0^t \bar\Upsilon_\ell(s) ds + \sum_{\ell'=1}^L \int_0^t \Big( \nu^S_{\ell', \ell}\bar{S}_{\ell'}(s) - \nu^S_{\ell, \ell'}\bar{S}_{\ell}(s)  \Big) ds\,, \label{eqn-barS}\\
\bar{\mfF}_\ell(t) &= \sum_{\ell'=1}^L p_{\ell',\ell}(t) \int_0^{\infty} \bar{\lambda}(\mfa+t) \bar\mfI_{\ell'}(0,d\mfa) 
+  \sum_{\ell'=1}^L   \int_0^t \bar{\lambda}(t-s) p_{\ell',\ell}(t-s)  \bar\Upsilon_{\ell'}(s) ds\,, \label{eqn-bar-mfF}\\
\bar{\mfI}_\ell(t,\mfa) &= \bar\mfI_{\ell}(0,(\mfa-t)^+ )  -  \sum_{\ell'=1}^L  \int_0^{(\mfa-t)^+}  \bigg( \int_0^t p_{\ell',\ell}(u)    F_0(du|y) \bigg)    \bar\mfI_{\ell'}(0,dy)  \non \\
& \quad +\int_{(t-\mfa)^+}^t \bar\Upsilon_\ell(s) ds - \sum_{\ell'=1}^L \int_{(t-\mfa)^+}^t \int_0^{t-s} p_{\ell',\ell}(u) F(du)  \bar\Upsilon_{\ell'}(s) ds \label{eqn-bar-mfI} \\
& \quad + \sum_{\ell'=1}^L \int_{(t-\mfa)^+}^t \Big( \nu^I_{\ell', \ell} \bar\mfI_{\ell'}(s,\mfa-(t-s))  - \nu^I_{\ell, \ell'} \bar\mfI_{\ell}(s,\mfa-(t-s))   \Big) ds\,, \non \\
\bar{R}_\ell(t) &= \bar{R}_\ell(0) +  \sum_{\ell'=1}^L   \int_0^{\infty}  \bigg( \int_0^t p_{\ell',\ell}(u)    F_0(du|\mfa) \bigg)    \bar\mfI_{\ell'}(0,d\mfa) \non\\
& \quad + \sum_{\ell'=1}^L \int_{0}^t \int_0^{t-s} p_{\ell',\ell}(u) F(du)  \bar\Upsilon_{\ell'}(s) ds  + \sum_{\ell'=1}^L \int_0^t \Big( \nu^R_{\ell', \ell}\bar{R}_{\ell'}(s) - \nu^R_{\ell, \ell'}\bar{R}_{\ell}(s)  \Big) ds\,, \label{eqn-barR}
\end{align} 
and 
\begin{equation} \label{eqn-bar-Upsilon}
\bar\Upsilon_{\ell}(t)=\frac{\bar{S}_\ell(t)}{(\bar{B}_\ell(t))^\gamma} \sum_{\ell'=1}^L \beta_{\ell \ell'}\bar\mfF_{\ell'}(t), 
\end{equation}
where $\bar{B}_\ell = \bar{S}_\ell + \bar{I}_\ell + \bar{R}_\ell$ and $\bar{I}_\ell (t) = \bar{\mfI}_\ell(t,\infty)$. 
In addition, $(\bar{I}^N_\ell)_\ell \to (\bar{I}_\ell)_\ell$ in $\bD^L$ in probability where
\begin{align}
\bar{I}_\ell(t) &= \bar{I}_{\ell}(0)  -  \sum_{\ell'=1}^L   \int_0^{\infty}  \bigg( \int_0^t p_{\ell',\ell}(u)   F_0(du|\mfa)  \bigg)    \bar\mfI_{\ell'}(0,d\mfa)  +\int_{0}^t \bar\Upsilon_\ell(s) ds   \non \\
& \quad- \sum_{\ell'=1}^L \int_{0}^t \int_0^{t-s} p_{\ell',\ell}(u) F(du)  \bar\Upsilon_{\ell'}(s) ds +  \sum_{\ell'=1}^L \int_0^t \Big( \nu^I_{\ell', \ell}\bar{I}_{\ell'}(s) - \nu^I_{\ell, \ell'}\bar{I}_{\ell}(s)  \Big) ds\,.     \label{eqn-bar-I}    
\end{align} 
\end{theorem}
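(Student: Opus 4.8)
The plan is to carry out the four standard stages of a functional law of large numbers: \emph{(a)} a priori bounds and tightness, \emph{(b)} identification of subsequential limits from the stochastic representations, \emph{(c)} uniqueness for the limiting integral system, and \emph{(d)} the passage from $\bar{\mfI}^N_\ell$ to $\bar I^N_\ell$. For \emph{(a)}, observe that every $N^{-1}$-scaled counting process is bounded by $1$, so Assumption \ref{AS-lambda} yields $\bar{\mfF}^N_\ell(t)\le\lambda^*$ and, since $S^N_\ell\le B^N_\ell$ and $\gamma\le1$, $\bar\Upsilon^N_\ell(t)=\bar S^N_\ell(t)(\bar B^N_\ell(t))^{-\gamma}\sum_{\ell'}\beta_{\ell\ell'}\bar{\mfF}^N_{\ell'}(t)\le L\beta^*\lambda^*=:c_0$, uniformly in $t$ and $N$. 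Hence, with probability tending to $1$, $\bar A^N_\ell:=N^{-1}A^N_\ell$ is Lipschitz with constant $c_0+o(1)$; the compensators of the Poisson-driven migration terms in \eqref{eqn-Sn}, \eqref{eqn-Rn} and of the PRM terms in \eqref{eqn-An} and \eqref{eqn-mfI-N-new} are Lipschitz in $t$; and the associated compensated martingales vanish uniformly on compact time sets by Doob's maximal inequality together with the FLLN for Poisson processes and PRMs. This gives $C$-tightness of $(\bar S^N_\ell,\bar A^N_\ell,\bar R^N_\ell)_{\ell\in\sL}$ in $\bD^L$.

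The main obstacle is the convergence of the $\bD_\bD$-valued process $\mfI^N_\ell(t,\mfa)$, where one must control the regularity in $t$ and in $\mfa$ simultaneously while performing a law of large numbers over the random infectivity functions and the migration trajectories. I would base this on the integral representation of Lemma \ref{lem-mfI-rep}: the $N^{-1}$-scaled form of \eqref{eqn-mfI-N-new} writes $\bar{\mfI}^N_\ell(t,\mfa)$ as the sum of (i) a term built from the initial data $\bar{\mfI}^N_{\ell'}(0,\cdot)$ and the recovery of initially infected individuals, (ii) a term built from $\bar A^N_\ell$ and the recovery of newly infected individuals, and (iii) migration in/out terms driven by the PRMs $Q^I_{\ell,\ell'}$. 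Tightness in $\bD_\bD$ together with the limits of these three pieces then follow by verifying the hypotheses of the $\bD_\bD$-convergence criterion of Theorem \ref{thm-DD-conv0}; this is exactly what Lemmas \ref{lem-barM-conv}, \ref{lem-bar-mfI0-conv} and \ref{lem-bar-mfI1-conv} supply, the centered PRM component being shown to vanish uniformly on compacts. The H\"older-type bound on $\mfa\mapsto\bar{\mfI}_\ell(0,\mfa)$ imposed in Assumption \ref{AS-initial} is precisely what forces the $\mfa$-modulus of continuity of $\bar{\mfI}^N_\ell$ to be uniformly small in $N$.

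For \emph{(b)}, the probabilistic core is the law of large numbers over the i.i.d.\ infectivity functions $\{\lambda_i^{\ell}\}$, $\{\lambda_j^{0,\ell}\}$ and the i.i.d.\ Markov migration trajectories $\{X_i^{\ell}\}$, $\{X_j^{0,\ell}\}$, which are independent of the infection times. Working along a subsequence on which $(\bar S^N,\bar{\mfF}^N,\bar{\mfI}^N,\bar R^N,\bar A^N,\bar\Upsilon^N)$ converges jointly, I would condition on the $\sigma$-field generated by $\{\tau_i^{\ell,N}\}$ and $\{\tau_{j,0}^{\ell,N}\}$ and replace each summand in the scaled versions of \eqref{eqn-mfF}, \eqref{eqn-mfI-N-new} and \eqref{eqn-Rn} by its conditional mean, which turns $\mathbf 1_{X_i^{\ell'}(u)=\ell}$ into $p_{\ell',\ell}(u)$, the infectivity $\lambda_i^{\ell'}(u)$ into $\bar\lambda(u)$, and the recovery events into the appropriate integrals of $p_{\ell',\ell}$ against $F$ (for newly infected) or $F_0(\cdot|\mfa)$ (for initially infected); the fluctuations around these conditional means vanish uniformly on compacts by a second-moment estimate, uniformity in $t$ coming from the a priori Lipschitz bound on $\bar A^N_{\ell'}$. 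Combined with $\bar A^N_{\ell'}(ds)\to\bar\Upsilon_{\ell'}(s)\,ds$ and $\bar{\mfI}^N_{\ell'}(0,d\mfa)\to\bar{\mfI}_{\ell'}(0,d\mfa)$, this gives, for instance,
\[
\frac1N\sum_{i=1}^{A^N_{\ell'}(t)}\lambda_i^{\ell'}(t-\tau_i^{\ell',N})\,\mathbf 1_{X_i^{\ell'}(t-\tau_i^{\ell',N})=\ell}\;\longrightarrow\;\int_0^t\bar\lambda(t-s)\,p_{\ell',\ell}(t-s)\,\bar\Upsilon_{\ell'}(s)\,ds,
\]
and the analogous convergences for the remaining sums. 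Feeding these, together with the continuity of the map \eqref{eqn-bar-Upsilon} in the state (the ratio $\bar S_\ell/(\bar B_\ell)^\gamma$ being dominated by $(\bar B_\ell)^{1-\gamma}\le1$), into the scaled forms of \eqref{eqn-mfF}, \eqref{eqn-Sn}, \eqref{eqn-mfI-N-new} and \eqref{eqn-Rn}, one concludes that every joint subsequential limit solves \eqref{eqn-barS}--\eqref{eqn-bar-Upsilon}.

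For \emph{(c)}, I would establish uniqueness of the solution of \eqref{eqn-barS}--\eqref{eqn-bar-Upsilon} by a Gronwall/fixed-point estimate in the sup norm on $[0,T]\times[0,\infty)$: all the integral kernels appearing are uniformly bounded ($|\bar\lambda|\le\lambda^*$, $p_{\ell',\ell}\le1$, $F,F_0\le1$, migration rates finite), and, since $\bar B_\ell$ evolves only through bounded migration, it stays bounded away from $0$ on $[0,T]$ whenever the patch is initially nonempty, so that $\bar\Upsilon_\ell$ is locally Lipschitz in the state (the degenerate case of an initially empty patch, where $\bar S_\ell$ and hence $\bar\Upsilon_\ell$ vanish there, being handled separately). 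Uniqueness then upgrades the subsequential convergence to convergence of the full sequence; since the limit is deterministic, convergence in distribution is convergence in probability, locally uniformly in $(t,\mfa)$. Finally, $\bar I^N_\ell\to\bar I_\ell$ in $\bD^L$ follows by letting $\mfa\to\infty$ in $\bar{\mfI}^N_\ell(t,\mfa)$: for $t\le T$ and $\mfa\ge T$, the difference $\bar{\mfI}^N_\ell(t,\infty)-\bar{\mfI}^N_\ell(t,\mfa)$ counts only initially infected individuals that are still infected and whose age at time $0$ exceeds $\mfa-t$, hence is at most $\sum_{\ell'}\big(\bar{\mfI}^N_{\ell'}(0,\infty)-\bar{\mfI}^N_{\ell'}(0,\mfa-T)\big)\to\sum_{\ell'}\big(\bar I_{\ell'}(0)-\bar{\mfI}_{\ell'}(0,\mfa-T)\big)$, which tends to $0$ as $\mfa\to\infty$ by Assumption \ref{AS-initial}; passing to the limit $\mfa\to\infty$ in \eqref{eqn-bar-mfI} gives \eqref{eqn-bar-I}.
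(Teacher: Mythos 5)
Your plan follows the same broad architecture as the paper's proof: (a) a priori bounds and vanishing of the compensated martingales, giving tightness of the one-parameter processes; (b) LLN over the i.i.d.\ infectivities and migration trajectories via conditional expectations, which is how Lemmas~\ref{lem-bar-mfFn-conv}, \ref{lem-bar-mfI0-conv} and \ref{lem-bar-mfI1-conv} produce the kernels $p_{\ell',\ell}$, $\bar\lambda$, $F$ and $F_0(\cdot|\cdot)$; (c) uniqueness and upgrade to convergence of the full sequence; (d) $\mfa\to\infty$ for $\bar I^N_\ell$. Your uniqueness route (Gronwall, handling the degeneracy of $\bar B_\ell$ near zero by the lower bound $\bar B_\ell(t)\ge c_T$) is a workable alternative to the paper's appeal to classical Volterra-equation theory in Lemma~\ref{uniqBC}.

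There is, however, one genuine gap. You treat the scaled representation \eqref{eqn-bar-mfI-N} as if convergence of its ``external'' inputs ($\bar\mfI^N(0,\cdot)$, $\bar A^N$, $\bar\mfI^{N,0}$, $\bar\mfI^{N,1}$, $\bar M^N_{\mfI}$) automatically yields convergence of $\bar\mfI^N_\ell(t,\mfa)$, and you likewise assert tightness of $\bar\mfI^N$ in $\bD_\bD$ by pointing to Lemmas~\ref{lem-barM-conv}--\ref{lem-bar-mfI1-conv}. But those lemmas concern the external pieces only; in \eqref{eqn-bar-mfI-N} the unknown $\bar\mfI^N$ also occurs \emph{inside} the migration integrals $\int_{(t-\mfa)^+}^t\bar\mfI^N_{\ell'}(s,\mfa-(t-s))\,ds$, so the equation is implicit in $\bar\mfI^N$. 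Neither tightness nor convergence of $\bar\mfI^N$ can be read off directly; one has to first \emph{solve} this linear Volterra equation. This is precisely what Lemma~\ref{lem-mfI-map} does: along the characteristic $s\mapsto(s,\mfa-t+s)$ the equation becomes a linear ODE with generator $Q$, and Duhamel's formula yields the explicit representations \eqref{mfI} and \eqref{mfIN}, which express both $\bar\mfI$ and $\bar\mfI^N$ as the same continuous functional of the free inputs. Only then does the continuous mapping theorem deliver $\bar\mfI^N\Rightarrow\bar\mfI$ locally uniformly in $(t,\mfa)$. Without this step your stage (b) does not close, and your stage (a) tightness claim for $\bar\mfI^N$ is unsupported. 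Adding this Duhamel/matrix-exponential resolution, which also feeds naturally into your stage (d) and into the final identification of $\bar A_\ell(ds)=\bar\Upsilon_\ell(s)\,ds$, brings your proposal in line with the paper's argument.
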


\bigskip

\section{The PDE model}  \label{sec-PDE} 

In this section we present the PDE model that is derived from the limiting integral equations. 
We assume that the distribution function $F$ is absolutely continuous, with a density $f$. For the extension of the results of this section to general $F$, see Remark \ref{GeneralF} below. 
 Recall that $\{X^\ell_i(t)\}_{i, \ell}$ and $\{X^{0,\ell}_j(t)\}_{j, \ell}$ are the Markov processes representing the migrations of newly and initially infected individuals, and have the same law, with transition probability functions $p_{\ell,\ell'}(\cdot)$ and transition rates $\nu^I_{\ell, \ell'}$. For notational convenience, we use the Markov process $\{X(t): t\ge 0\}$ to represent a typical migration process in the infected compartment.  Let $Q = (Q_{\ell,\ell'})$ denote the infinitesimal generator of $X(t)$, that is, 
\begin{align*}
Q_{\ell,\ell'}=\begin{cases}
                     \nu^I_{\ell,\ell'},&\text{ if $\ell'\not=\ell$},\\
                     -\sum_{\ell'\not=\ell}\nu^I_{\ell,\ell'},&\text{ if $\ell'=\ell$}.
                     \end{cases}
                     \end{align*}
Then the transition probability function  satisfies $p_{\ell,\ell'}(t)=\left(e^{Qt}\right)_{\ell,\ell'}$.

In case $\bar{\mfI}_\ell(t,\mfa)$ is absolutely continuous $\mfa$, we write  $ \bar\mfi_\ell(t,\mfa)= \frac{\partial}{\partial \mfa}\bar{\mfI}_\ell(t,\mfa)$ and consider $ \bar\mfi(t,\mfa) = (\bar\mfi_\ell(t,\mfa))_{\ell\in \sL}$  as a row vector. 
Let $\mu(\mfa)=f(\mfa)/F^c(\mfa)$ be the hazard rate function of the law of $\eta$. 

\begin{theorem} \label{thm-PDE}
Suppose that $\bar\mfI_\ell(0, \cdot)$ is absolutely continuous with density $\bar\mfi_\ell(0,\cdot)$ for each $\ell \in \sL$, and that $F$ is absolutely continuous with density $f$. 
 Then  $\bar\mfI_\ell(t, \mfa)$ is absolutely continuous in $\mfa$ and  $\bar\mfi(t,\mfa)$ is the unique solution to the following PDE: 
 \begin{align} \label{eqn-PDE} 
\frac{\partial}{\partial t}\bar\mfi(t,\mfa)+\frac{\partial}{\partial \mfa}\bar\mfi(t,\mfa)
&= -  \mu(\mfa) \, \bar\mfi(t,\mfa)   +\bar\mfi(t,\mfa)Q\,,
\end{align}
 for  $(t,\mfa)$ in $(0,\infty)^2$, 
where the initial condition is given by  $\bar{\mfi}(0,\mfa)$, and the boundary condition reads 
\begin{align} \label{eqn-bar-mfi-BC}
\bar\mfi_{\ell}(t,0) &= \frac{\bar{S}_\ell(t)}{ (\bar{B}_\ell(t))^\gamma} \sum_{\ell'=1}^L \beta_{\ell,\ell'} 
\int_0^{\infty}\bar\lambda(\mfa)\, \bar\mfi_{\ell'}(t,\mfa) \frac{F^c((\mfa-t)^+)}{F^c(\mfa)} d\mfa \,, 
\end{align} 
with $\bar{B}_\ell(t) = \bar{S}_\ell(t)+ \bar{I}_\ell(t) + \bar{R}_\ell(t)$. 
Moreover, the  PDE has a unique solution: 
 for $t\le\mfa$, 
\begin{equation}\label{ident-IC}
\bar\mfi(t,\mfa)= \frac{F^c(\mfa)}{F^c(\mfa-t)} \, \bar\mfi(0,\mfa-t) e^{Qt}\,,
\end{equation}
and for $t>\mfa$, 
\begin{equation}\label{ident-BC}
\bar\mfi(t,\mfa)=F^c(\mfa)\,  \bar\mfi(t-\mfa,0) e^{Q\mfa}\,,
\end{equation}
where  the boundary condition   $\bar{\mfi}(t,0)$ is the first component of the unique solution to the following set of integral equations: 
\begin{align}
\bar\mfi_{\ell}(t,0) &= \frac{\bar{S}_\ell(t)}{ (\bar{B}_\ell(t))^\gamma} \sum_{\ell'=1}^L \beta_{\ell,\ell'} 
\bigg( \sum_{\ell''=1}^L p_{\ell'',\ell'}(t) \int_0^{\infty} \bar{\lambda}(\mfa+t) \, \bar\mfi_{\ell''}(0,\mfa)d\mfa 
 \label{eqn-bar-mfi-BC-1}\\
 & \qquad \qquad \qquad \qquad \quad +  \sum_{\ell''=1}^L   \int_0^t \bar{\lambda}(t-s) p_{\ell'',\ell'}(t-s)  \, \bar\mfi_{\ell''}(s,0) ds  \bigg)\,, 
\non\\
\bar{S}_\ell(t) &= \bar{S}_\ell(0) - \int_0^t  \bar\mfi_{\ell}(s,0) ds + \sum_{\ell'=1}^L \int_0^t \Big( \nu^S_{\ell', \ell}\bar{S}_{\ell'}(s) - \nu^S_{\ell, \ell'}\bar{S}_{\ell}(s)  \Big) ds\,, \label{eqn-bar-S-BC}\\ 
\bar{I}_\ell(t) &= \bar{I}_{\ell}(0)  -  \sum_{\ell'=1}^L \int_0^{\infty}    \bigg( \int_0^t p_{\ell',\ell}(u)   \frac{f(u+\mfa)}{F^c(\mfa)}du \bigg)    \, \bar\mfi_{\ell'}(0,\mfa)d\mfa  +\int_{0}^t \bar\mfi_{\ell}(s,0) ds   \non \\
& \quad- \sum_{\ell'=1}^L \int_{0}^t \int_0^{t-s} p_{\ell',\ell}(u) f(u)du\,\, \bar\mfi_{\ell'}(s,0)  ds +  \sum_{\ell'=1}^L \int_0^t \Big( \nu^I_{\ell', \ell}\bar{I}_{\ell'}(s) - \nu^I_{\ell, \ell'}\bar{I}_{\ell}(s)  \Big) ds\,,    \label{eqn-bar-I-BC}    \\ 
\bar{R}_\ell(t) &= \bar{R}_\ell(0) +  \sum_{\ell'=1}^L   \int_0^{\infty} \bigg( \int_0^t p_{\ell',\ell}(u)   \frac{f(u+\mfa)}{F^c(\mfa)}du \bigg)  \, \bar\mfi_{\ell'}(0,\mfa)d\mfa \non\\
& \quad + \sum_{\ell'=1}^L \int_{0}^t \int_0^{t-s} p_{\ell',\ell}(u) f(u)du \,\, \bar\mfi_{\ell'}(s,0) ds  + \sum_{\ell'=1}^L \int_0^t \Big( \nu^R_{\ell', \ell}\bar{R}_{\ell'}(s) - \nu^R_{\ell, \ell'}\bar{R}_{\ell}(s)  \Big) ds\,.  \label{eqn-bar-R-BC}
\end{align} 

\end{theorem}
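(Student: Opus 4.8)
The plan is to derive the PDE \eqref{eqn-PDE} and its boundary condition \eqref{eqn-bar-mfi-BC} directly from the limiting integral equation \eqref{eqn-bar-mfI}, then separately verify that the explicit formulas \eqref{ident-IC}--\eqref{ident-BC} solve the PDE and that uniqueness holds via the integral system \eqref{eqn-bar-mfi-BC-1}--\eqref{eqn-bar-R-BC}. First I would fix $t>0$ and differentiate \eqref{eqn-bar-mfI} in $\mfa$, using the absolute continuity of $\bar\mfI_\ell(0,\cdot)$ (with density $\bar\mfi_\ell(0,\cdot)$) and of $F$ (with density $f$). On the region $\mfa<t$ the term $\bar\mfI_\ell(0,(\mfa-t)^+)$ and the first double integral vanish, and differentiating the remaining terms in $\mfa$ shows $\bar\mfI_\ell(t,\cdot)$ is absolutely continuous with density satisfying a Volterra-type relation in which $\bar\Upsilon_\ell(t-\mfa)$ appears through the $\partial/\partial\mfa$ of $\int_{(t-\mfa)^+}^t(\cdots)ds$; on the region $\mfa\ge t$ one differentiates the initial-data terms instead. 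In either case one obtains $\bar\mfi_\ell(t,\mfa)$ as an explicit expression involving $\bar\mfi_{\ell'}(0,\cdot)$, $\bar\Upsilon_{\ell'}$, the transition kernel $p_{\ell',\ell}$, and $F^c$, $f$.

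Next I would establish the two closed-form expressions. Starting from the differentiated-in-$\mfa$ identity, I claim that for $t\le\mfa$ one has $\bar\mfi(t,\mfa)=\frac{F^c(\mfa)}{F^c(\mfa-t)}\bar\mfi(0,\mfa-t)e^{Qt}$: here the factor $F^c(\mfa)/F^c(\mfa-t)$ is the survival probability of the infectious period given it already exceeded $\mfa-t$, and $e^{Qt}$ (as a row-vector right multiplication) transports the patch distribution by the migration semigroup $p_{\ell',\ell}(t)=(e^{Qt})_{\ell',\ell}$. For $t>\mfa$ the analogous statement $\bar\mfi(t,\mfa)=F^c(\mfa)\bar\mfi(t-\mfa,0)e^{Q\mfa}$ says that an individual infected at time $t-\mfa$ (density $\bar\mfi(t-\mfa,0)$, the influx rate) survives to age $\mfa$ with probability $F^c(\mfa)$ and has migrated according to $e^{Q\mfa}$. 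I would verify directly that both formulas satisfy \eqref{eqn-PDE}: writing $\bar\mfi(t,\mfa)=F^c(\mfa)g(t-\mfa)e^{Q\mfa}$ (for $t>\mfa$), the chain rule gives $(\partial_t+\partial_\mfa)\bar\mfi=\frac{d}{d\mfa}[F^c(\mfa)]g(t-\mfa)e^{Q\mfa}+F^c(\mfa)g(t-\mfa)e^{Q\mfa}Q = -\mu(\mfa)\bar\mfi(t,\mfa)+\bar\mfi(t,\mfa)Q$, since $(F^c)'(\mfa)=-f(\mfa)=-\mu(\mfa)F^c(\mfa)$; the $t\le\mfa$ formula is handled the same way with $(F^c(\mfa)/F^c(\mfa-t))$ in place of $F^c(\mfa)$, noting that the $\mfa$-derivative of the ratio produces exactly $-\mu(\mfa)$ times the ratio. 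Matching the two pieces along $t=\mfa$ forces consistency of the initial data $\bar\mfi(0,\mfa)$ with the boundary value, and continuity holds under the stated hypotheses.

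Then I would derive the boundary condition. Setting $\mfa=0$ is delicate directly in \eqref{eqn-bar-mfI}, so instead I would let $\mfa\to 0$ after differentiating, or more cleanly identify $\bar\mfi_\ell(t,0)$ with the instantaneous infection rate $\bar\Upsilon_\ell(t)$. Indeed from the structure of \eqref{eqn-bar-mfI} the newly-infected influx into patch $\ell$ at time $t$ with infection age near $0$ is governed by $\bar\Upsilon_\ell(t)$, so $\bar\mfi_\ell(t,0)=\bar\Upsilon_\ell(t)$; substituting \eqref{eqn-bar-Upsilon} and then using the representation \eqref{eqn-bar-mfF} for $\bar\mfF_{\ell'}(t)$ rewritten via $\bar\mfi$ — using $\bar\mfI_{\ell'}(0,d\mfa)=\bar\mfi_{\ell'}(0,\mfa)d\mfa$ and $\bar\Upsilon_{\ell''}(s)=\bar\mfi_{\ell''}(s,0)$ — yields \eqref{eqn-bar-mfi-BC-1}. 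To reconcile with the other stated form \eqref{eqn-bar-mfi-BC}, I would plug the closed-form expressions \eqref{ident-IC}--\eqref{ident-BC} into \eqref{eqn-bar-mfi-BC}: splitting $\int_0^\infty \bar\lambda(\mfa)\bar\mfi_{\ell'}(t,\mfa)\frac{F^c((\mfa-t)^+)}{F^c(\mfa)}d\mfa$ at $\mfa=t$, on $\mfa<t$ use \eqref{ident-BC} to get $\int_0^t\bar\lambda(\mfa)F^c(\mfa)\bar\mfi_{\ell'}(t-\mfa,0)e^{Q\mfa}\cdot\frac{1}{F^c(\mfa)}d\mfa$, i.e. the convolution term after the change of variables $s=t-\mfa$ and reading off the $\ell''$-coordinate of $e^{Q\cdot}$; on $\mfa>t$ use \eqref{ident-IC} to get $\int_t^\infty\bar\lambda(\mfa)\frac{F^c(\mfa)}{F^c(\mfa-t)}\bar\mfi_{\ell'}(0,\mfa-t)e^{Qt}\cdot\frac{F^c(\mfa-t)}{F^c(\mfa)}d\mfa$, i.e. the initial-data term after $\mfa\mapsto\mfa+t$. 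The $F^c$ factors cancel cleanly, giving \eqref{eqn-bar-mfi-BC-1}; this also shows \eqref{eqn-bar-mfi-BC} and \eqref{eqn-bar-mfi-BC-1} are equivalent. Equations \eqref{eqn-bar-S-BC}--\eqref{eqn-bar-R-BC} follow from \eqref{eqn-barS}, \eqref{eqn-bar-I} and \eqref{eqn-barR} by the same substitutions $\bar\Upsilon_\ell(s)=\bar\mfi_\ell(s,0)$, $F_0(du|\mfa)=\frac{f(u+\mfa)}{F^c(\mfa)}du$, $F(du)=f(u)du$.

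Finally, uniqueness: given a solution of the PDE with the stated initial and boundary data, the method-of-characteristics argument above shows it must coincide with \eqref{ident-IC}--\eqref{ident-BC}, which are completely determined once $\bar\mfi(t,0)$, $\bar S_\ell$, $\bar I_\ell$, $\bar R_\ell$ are known. Those in turn solve the closed system \eqref{eqn-bar-mfi-BC-1}--\eqref{eqn-bar-R-BC}, which is a system of linear Volterra integral equations (the kernels are bounded since $\bar\lambda\le\lambda^*$, $p_{\ell',\ell}\le 1$, $\nu^I,\nu^S,\nu^R$ are finite, and $\bar S_\ell/(\bar B_\ell)^\gamma$ is bounded because $\bar B_\ell$ is bounded below away from $0$ on compact time intervals — this last point requires a short argument, or one works on a maximal interval and extends). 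A standard Gronwall/Picard contraction argument on $\bD([0,T])^{O(L)}$ gives existence and uniqueness on every $[0,T]$, hence globally. I expect the main obstacle to be the rigorous justification of the differentiation-in-$\mfa$ step that produces absolute continuity of $\bar\mfI_\ell(t,\cdot)$ and the identification $\bar\mfi_\ell(t,0)=\bar\Upsilon_\ell(t)$ of the boundary value with the force-of-infection term — in particular handling the nonsmooth cutoff $(\mfa-t)^+$ and the $\bar\mfI_{\ell'}(s,\mfa-(t-s))$ appearing under the migration integrals, which couples the density at different $(s,\text{age})$ pairs and must be shown to be consistent with the absolutely continuous ansatz.
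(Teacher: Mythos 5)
Your plan follows the same overall architecture as the paper --- identify $\bar\mfi_\ell(t,0)=\bar\Upsilon_\ell(t)$, express the boundary condition via $\bar\mfF$ to get \eqref{eqn-bar-mfi-BC-1}, reconcile it with \eqref{eqn-bar-mfi-BC} by substituting \eqref{ident-IC} and \eqref{ident-BC}, and close uniqueness via the Volterra system with a lower bound on $\bar B_\ell$ --- and all of that matches the paper closely, including the change of variable $\mfa\mapsto\mfa+t$ (resp.\ $s=t-\mfa$) in the two integrals of the boundary condition.

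The genuine gap is in the passage from the limiting integral equation \eqref{eqn-bar-mfI} to the PDE \eqref{eqn-PDE} and the closed forms \eqref{ident-IC}--\eqref{ident-BC}. You differentiate \eqref{eqn-bar-mfI} only once, in $\mfa$, which produces a Volterra-type \emph{integral} relation for $\bar\mfi$ along characteristics (because of the migration term $\int_{(t-\mfa)^+}^t\big[\cdots\bar\mfI_{\ell'}(s,\mfa-(t-s))\cdots\big]ds$), not a PDE; you then \emph{assert} the closed forms on probabilistic grounds and check by chain rule that they satisfy \eqref{eqn-PDE}. But this verification only shows the closed forms are \emph{a} solution of the PDE; it does not show that the $\bar\mfi$ obtained from the FLLN integral equation satisfies \eqref{eqn-PDE} or equals the closed forms. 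The paper closes this loop with a two-stage differentiation that your plan omits: it first applies $\partial_t+\partial_\mfa$ to $\bar\mfI(t,\mfa)$ using \eqref{eqn-bar-mfI} (this exploits the structure $\mfa-(t-s)$ in the migration integral so that only a boundary-at-$s=t$ contribution and a pointwise term survive, yielding the intermediate identity (3.6) in the paper), and then applies $\partial_\mfa$ to that identity, producing a genuine first-order linear PDE for $\bar\mfi$ with explicit source terms (the paper's (3.8)). That first-order PDE is then integrated along characteristics with the Duhamel formula to \emph{derive} \eqref{ident-IC} and \eqref{ident-BC}, and only then do the source terms combine with the closed forms to give \eqref{eqn-PDE}. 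Without this double-differentiation step --- or, alternatively, without plugging your candidate $\int_0^\mfa\tilde\mfi$ back into \eqref{eqn-bar-mfI} and invoking uniqueness of that integral equation --- the logical chain from the FLLN limit to the stated PDE is not closed. You flag some of this difficulty yourself at the end (the coupling of $\bar\mfI_{\ell'}(s,\mfa-(t-s))$ under the migration integral), but the resolution in the paper is precisely the two-stage differentiation, not just a more careful single $\partial_\mfa$.
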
 

\medskip

\begin{remark}
Consider the particular case where $Q=0$, i.e., $\nu^I_{\ell,\ell'}=0$ for all $\ell, \ell'$,  where individuals (at least the infected individuals) do not move.  In that case, 
the PDE in \eqref{eqn-PDE} simplifies to
\begin{equation} \label{eqn-PDE-1p}
\frac{\partial}{\partial t}\bar\mfi_\ell(t,\mfa)+\frac{\partial}{\partial \mfa}\bar\mfi_\ell(t,\mfa)=-\mu(\mfa)\bar\mfi_\ell(t,\mfa),
\end{equation}
for all $1\le \ell\le L$, where $\mu(\mfa)=f(\mfa)/F^c(\mfa)$ is the hazard rate function of the law of $\eta$. 
The formulas \eqref{ident-IC} and 
\eqref{ident-BC} reduce to  
\[ \bar\mfi_\ell(t,\mfa)=\frac{F^c(\mfa)}{F^c(\mfa-t)}\bar\mfi_\ell(0,\mfa-t)\,\text{ if }t\le \mfa,\, \text{ and }\quad \bar\mfi_\ell(t,\mfa)=F^c(\mfa)\bar\mfi_\ell(t-\mfa,0)
,\text{ if }\mfa< t\,,\]
for all $1\le \ell\le L$,
which are exactly the formulas for the homogeneous model in our previous work \cite{PP-2021}.  
\end{remark}

\begin{remark}
In the special case where $\lambda(t) = \tilde{\lambda}(t) {\bf1}_{t <\eta}$, with a deterministic function $\tilde{\lambda}(t)$, we have $\bar\lambda(t) =\E[\lambda^\ell_i(t)]= \tilde{\lambda}(t) F^c(t)$ for each $\ell,i$, and $\E[\lambda^{0,\ell}_j(t)|\tilde\tau^{\ell,N}_{j,0}=\mfa] = \tilde{\lambda}(t+\mfa)\frac{ F^c(t+\mfa)}{F^c(\mfa)}$ for each $\ell,j$. In this case, the boundary condition in \eqref{eqn-bar-mfi-BC} becomes
\begin{align}
\bar\mfi_{\ell}(t,0) = \frac{\bar{S}_\ell(t)}{ (\bar{B}_\ell(t))^\gamma} \sum_{\ell'=1}^L \beta_{\ell,\ell'} 
\int_0^{\infty}\tilde\lambda(\mfa)\, \bar\mfi_{\ell'}(t,\mfa)  d\mfa \,. 
\end{align} 
This is consistent with the formula in the homogeneous model (see Remark 3.3 in \cite{PP-2021}). It is also  how the boundary conditions for some PDE epidemic models  in the literature are usually formulated (see, e.g., \cite{inaba2004mathematical,magal2013two,foutel2020individual}). 
\end{remark}

\begin{proof}
We first derive the PDE model. Recall the expression of $\bar{\mfI}_\ell(t,\mfa) $ in \eqref{eqn-bar-mfI}.  Since both $\bar{\mfI}_\ell(0,\cdot)$ (for each $\ell$) and $F$ are absolutely continuous, then $\bar\mfI_\ell(t,\mfa)$ is differentiable in $t$ and $\mfa$, and we have
\begin{align} \label{eqn-PDE-mfI}
\frac{\partial}{\partial t}\bar\mfI_\ell(t,\mfa)+\frac{\partial}{\partial \mfa}\bar\mfI_\ell(t,\mfa)
&=-\sum_{\ell'=1}^L\int_0^{(\mfa-t)^+}p_{\ell',\ell}(t)\frac{f(t+y)}{F^c(y)}\bar\mfI_{\ell'}(0,dy)
+\bar\Upsilon_\ell(t) \non 
\\&\quad-\sum_{\ell'=1}^L\int_{(t-\mfa)^+}^t p_{\ell',\ell}(t-s)f(t-s)\bar\Upsilon_{\ell'}(s)ds
+\sum_{\ell'=1}^L\left[\nu^I_{\ell',\ell}\bar\mfI_{\ell'}(t,\mfa)-\nu^I_{\ell,\ell'}\bar\mfI_\ell(t,\mfa)\right]\,.
\end{align}
We also note that for $0<\mfa<t$ and $\mfa$ small,  $\frac{\partial}{\partial \mfa}\bar\mfI_\ell(t,\mfa)=\bar\Upsilon_\ell(t-\mfa)+O(\mfa)$, and consequently, letting $\mfa\to0$, we deduce that
\begin{align} \label{eqn-Upsilon-mfi}
 \bar\Upsilon_\ell(t)  =  \bar\mfi_{\ell}(t,0)\,. 
\end{align} 
We differentiate \eqref{eqn-PDE-mfI} with respect to $\mfa$, at least in the distributional sense, and deduce the following identify from the fact that $\frac{\partial}{\partial \mfa}\frac{\partial}{\partial t}\bar\mfI_\ell(t,\mfa)=\frac{\partial}{\partial t}\frac{\partial}{\partial \mfa}\bar\mfI_\ell(t,\mfa)$: 
\begin{align}\label{PDE-first}
\frac{\partial}{\partial t}\bar\mfi_\ell(t,\mfa)+\frac{\partial}{\partial \mfa}\bar\mfi_\ell(t,\mfa)
&=
-{\bf1}_{t\le \mfa}\sum_{\ell'=1}^Lp_{\ell',\ell}(t)\frac{f(\mfa)}{F^c(\mfa-t)}\bar\mfi_{\ell'}(0,\mfa-t) \non \\&\quad
-{\bf1}_{\mfa< t}\sum_{\ell'=1}^Lp_{\ell',\ell}(\mfa)f(\mfa)\bar\mfi_{\ell'}(t-\mfa,0)+\sum_{\ell'=1}^L\left[\nu^I_{\ell',\ell}\bar\mfi_{\ell'}(t,\mfa)-\nu^I_{\ell,\ell'}\bar\mfi_\ell(t,\mfa)\right]\,.
\end{align}

We next obtain a relation between $\mfi(t,\mfa)$ and $\mfi(0,\mfa-t)$ or $\mfi(t-\mfa,0)$, depending upon whether $t\le \mfa$ or $\mfa< t$, which will lead to the expressions of $\bar\mfi(t,\mfa)$ in \eqref{ident-IC} and \eqref{ident-BC}.   We start with the first case
$t\le \mfa$. For  $0 \le s \le t$, by \eqref{PDE-first}, 
\begin{align} \label{eqn-PDE-first-1}
\frac{d}{ds}\bar\mfi_\ell(s,\mfa-t+s)&=\Big(\frac{\partial}{\partial t}+\frac{\partial}{\partial \mfa}\Big)\bar\mfi_\ell(s,\mfa-t+s)\nonumber \\
&=-\sum_{\ell'=1}^L p_{\ell',\ell}(s)\frac{f(\mfa-t+s)}{F^c(\mfa-t)}\bar\mfi_{\ell'}(0,\mfa-t)+(\bar\mfi(s,\mfa-t+s)Q)_\ell\,.
\end{align}
The value at time $s=t$ of the solution of this linear system of ODEs is given by  \eqref{ident-IC}, that is, 
\begin{equation*}
\bar\mfi(t,\mfa)=\frac{F^c(\mfa)}{F^c(\mfa-t)} \bar\mfi(0,\mfa-t)e^{Qt}\,.
\end{equation*}
To see this, by letting $p(t) = (p_{\ell,\ell'}(t))=e^{Qt}$, $y_s = \bar\mfi_\ell(s,\mfa-t+s)$, and $\gamma_s = \frac{f(\mfa-t+s)}{F^c(\mfa-t)}$, the equation \eqref{eqn-PDE-first-1} can be written as 
\begin{align*}
\dot{y}_s  & = -\gamma_s y_0  p(s) +y_s Q \\
& = -\gamma_s y_0 e^{sQ} +y_s Q\,.
\end{align*}
By the Duhamel formula, we obtain the following solution to this linear ODE:
\begin{align*}
y_t  &= y_0 e^{tQ} - y_0 \int_0^t \gamma_s e^{sQ} e^{(t-s)Q}ds \\
&=  \Big( 1- \int_0^t \gamma_sds \Big) y_0 e^{tQ} \\
&= \frac{F^c(\mfa)}{F^c(\mfa-t)} \, y_0 e^{tQ}\,.
\end{align*}

We then consider the case $\mfa < t$. 
For $0 \le s \le \mfa$,  by \eqref{PDE-first}, 
\begin{align*}
\frac{d}{ds}\bar\mfi_\ell(t-\mfa+s,s)&=\Big(\frac{\partial}{\partial t}+\frac{\partial}{\partial \mfa}\Big)\bar\mfi_\ell(t-\mfa+s,s)\\
&=-\sum_{\ell'=1}^L p_{\ell',\ell}(s)f(s)\bar\mfi_{\ell'}(t-\mfa,0)+(\bar\mfi_\ell(t-\mfa+s,s)Q)_\ell\,.
\end{align*}
The value at time $s=\mfa$ of the solution of this linear system of ODEs is given by 
\eqref{ident-BC}, that is,
\begin{equation*}
\bar\mfi(t,\mfa)= F^c(\mfa) \bar\mfi(t-\mfa,0)  e^{Q\mfa}\,.
\end{equation*}

Thus, by \eqref{PDE-first} and these two identities, we obtain the PDE in \eqref{eqn-PDE}.

We then derive the boundary condition.  By \eqref{eqn-Upsilon-mfi} and \eqref{eqn-bar-Upsilon}, using \eqref{eqn-bar-mfF}, we  obtain the boundary condition expression \eqref{eqn-bar-mfi-BC-1} for $\bar\mfi_{\ell}(t,0)$: 
\begin{align*}
\bar\mfi_{\ell}(t,0) &= \frac{\bar{S}_\ell(t)}{ (\bar{B}_\ell(t))^\gamma} \sum_{\ell'=1}^L \beta_{\ell,\ell'} 
\bigg( \sum_{\ell''=1}^L p_{\ell'',\ell'}(t) \int_0^{\infty} \bar{\lambda}(\mfa+t) \, \bar\mfi_{\ell''}(0,\mfa)d\mfa 
 \label{eqn-bar-mfi-BC-1}\\
 & \qquad \qquad \qquad \qquad \quad +  \sum_{\ell''=1}^L   \int_0^t \bar{\lambda}(t-s) p_{\ell'',\ell'}(t-s)  \, \bar\mfi_{\ell''}(s,0) ds  \bigg)\,. 
 \end{align*} 

We  rewrite the first integral on the right (in vector form) as follows
\begin{align*}
\int_0^{\infty} \bar{\lambda}(\mfa+t) \, \bar\mfi(0,\mfa)e^{Qt}d\mfa&=\int_t^{\infty}\bar\lambda(u)\bar\mfi(0,u-t)e^{Qt}du\\
&=\int_t^{\infty}\bar\lambda(u)\bar\mfi(t,u) \frac{F^c(u-t)}{F^c(u)}du\,,
\end{align*}
where in the second equality we have used \eqref{ident-IC}.
We rewrite the second integral as follows
\begin{align*}
\int_0^t\bar\lambda(t-s)\bar\mfi(s,0)e^{Q(t-s)}ds&=\int_0^t\bar\lambda(u)\bar\mfi(t-u,0)e^{Qu}du\\
&=\int_0^t\bar\lambda(u)\bar\mfi(t,u) \frac{1}{F^c(u)} du\\
&=\int_0^t\bar\lambda(u)\bar\mfi(t,u) \frac{F^c(u-t)}{F^c(u)} du\,,
\end{align*}
where in the second equality we have used \eqref{ident-BC}, and the fact that $F^c=1$ on $\mathbb{R}_-$.
From these we obtain the boundary condition expression of $\bar\mfi_{\ell}(t,0)$
 in \eqref{eqn-bar-mfi-BC}.  

In addition, the expressions of $\bar{S}_\ell(t)$ in \eqref{eqn-bar-S-BC}, $\bar{I}_\ell(t)$ in \eqref{eqn-bar-I-BC} and 
$\bar{R}_\ell(t)$ in \eqref{eqn-bar-R-BC}, are obtained from the equations in \eqref{eqn-barS}, \eqref{eqn-bar-I} and \eqref{eqn-barR} by replacing $ \bar\Upsilon_\ell(t)  =  \bar\mfi_{\ell}(t,0) $ and using the density $\bar\mfi_{\ell}(0,\mfa)$. 

It is then clear that existence and uniqueness of the solution to \eqref{eqn-PDE}  follows from the existence and uniqueness of the solution  to the system of equations satisfied by the boundary condition, $\bar{S}$,  
$\bar{I}$ and $\bar{R}$ (Lemma \ref{uniqBC} below), as well as the explicit expressions of the PDEs \eqref{eqn-PDE} in both cases $\mfa\ge t$ and $\mfa< t$ in terms of the initial conditions and boundary conditions in \eqref{ident-IC} and \eqref{ident-BC}.
\end{proof}

We next show that there exists a unique solution to the boundary conditions determined by the set of equations in  \eqref{eqn-bar-mfi-BC-1}--\eqref{eqn-bar-R-BC}.

\begin{lemma}\label{uniqBC}
The system of integral equations \eqref{eqn-bar-mfi-BC-1}, \eqref{eqn-bar-S-BC},  \eqref{eqn-bar-I-BC}
and \eqref{eqn-bar-R-BC} has a unique solution in $\bC(\R_+;\R_+^{4L})$.
\end{lemma}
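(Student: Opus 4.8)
The plan is to eliminate the linear block $(\bar{S},\bar{I},\bar{R})$, thereby reducing the system to a single Volterra-type fixed-point problem for the boundary trace $h:=(\bar\mfi_\ell(\cdot,0))_{\ell\in\sL}$, and then to run the standard Picard/Gronwall argument on successive intervals $[0,T]$ and patch the pieces together. \emph{Step 1 (eliminating $\bar S,\bar I,\bar R$).} Fix $T>0$ and treat $h\in\bC([0,T];\R^L)$ as given. Then \eqref{eqn-bar-S-BC}, \eqref{eqn-bar-I-BC} and \eqref{eqn-bar-R-BC} form a \emph{linear}, non-homogeneous Volterra system for $(\bar S,\bar I,\bar R)$ with bounded migration coefficients and bounded kernels ($p_{\ell',\ell}\le 1$, $f$ a density, $\bar\lambda\le\lambda^*$); by a routine contraction it has a unique solution $\Psi[h]\in\bC([0,T];\R^{3L})$, and $h\mapsto\Psi[h]$ satisfies a Volterra--Lipschitz bound $|\Psi[h^1](t)-\Psi[h^2](t)|\le C_T\int_0^t|h^1(s)-h^2(s)|\,ds$. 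Substituting $\Psi[h]$ together with $h$ itself into the right-hand side of \eqref{eqn-bar-mfi-BC-1} defines a map $\Phi$ for which $h$ is a fixed point if and only if $(h,\Psi[h])$ solves the full system; the crucial point is that after this substitution the pointwise-in-$t$ factor $\bar S_\ell(t)/\bar B_\ell(t)^\gamma$ appearing in \eqref{eqn-bar-mfi-BC-1} is expressed through the \emph{past} of $h$ only.

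\emph{Step 2 (a priori bounds).} Adding \eqref{eqn-bar-S-BC}, \eqref{eqn-bar-I-BC} and \eqref{eqn-bar-R-BC}, the terms carrying $h$ and the initial density $\bar\mfi_\ell(0,\cdot)$ cancel, so $\bar B_\ell:=\bar S_\ell+\bar I_\ell+\bar R_\ell$ obeys a pure migration balance; summing further over $\ell$ gives $\sum_\ell\bar B_\ell\equiv\sum_\ell\bar B_\ell(0)=1$, hence $0\le\bar S_\ell\le\bar B_\ell\le 1$. On the set where the components remain nonnegative the out-migration rate is bounded by some $\bar\nu$, so Gronwall gives $\bar B_\ell(t)\ge\bar B_\ell(0)e^{-\bar\nu t}$; assuming (as we may, dropping trivial patches) that $\bar B_\ell(0)>0$, this yields $\bar B_\ell(t)\ge\delta>0$ on $[0,T]$. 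Nonnegativity of $\bar S_\ell$ and $\bar R_\ell$ follows from the cooperative (Metzler) structure of their linear subsystems, and that of $\bar I_\ell$ from the identification $\bar I_\ell(t)=\int_0^\infty\bar\mfi_\ell(t,\mfa)\,d\mfa$ with the nonnegative density furnished by \eqref{ident-IC}--\eqref{ident-BC} (or, alternatively, all these properties are inherited from the FLLN limit of Theorem \ref{thm-FLLN}). Using $\bar\lambda\le\lambda^*$, $\beta_{\ell\ell'}\le\beta^*$, $\int_0^\infty\bar\mfi_\ell(0,\mfa)\,d\mfa=\bar I_\ell(0)\le 1$ and $0\le\bar S_\ell/\bar B_\ell^\gamma\le\bar B_\ell^{1-\gamma}\le 1$, one obtains $|\Phi[h](t)|\le C(1+\int_0^t|h(s)|\,ds)$ with $C=C(L,\beta^*,\lambda^*)$, so $\Phi$ maps the set $\{\|h\|_{\infty,[0,T_0]}\le R\}\cap\{h\ge 0\}$ into itself for $T_0$ small and $R$ chosen accordingly.

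\emph{Step 3 (contraction and globalization).} On this invariant set the map $(s,b)\mapsto s/b^\gamma$ is Lipschitz with constant at most $\delta^{-\gamma}+\gamma\delta^{-\gamma-1}$, and the bracketed functional in \eqref{eqn-bar-mfi-BC-1} is bounded and Volterra--Lipschitz in $h$; combining these with the estimate of Step 1 gives $|\Phi[h^1](t)-\Phi[h^2](t)|\le C_T'\int_0^t|h^1(s)-h^2(s)|\,ds$, hence $|\Phi^n[h^1](t)-\Phi^n[h^2](t)|\le\frac{(C_T't)^n}{n!}\|h^1-h^2\|_{\infty,[0,T_0]}$. Thus $\Phi^n$ is a contraction for $n$ large and has a unique fixed point $h^*\in\bC([0,T_0];\R_+^L)$; then $(\bar S,\bar I,\bar R)=\Psi[h^*]$ completes the unique solution in $\bC([0,T_0];\R_+^{4L})$. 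Because of the Volterra structure the argument restarts verbatim from $t=T_0$ with constants that are locally uniform, so the local solutions agree on overlaps and patch into a unique global solution in $\bC(\R_+;\R_+^{4L})$.

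The main obstacle is the nonlinear coefficient $\bar S_\ell/\bar B_\ell^\gamma$: it is not globally Lipschitz when $\gamma>0$ (its $\bar S_\ell$-derivative blows up as $\bar B_\ell\downarrow 0$) and it is evaluated at the current time $t$ rather than under an integral, either of which would obstruct a Gronwall iteration. Both issues are dealt with above --- the blow-up by the a priori lower bound $\bar B_\ell\ge\delta$ of Step 2, and the pointwise appearance by the reduction of Step 1, which rewrites $\bar S_\ell(t)$ and $\bar B_\ell(t)$ as functionals of the history of $h$, so that the final estimate keeps the integrated form $\int_0^t|h^1-h^2|$. Verifying that $\bar B_\ell$ stays bounded below --- i.e. handling patches that are (momentarily) empty --- is the one genuinely delicate point; it could alternatively be finessed by replacing $\bar S_\ell/\bar B_\ell^\gamma$ with a truncation, solving the regularized problem, and checking a posteriori that the solution lies in the region where the truncation is inactive.
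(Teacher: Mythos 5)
Your proposal is correct in spirit and shares the paper's key ingredient, but implements it differently. The paper first isolates the $\gamma=0$ case (linear, standard) and, for $\gamma\in(0,1]$, performs the change of variable $\bar V_\ell(t)=\bar\mfi_\ell(t,0)\,\bar B_\ell(t)^\gamma/\bar S_\ell(t)$; this pushes the problematic pointwise coefficient $\bar S_{\ell'}/\bar B_{\ell'}^\gamma$ \emph{inside} the Volterra integrals in the recast system for $(\bar V,\bar S,\bar I,\bar R)$, after which the paper invokes standard nonlinear Volterra theory, provided $\bar S_\ell/\bar B_\ell^\gamma$ is bounded and Lipschitz — which in turn requires a lower bound $\bar B_\ell(t)\ge c_T>0$, established by a Gronwall comparison on the migration balance for $\bar B$. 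You instead eliminate the linear block $(\bar S,\bar I,\bar R)$ via a Volterra resolvent $\Psi[h]$, reducing to a single Picard iteration on the boundary trace $h=(\bar\mfi_\ell(\cdot,0))_\ell$; the pointwise coefficient is then handled because $\Psi$ itself satisfies a $\int_0^t|h^1-h^2|$ estimate. The two routes are essentially equivalent; the paper's change of variable is a shortcut that avoids your explicit construction of $\Psi$, while your route makes the contraction constant more transparent and avoids citing general nonlinear Volterra theory. Your observation that $\sum_\ell\bar B_\ell\equiv1$ (via cancellation of the $h$- and initial-density terms when summing \eqref{eqn-bar-S-BC}, \eqref{eqn-bar-I-BC}, \eqref{eqn-bar-R-BC}) is a nice conservation remark that the paper does not record.

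Two small cautions. First, the aside that nonnegativity ``is inherited from the FLLN limit of Theorem~\ref{thm-FLLN}'' is circular here: Lemma~\ref{uniqBC} is used to prove existence/uniqueness of the PDE boundary data, and nonnegativity of the limit has not been established independently at this point. Second, ``nonnegativity of $\bar S_\ell$ follows from the Metzler structure'' needs one more word: the equation for $\bar S_\ell$ carries the sink $-\bar\mfi_\ell(s,0)$, which is not a priori dominated; what saves it is that the boundary condition \eqref{eqn-bar-mfi-BC-1} carries the prefactor $\bar S_\ell(t)$, so the sink vanishes on the set $\{\bar S_\ell=0\}$, and then the Metzler argument closes. (To be fair, the paper does not verify nonnegativity explicitly either, and it also silently assumes $\bar B_\ell(0)>0$ for all $\ell$ when asserting the uniform bound $\bar B_\ell(t)>c_T$; your remark on truncating or dropping empty patches is a reasonable way to handle that corner case.)
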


\begin{proof}
We consider the cases of $\gamma=0$ and $\gamma \in (0,1]$ separately. When $\gamma=0$, the set of equations reduces to the systems of linear Volterra equations of $\bar\mfi_{\ell}(t,0)$ and $\bar{S}_\ell(t)$, that is,
\begin{align*} 
\bar\mfi_{\ell}(t,0) &= \bar{S}_\ell(t) \sum_{\ell'=1}^L \beta_{\ell,\ell'} 
\bigg( \sum_{\ell''=1}^L p_{\ell'',\ell'}(t) \int_0^{\infty} \bar{\lambda}(\mfa+t) \, \bar\mfi_{\ell''}(0,\mfa)d\mfa 
 \non\\
 & \qquad \qquad \qquad \qquad +  \sum_{\ell''=1}^L   \int_0^t \bar{\lambda}(t-s) p_{\ell'',\ell'}(t-s)  \, \bar\mfi_{\ell''}(s,0) ds  \bigg)\,,
\end{align*}
and
\begin{align*}
\bar{S}_\ell(t) &= \bar{S}_\ell(0) - \int_0^t  \bar\mfi_{\ell}(s,0) ds + \sum_{\ell'=1}^L \int_0^t \Big( \nu^S_{\ell', \ell}\bar{S}_{\ell'}(t) - \nu^S_{\ell, \ell'}\bar{S}_{\ell}(t)  \Big) ds\,. 
\end{align*}
Thus, the existence and uniqueness of a solution follow from the well known theory of linear Volterra integral equations (see, e.g., \cite{brunner2017volterra}). 

We next consider the case $\gamma \in (0,1]$. 
 Define 
\[ \bar{V}_\ell(t)=\bar{\mfi}(t,0)\frac{(\bar{B}_\ell(t))^\gamma}{\bar{S}_\ell(t)}, \quad 1\le \ell\le L, \ t\ge0.\] 
Let moreover
\begin{align*}
 f_\ell(t)&=\sum_{\ell', \ell''=1}^L\beta_{\ell,\ell'}p_{\ell'',\ell'}(t)\int_0^{\infty}\bar{\lambda}(\mfa+t)\, \bar{\mfi}_{\ell''}(0,\mfa)d\mfa\, ,\\
 g_\ell(t)&=\bar{I}_\ell(0)-\sum_{\ell'=1}^L \int_0^{\infty} \bigg( \int_0^t p_{\ell',\ell}(u)   \frac{f(u+\mfa)}{F^c(\mfa)}du \bigg)   \, \bar{\mfi}_{\ell'}(0,\mfa)d\mfa,\\
 h_\ell(t)&=\bar{R}_\ell(0)+\sum_{\ell'=1}^L\int_0^{\infty} \bigg( \int_0^t p_{\ell',\ell}(u)   \frac{f(u+\mfa)}{F^c(\mfa)}du \bigg)   \, \bar{\mfi}_{\ell'}(0,\mfa)d\mfa,\\
H_{\ell,\ell'}(t)&=\sum_{\ell''=1}^L\beta_{\ell,\ell''}\bar{\lambda}(t)p_{\ell',\ell''}(t),\\
G_{\ell,\ell'}(t)&=\sum_{\ell'=1}^L\int_0^tp_{\ell',\ell}(u)f(u)du\,.
 \end{align*}
With these notations, the system of equations  \eqref{eqn-bar-mfi-BC-1}, \eqref{eqn-bar-S-BC},  \eqref{eqn-bar-I-BC}
and \eqref{eqn-bar-R-BC} can be rewritten as
\begin{align*}
\bar{V}_\ell(t)&= f_\ell(t)+\sum_{\ell'=1}^L\int_0^tH_{\ell,\ell'}(t-s)\frac{\bar{S}_{\ell'}(s)}{(\bar{B}_{\ell'}(s))^\gamma}
\bar{V}_{\ell'}(s)ds,\\
\bar{S}_\ell(t)&=\bar{S}_\ell(0)-\int_0^t \frac{\bar{S}_{\ell}(s)}{(\bar{B}_{\ell}(s))^\gamma}\bar{V}_\ell(s)ds
+ \sum_{\ell'=1}^L \int_0^t \Big( \nu^S_{\ell', \ell}\bar{S}_{\ell'}(s) - \nu^S_{\ell, \ell'}\bar{S}_{\ell}(s)  \Big) ds,\\
\bar{I}_\ell(t)&=g_\ell(t) + \int_0^t \frac{\bar{S}_{\ell}(s)}{(\bar{B}_{\ell}(s))^\gamma}\bar{V}_\ell(s)ds -\sum_{\ell'=1}^L\int_0^tG_{\ell,\ell'}(t-s)\frac{\bar{S}_{\ell'}(s)}{(\bar{B}_{\ell'}(s))^\gamma}
\bar{V}_{\ell'}(s)ds \\
& \qquad \qquad  +\sum_{\ell'=1}^L \int_0^t \Big( \nu^I_{\ell', \ell}\bar{I}_{\ell'}(s) - \nu^I_{\ell, \ell'}\bar{I}_{\ell}(s)  \Big) ds,\\
\bar{R}_\ell(t)&=h_\ell(t)+\sum_{\ell'=1}^L\int_0^tG_{\ell,\ell'}(t-s)\frac{\bar{S}_{\ell'}(s)}{(\bar{B}_{\ell'}(s))^\gamma}
\bar{V}_{\ell'}(s)ds+\sum_{\ell'=1}^L \int_0^t \Big( \nu^R_{\ell', \ell}\bar{R}_{\ell'}(s) - \nu^R_{\ell, \ell'}\bar{R}_{\ell}(s)  \Big) ds\, .
\end{align*}

In order to deduce existence and uniqueness of a unique solution of this system of $4L$ equations from standard results
on integral equations (see, e.g., \cite{brunner2017volterra}), it suffices to show that 
$\frac{\bar{S}_{\ell}(s)}{(\bar{S}_{\ell}(s)+\bar{I}_{\ell}(s)+\bar{R}_{\ell}(s))^\gamma}$ is a bounded and uniformly Lipschitz function of its three arguments. 

By \eqref{eqn-bar-S-BC}, \eqref{eqn-bar-I-BC} and \eqref{eqn-bar-R-BC},
we have
\begin{align*}
\bar{B}_\ell(t) & = \bar{B}_\ell(0)  + \sum_{\ell'=1}^L \int_0^t \Big( \nu^S_{\ell', \ell}\bar{S}_{\ell'}(t) - \nu^S_{\ell, \ell'}\bar{S}_{\ell}(t)  \Big) ds +  \sum_{\ell'=1}^L \int_0^t \Big( \nu^I_{\ell', \ell}\bar{I}_{\ell'}(t) - \nu^I_{\ell, \ell'}\bar{I}_{\ell}(t)  \Big) ds \\
& \qquad +  \sum_{\ell'=1}^L \int_0^t \Big( \nu^R_{\ell', \ell}\bar{R}_{\ell'}(t) - \nu^R_{\ell, \ell'}\bar{R}_{\ell}(t)  \Big) ds \\
& \ge  \bar{B}_\ell(0)  + \sum_{\ell'=1}^L \int_0^t \Big( \underline{\nu}_{\ell', \ell}\bar{B}_{\ell'}(t) - \bar{\nu}_{\ell, \ell'}\bar{B}_{\ell}(t)  \Big) ds  \\
& =  \bar{B}_\ell(0)  -  \int_0^t \Big(\sum_{\ell'=1}^L  \bar{\nu}_{\ell, \ell'} \Big) \bar{B}_{\ell}(t)  ds + \sum_{\ell'=1}^L \int_0^t  \underline{\nu}_{\ell', \ell}\bar{B}_{\ell'}(t) ds 
\end{align*} 
where $\underline{\nu}_{\ell', \ell} =\min \{\nu^S_{\ell', \ell}, \nu^I_{\ell', \ell},\nu^R_{\ell', \ell} \}$ and 
$\bar{\nu}_{\ell', \ell} =\max \{\nu^S_{\ell', \ell}, \nu^I_{\ell', \ell},\nu^R_{\ell', \ell} \}$. 
From this we can deduce that there exists a constant $c_T>0$ such that 
for each $0 \le t \le T$, $1\le\ell\le L$, $\bar{B}_\ell(t) >c_T$.  Then the boundedness and uniform Lipschitz properties follow easily. This completes the proof of the lemma. 
\end{proof}

\begin{remark}\label{GeneralF}
We can follow a similar argument as in \cite{PP-2021} for the homogeneous model to derive the PDE model when the distribution function $F$ is not necessarily absolutely continuous. We replace $f(x)dx$ by $F(dx)$. 
Then the PDE in \eqref{eqn-PDE} becomes
\begin{align} \label{eqn-PDE-general} 
\frac{\partial}{\partial t}\bar\mfi(t,\mfa)+\frac{\partial}{\partial \mfa}\bar\mfi(t,\mfa)
&=- \frac{F(d\mfa)}{F^c(\mfa^-)} \, \bar\mfi(t,\mfa) +\bar\mfi(t,\mfa)Q\,. 
\end{align}
The boundary condition can be modified accordingly. We omit the details here. 
\end{remark}

\bigskip

\section{Proof of the FLLN} \label{sec-proof-FLLN} 

\subsection{Convergence of $\bar\mfF^N_\ell$}

Recall the expression of $A^{N}_\ell$ in \eqref{eqn-An} and the instantaneous infectivity rate function $\Upsilon^{N}_{\ell}$ in \eqref{eqn-Upsilon}.  The process $A^{N}_\ell$ has the semimartingale decomposition
\begin{align} \label{eqn-An-decomp}
A^{N}_\ell(t) = M_{A, \ell}^{N}(t) + \int_0^t \Upsilon^{N}_\ell(s)ds,
\end{align}
where 
\begin{equation} \label{eqn-MAn}
M_{A, \ell}^{N}(t) = \int_0^t \int_0^\infty {\bf1}_{u \le \Upsilon^{N}_\ell(s^-)} \overline{Q}_{\ell}(ds, du)
\end{equation}
and $\overline{Q}_{\ell}(ds, du) := Q_{\ell}(ds,du) - ds du$ is the compensated PRM associated with $Q_{\ell}(ds,du) $. 
It can be shown that $\{M_{A, \ell}^{N}(t):  t\ge 0\} $ is a square-integrable martingale with respect to the filtration $\{\sF^N_t: t\ge 0\}$ where
\begin{align*}
\sF^N_t & := \sigma \bigg\{S_\ell^{N}(0), I_\ell^{N}(0), R_\ell^{N}(0), \mfI^N_\ell(0, \cdot), \{\lambda^{0,\ell}_{j}(\cdot)\}_{j\ge 1},  \{\lambda^{\ell}_{i}(\cdot)\}_{i\ge 1},   \ell \in \sL \bigg\} \\
& \qquad \quad  \vee \sigma \bigg\{  \int_0^{t'} \int_0^\infty \bone_{u \le \Upsilon^{N}_\ell(s^-)} Q_{\ell}(ds, du): t' \le t,  \ell \in \sL \bigg\}. 
\end{align*}
See, e.g., \cite[Chapter VI]{ccinlar2011probability}. The martingale $M_{A, \ell}^{N}(t)$ has a finite quadratic variation 
$$
\langle M_{A, \ell}^{N} \rangle(t)  = \int_0^t  \Upsilon^{N}_\ell(s)ds, \quad t\ge 0,
$$
which satisfies
\begin{align} \label{eqn-Upsilon-N-bound}
0 \le   \int_s^t \bar \Upsilon^{N}_\ell(u)du \le  \lambda^*  \beta^* (t-s), \quad \text{w.p.1 \quad for } \quad 0 \le s \le t.  
\end{align}
Since $\langle\bar{M}^N_{A,\ell}\rangle(t)= N^{-1}\int_0^t\bar{\Upsilon}^N_\ell(s)ds \le N^{-1}  \lambda^*  \beta^* t$, from Doob's inequality we deduce that locally uniformly in $t$, 
\begin{align} \label{eqn-barMn-conv}
\bar{M}_{A, \ell}^{N}(t) \to 0  
\end{align} 
in probability as $N\to\infty$, 
and as a consequence, the following lemma holds (whose proof is very similar to that of Lemma 4.1 in \cite{FPP2020b}). 
\begin{lemma} \label{lem-An-conv}
For each  $\ell \in \sL$,  the sequence $\{\bar{A}^{N}_\ell: N \in \NN\}$ is tight in $\bD$, and the limit of each convergent subsequence of $\{\bar{A}^{N}_\ell\}$, denoted by $\bar{A}_{\ell}$, satisfies
\begin{equation*}
\bar{A}_{\ell} = \lim_{N\to\infty} \bar{A}^{N}_\ell = \lim_{N\to\infty}  \int_0^{\cdot} \bar\Upsilon^{N}_\ell(s)ds,
\end{equation*}
and
\begin{equation} \label{eqn-barA-inc} 
0 \le \bar{A}_{\ell}(t) - \bar{A}_{\ell}(s) \le  \lambda^* \beta^* (t-s), \quad \text{w.p.1} \quad \text{for} \quad 0 \le s \le t. 
\end{equation}
 \end{lemma}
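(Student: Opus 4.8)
The plan is to establish tightness of $\{\bar A^N_\ell\}$ in $\bD$ directly from the semimartingale decomposition \eqref{eqn-An-decomp}, and then identify any subsequential limit. First I would write $\bar A^N_\ell(t) = \bar M^N_{A,\ell}(t) + \int_0^t \bar\Upsilon^N_\ell(s)\,ds$. The martingale part $\bar M^N_{A,\ell}$ converges to zero locally uniformly in probability by \eqref{eqn-barMn-conv}, hence is trivially tight with a continuous (zero) limit. For the drift part, the bound \eqref{eqn-Upsilon-N-bound} gives that $t\mapsto \int_0^t \bar\Upsilon^N_\ell(s)\,ds$ is, for every $N$, a nondecreasing $\lambda^*\beta^*$-Lipschitz function vanishing at $0$; such a family is uniformly bounded and equicontinuous on compacts, so it is relatively compact in $\bC$ by Arzel\`a--Ascoli, and in particular tight in $\bD$. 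Adding the two pieces, $\{\bar A^N_\ell\}$ is tight in $\bD$ (the limit is in fact concentrated on $\bC$).

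Next I would fix a convergent subsequence, along which $(\bar A^N_\ell, \int_0^\cdot \bar\Upsilon^N_\ell(s)\,ds, \bar M^N_{A,\ell})$ converges jointly in law (pass to a further subsequence and use Skorohod representation if convenient, so that convergence is a.s.\ locally uniform). Since $\bar M^N_{A,\ell}\to 0$ and $\bar A^N_\ell = \bar M^N_{A,\ell} + \int_0^\cdot \bar\Upsilon^N_\ell$, the limits of $\bar A^N_\ell$ and of $\int_0^\cdot \bar\Upsilon^N_\ell(s)\,ds$ coincide; call the common limit $\bar A_\ell$. This gives $\bar A_\ell = \lim_N \bar A^N_\ell = \lim_N \int_0^\cdot \bar\Upsilon^N_\ell(s)\,ds$. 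The increment bound \eqref{eqn-barA-inc} is inherited in the limit: for $0\le s\le t$, the map $(x_\cdot)\mapsto x_t - x_s$ is continuous on $\bC$ and $0\le \int_s^t \bar\Upsilon^N_\ell(u)\,du \le \lambda^*\beta^*(t-s)$ holds for every $N$ by \eqref{eqn-Upsilon-N-bound}, so the same inequality holds w.p.1 for $\bar A_\ell$ (first on a countable dense set of pairs $(s,t)$, then for all pairs by continuity of $\bar A_\ell$).

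The only genuinely delicate point is joint convergence and the passage to the limit inside $\int_0^\cdot \bar\Upsilon^N_\ell(s)\,ds$: a priori $\bar\Upsilon^N_\ell$ need not converge, so one cannot argue pointwise in the integrand. This is circumvented exactly as above --- one works with the integrated process $t\mapsto \int_0^t\bar\Upsilon^N_\ell(s)\,ds$ itself as the tight object, never with $\bar\Upsilon^N_\ell$ directly --- which is why the lemma only characterizes $\bar A_\ell$ as a limit of integrals rather than as an explicit integral of a limiting rate; the explicit form of $\bar A_\ell$ (equivalently, of $\bar\Upsilon_\ell$ and the other limits) is pinned down only later once all components are shown to converge jointly. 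Since the argument is structurally identical to that of Lemma 4.1 in \cite{FPP2020b}, I would present the above as the proof and refer to that reference for the routine details of the tightness and limit-identification steps.
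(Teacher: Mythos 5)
Your proof is correct and follows essentially the same route as the paper, which displays the decomposition $\bar A^N_\ell = \bar M^N_{A,\ell} + \int_0^\cdot \bar\Upsilon^N_\ell(s)\,ds$, notes that the martingale part vanishes by Doob's inequality, and defers the remaining standard steps to Lemma~4.1 of \cite{FPP2020b}. Your elaboration (Arzel\`a--Ascoli for the uniformly Lipschitz drift, identification of subsequential limits via the vanishing martingale, and passing the increment bound to the limit on a countable dense set of pairs) is precisely what that reference carries out.
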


It clearly follows from the last inequality that for each $\ell\in\mathcal{L}$, the measure whose distribution function is the increasing function $\bar{A}_{\ell}(t)$ is absolutely continuous with respect to  Lebesgue's measure. In fact, since the sequence $\bar\Upsilon^{N}_\ell$ is bounded in $L^2(0,T)$ for any $T>0$, the above converging subsequence is such that $\bar\Upsilon^{N}_\ell$ converges in law in $L^2(0,T)$ equipped with its weak topology. But we do not know yet that its limit is the function $\bar\Upsilon_\ell$ given by \eqref{eqn-bar-Upsilon}. 

\medskip

Recall $\mfF^N_\ell(t)$ in \eqref{eqn-mfF}. Let 
\begin{align}
\bar{\mfF}^{N,0}_\ell (t) &:= N^{-1}  \sum_{\ell'=1}^L \sum_{j=1}^{I^N_{\ell'}(0)} \lambda^{0,\ell'}_j(\tilde{\tau}_{j,0}^{\ell',N}+t) {\bf1}_{X^{0, \ell'}_j(t) = \ell}  \,\,, \label{eqn-bar-mfFn-0} \\
\bar{\mfF}^{N,1}_\ell(t) &:= N^{-1} \sum_{\ell'=1}^L\sum_{i=1}^{A^N_{\ell'}(t)} \lambda_i^{\ell'}(t-\tau^{\ell',N}_{i}) {\bf1}_{X^{ \ell'}_i(t-\tau^{\ell',N}_{i}) = \ell}  \,\,. 
\label{eqn-bar-mfFn-1}
 \end{align}

  \begin{lemma} \label{lem-bar-mfFn-conv}
 Under Assumptions \ref{AS-initial} and \ref{AS-lambda}, along any convergent subsequence of  $\{\bar{A}^{N}_\ell\}$ with the limit $\{\bar{A}_{\ell}\}$ for each $\ell \in \sL$, 
 \begin{equation*}
\big( \bar{\mfF}^{N}_\ell \big)_{  \ell \in \sL}  \Rightarrow \big(\bar{\mfF}_\ell \big)_{  \ell \in \sL}  \qinq \bD^L \end{equation*}
 as $N\to\infty$, 
where $\bar{\mfF}_\ell = \bar{\mfF}^{0}_\ell  + \bar{\mfF}^{1}_\ell$ with 
\begin{equation} \label{eqn-bar-mfI-0-def}
\bar{\mfF}^{0}_\ell (t) := \sum_{\ell'=1}^L p_{\ell',\ell}(t) \int_0^{\infty} \bar{\lambda}(\mfa+t) \bar\mfI_{\ell'}(0,d\mfa), \quad t\ge 0,
\end{equation}
and
\begin{equation} \label{eqn-bar-mfI-1-def}
\bar{\mfF}^{1}_\ell (t) := \sum_{\ell'=1}^L   \int_0^t \bar{\lambda}(t-s) p_{\ell',\ell}(t-s)  d \bar{A}_\ell(s)\,, \quad t\ge 0. 
\end{equation}
 \end{lemma}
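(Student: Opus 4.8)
The plan is to treat $\bar{\mfF}^{N,0}_\ell$ and $\bar{\mfF}^{N,1}_\ell$ separately and then combine. For the initially-infected part $\bar{\mfF}^{N,0}_\ell$, I would condition on the $\sigma$-field generated by the initial data (the infection ages $\tilde\tau^{\ell',N}_{j,0}$ and the infectivity functions $\lambda^{0,\ell'}_j$). Given this data, the indicators ${\bf1}_{X^{0,\ell'}_j(t)=\ell}$ are independent across $j$ (the migration chains are independent and independent of everything else), so $\bar{\mfF}^{N,0}_\ell(t)$ is a normalized sum of independent terms with conditional mean $N^{-1}\sum_{\ell'}\sum_{j}\lambda^{0,\ell'}_j(\tilde\tau^{\ell',N}_{j,0}+t)\,p_{\ell',\ell}(t)$. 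A law of large numbers (applied to the empirical measure of the pairs $(\tilde\tau^{\ell',N}_{j,0},\lambda^{0,\ell'}_j)$, using Assumption \ref{AS-initial} for the convergence of $\bar\mfI^N_{\ell'}(0,\cdot)$ and the i.i.d.\ structure of the $\lambda^{0,\ell'}_j$ together with the conditional-distribution identity for $F_0$), combined with $\sup_t\lambda\le\lambda^*$ for the variance bound, should give pointwise convergence $\bar{\mfF}^{N,0}_\ell(t)\to \bar{\mfF}^0_\ell(t)$. Upgrading to convergence in $\bD$ requires a tightness/equicontinuity argument; since $\lambda$ is only c\`adl\`ag this is where I would invoke the weak-convergence machinery for $\bD$-valued processes recalled in the Appendix (Theorem \ref{thm-DD-conv0} as used elsewhere in the paper), or alternatively exploit monotonicity-type bounds coming from $\lambda\le\lambda^*$.

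For the newly-infected part $\bar{\mfF}^{N,1}_\ell$, the key is that we are working along a subsequence where $\bar A^N_{\ell'}\to\bar A_{\ell'}$, and by Lemma \ref{lem-An-conv} the limit $\bar A_{\ell'}$ is Lipschitz, hence absolutely continuous, so $d\bar A_{\ell'}(s)$ is a well-behaved measure. Writing $\bar{\mfF}^{N,1}_\ell(t)=N^{-1}\sum_{\ell'}\sum_{i=1}^{A^N_{\ell'}(t)}\lambda^{\ell'}_i(t-\tau^{\ell',N}_i){\bf1}_{X^{\ell'}_i(t-\tau^{\ell',N}_i)=\ell}$, I would again condition on $\sF^N_t$ (so that the $\tau^{\ell',N}_i$ and $A^N_{\ell'}$ are fixed) and use independence of the $(\lambda^{\ell'}_i,X^{\ell'}_i)$ pairs to replace, up to an $L^2$-martingale-difference error vanishing by $\lambda\le\lambda^*$ and Doob, each $\lambda^{\ell'}_i(t-\tau^{\ell',N}_i){\bf1}_{X^{\ell'}_i(\cdot)=\ell}$ by its mean $\bar\lambda(t-\tau^{\ell',N}_i)p_{\ell',\ell}(t-\tau^{\ell',N}_i)$. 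The resulting sum $N^{-1}\sum_{\ell'}\sum_{i=1}^{A^N_{\ell'}(t)}\bar\lambda(t-\tau^{\ell',N}_i)p_{\ell',\ell}(t-\tau^{\ell',N}_i)$ is exactly $\int_0^t\bar\lambda(t-s)p_{\ell',\ell}(t-s)\,d\bar A^N_{\ell'}(s)$ summed over $\ell'$, which converges to $\bar{\mfF}^1_\ell(t)=\sum_{\ell'}\int_0^t\bar\lambda(t-s)p_{\ell',\ell}(t-s)\,d\bar A_{\ell'}(s)$ by the convergence $\bar A^N_{\ell'}\to\bar A_{\ell'}$ along the subsequence — here I would use that $\bar\lambda$ (being bounded by $\lambda^*$ and a.e.\ the expectation of a c\`adl\`ag process) is Riemann-integrable and $p_{\ell',\ell}$ is continuous, so that $\int\phi\,d\bar A^N_{\ell'}\to\int\phi\,d\bar A_{\ell'}$ for such integrands; the uniform Lipschitz bound \eqref{eqn-barA-inc} also yields equicontinuity of $t\mapsto\bar{\mfF}^{N,1}_\ell(t)$ and hence tightness in $\bD$.

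Finally I would assemble: $\bar{\mfF}^N_\ell=\bar{\mfF}^{N,0}_\ell+\bar{\mfF}^{N,1}_\ell\Rightarrow\bar{\mfF}^0_\ell+\bar{\mfF}^1_\ell=\bar{\mfF}_\ell$ jointly over $\ell\in\sL$ in $\bD^L$, noting that the limits are continuous (continuity of $\bar{\mfF}^0_\ell$ and $\bar{\mfF}^1_\ell$ follows from dominated convergence using $\lambda^*$, the Lipschitz property of $\bar A_{\ell'}$, and continuity of $p_{\ell',\ell}$), so that convergence in $J_1$ to a continuous limit upgrades to local uniform convergence. The main obstacle I anticipate is \emph{not} the pointwise identification of the limit — that is a routine conditional-LLN computation — but rather establishing tightness in $\bD$ (equivalently, the modulus-of-continuity control) when the infectivity $\lambda$ is merely c\`adl\`ag and the infection ages $t-\tau^{\ell',N}_i$ sweep across its jump points; controlling the contribution of time instants where many individuals have nearly-equal infection age is the delicate point, and this is precisely why the paper set up the $\bD_\bD$-convergence criterion of Theorem \ref{thm-DD-conv0}, which I would lean on here, as is done in \cite{FPP2020b,PP-2021}.
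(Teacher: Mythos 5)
The paper does not give a self-contained proof of Lemma \ref{lem-bar-mfFn-conv}; it simply refers to Section~4 of \cite{FPP-2022}, where the argument is carried out via a ``propagation of chaos'' coupling in the spirit of \cite{sznitman1991topics}, with pointwise convergence established first and convergence in $\bD^L$ obtained afterwards via a modulus-of-continuity estimate. Your sketch proposes an alternative route (conditional LLN plus a tightness criterion), so the approaches are genuinely different, but I see two concrete difficulties in your version.

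First, the conditioning step for $\bar{\mfF}^{N,1}_\ell$ is not set up consistently. You propose to ``condition on $\sF^N_t$ (so that the $\tau^{\ell',N}_i$ and $A^N_{\ell'}$ are fixed) and use independence of the $(\lambda^{\ell'}_i,X^{\ell'}_i)$ pairs.'' But the filtration $\sF^N_t$ defined in Section~\ref{sec-proof-FLLN} already contains $\{\lambda^\ell_i(\cdot)\}_{i\ge 1}$ (and it must, because $\Upsilon^N_\ell$ is built from $\mfF^N_\ell$, which is built from the $\lambda$'s). Conditioning on $\sF^N_t$ therefore makes the $\lambda^{\ell'}_i$ deterministic; there is no residual randomness to average, and the step ``replace $\lambda^{\ell'}_i(\cdot)\,{\bf 1}_{X^{\ell'}_i(\cdot)=\ell}$ by its mean $\bar\lambda(\cdot)\,p_{\ell',\ell}(\cdot)$'' does not make sense under that conditioning. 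Conversely, if you instead try to condition only on the infection times $\{\tau^{\ell',N}_i\}$, those are not measurable with respect to a $\sigma$-field independent of the $\lambda$'s: $\tau^{\ell',N}_i$ depends, through the aggregate force of infection, on the infectivity functions and migration chains of the previously infected individuals, so the requisite independence is far from automatic. This feedback loop is precisely what the propagation-of-chaos/martingale-difference machinery in \cite{FPP-2022} is designed to decouple (by ordering the infection events and building a filtration that adjoins the $i$-th individual's $(\lambda^{\ell'}_i,X^{\ell'}_i)$ only at the moment of its infection, together with a careful treatment of the random upper summation limit $A^N_{\ell'}(t)$). Your sketch does not isolate this issue, and as written the core decoupling step is missing.

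Second, the tightness argument you gesture at is not the right tool. Theorem~\ref{thm-DD-conv0} is a criterion for \emph{two-parameter} processes in $\bD_\bD$ and is used in this paper for quantities indexed by $(t,\mfa)$ such as $\bar{M}^N_{\mfI,\ell,\ell'}(t,\mfa)$; the process $\bar\mfF^N_\ell(t)$ is one-parameter, and its tightness in $\bD$ requires a standard (one-parameter) increment estimate. What the paper actually invokes for this step is the modulus-of-continuity bound in the first part of Subsection~4.5 of \cite{FPP-2022}; the point of the remark following the lemma is precisely that the propagation-of-chaos approach yields this under only $\lambda(\cdot)\in\bD$ a.s.\ and $\sup_t\lambda(t)\le\lambda^\ast$, avoiding the stronger regularity hypotheses of \cite{FPP2020b}. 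Your instinct that the c\`adl\`ag nature of $\lambda$ makes the modulus-of-continuity control the delicate part is correct, but the mechanism you propose to resolve it is not the one the paper uses and is not obviously adequate without the decoupling flagged above.

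Your treatment of $\bar{\mfF}^{N,0}_\ell$ is closer to being right (there conditioning on the initial data and the $\lambda^{0,\ell'}_j$ and averaging over the migration chains $X^{0,\ell'}_j$, which are genuinely independent of everything else, is a legitimate move), and the identification of the limit via $\breve\mfF^{N,1}_\ell=\sum_{\ell'}\int_0^t\bar\lambda(t-s)p_{\ell',\ell}(t-s)\,d\bar A^N_{\ell'}(s)$ together with Lemma~\ref{le:Portmanteau} and the Lipschitz bound \eqref{eqn-barA-inc} is consistent with what one would do after the centering has been justified. So the skeleton and the target formulas are correct; the gap is the decoupling that justifies replacing $\lambda^{\ell'}_i\,{\bf 1}_{X^{\ell'}_i(\cdot)=\ell}$ by $\bar\lambda\,p_{\ell',\ell}$ in the presence of the infection-time feedback, and the paper punts exactly that work to \cite{FPP-2022}.
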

 
 Note that the limit $\bar{\mfF}_\ell$ is not yet the same as that given in \eqref{eqn-bar-mfF} since $\bar{A}_{\ell'}(ds)$ in  \eqref{eqn-bar-mfI-1-def} remains to be identified as $\bar\Upsilon_{\ell'}(s)ds$. So we are abusing the notation to use  $\bar{\mfF}_\ell$ in this lemma.  The proof of this lemma follows from a slight modification of the proof approach in \cite{FPP-2022} to take into account the difference in the initial condition, which is omitted for brevity.  The pointwise convergence is part of the proof of the crucial Lemma 4.3 in \cite{FPP-2022}, and the convergence in $\bD^L$ is then obtained in the first part of subsection 4.5. We remark that the approach  in Section 4 of \cite{FPP-2022} uses an argument adopted from the ``propagation of chaos" for interacting particle systems \cite{sznitman1991topics} which requires only the conditions $\lambda(\cdot) \in \bD$ a.s. and $\sup_{t \ge0}  \lambda(t)\le \lambda^\ast$, instead of the regularity conditions as stated in Assumption 2.1 in \cite{FPP2020b}.

\subsection{Convergence of $(\bar{S}^N_\ell, \bar{\mfI}^N_\ell, \bar{R}^N_\ell)_{\ell\in\sL}$} 

We first have the following representations of the LLN-scaled processes, from \eqref{eqn-mfF}--\eqref{eqn-mfI-N-new}:
\begin{align} \label{eqn-bar-Sn}
\bar{S}^{N}_{\ell}(t) &= \bar{S}^{N}_{\ell}(0) - \bar{A}^N_\ell(t) 
+ \sum_{\ell'=1}^L \big( \bar{M}^{N}_{S, \ell', \ell} (t) - \bar{M}^{N}_{S,\ell, \ell'} (t)\big)  \non\\
& \qquad + \sum_{\ell'=1}^L \bigg( \nu_{\ell',\ell}^{S} \int_0^t \bar{S}^{N}_{\ell'}(s)ds- \nu_{\ell,\ell'}^{S} \int_0^t \bar{S}^{N}_{\ell}(s)ds \bigg),
\end{align}
\begin{align} \label{eqn-bar-mfI-N} 
\bar\mfI^N_\ell(t, \mfa) 
&  = \bar\mfI^N_{\ell}(0,(\mfa-t)^+ ) - \bar\mfI^{N,0}_\ell(t, \mfa)    + \bar{A}^N_\ell(t) - \bar{A}^N_\ell((t-\mfa)^+) - \bar\mfI^{N,1}_\ell(t, \mfa)  \non\\
& \qquad  + \sum_{\ell'=1}^L \big( \bar{M}^{N}_{\mfI, \ell', \ell} (t,\mfa) - \bar{M}^{N}_{\mfI,\ell, \ell'} (t,\mfa)\big)    \\
& \qquad + \sum_{\ell'=1}^L \bigg( \nu_{\ell',\ell}^{I} \int_{(t-\mfa)^+}^t \bar\mfI^N_{\ell'}(s, \mfa-(t-s)) ds- \nu_{\ell,\ell'}^{I} \int_{(t-\mfa)^+}^t \bar\mfI^N_{\ell}(s, \mfa-(t-s)) ds \bigg), \non
\end{align}
\begin{align} \label{eqn-bar-Rn} 
\bar{R}^N_\ell(t)
& =  \bar{R}^N_\ell(0)+  \bar{R}^{N,0}_\ell(t) + \bar{R}^{N,1}_\ell(t)  + \sum_{\ell'=1}^L \big( \bar{M}^{N}_{R, \ell', \ell} (t) - \bar{M}^{N}_{R,\ell, \ell'} (t)\big)  \non \\
& \qquad + \sum_{\ell'=1}^L \bigg( \nu_{\ell',\ell}^{R} \int_0^t \bar{R}^{N}_{\ell'}(s)ds- \nu_{\ell,\ell'}^{R} \int_0^t \bar{R}^{N}_{\ell}(s)ds \bigg),
\end{align} 
and
\begin{align}
\bar\Upsilon^N_\ell(t)  =  \frac{\bar{S}_\ell^N(t)}{(\bar{B}_\ell^N(t))^\gamma} \sum_{\ell'=1}^L \beta_{\ell \ell'}\bar\mfF^N_{\ell'}(t)\,,
\end{align} 
where  $\bar{B}_\ell^N(t) = \bar{S}_\ell^N(t) + \bar{I}_\ell^N(t) + \bar{S}_\ell^N(t)$ with $\bar{I}^N_\ell(t) = \bar\mfI^N_\ell(t, \infty)$, 
$\bar{M}^{N}_{A,\ell} (t)$ is given in \eqref{eqn-MAn}, 
\begin{align*} 
\bar{M}^{N}_{Z, \ell, \ell'} (t) &= \frac{1}{N} \left( P^Z_{\ell,\ell'}\left( \nu^{Z}_{\ell, \ell'} \int_0^t Z^N_{\ell}(s) ds\right) -  \nu^{Z}_{\ell, \ell'} \int_0^t Z^N_{\ell}(s) ds   \right)\,, \quad Z=S, R, \\
\bar{M}^{N}_{\mfI, \ell, \ell'} (t,\mfa) &= \frac{1}{N} \left(\int_{(t-\mfa)^+}^t\int_0^\infty{\bf1}_{u\le \nu^{I}_{\ell, \ell'}\mfI^N_{\ell}(s, \mfa-(t-s))}Q^I_{\ell,\ell'}(ds,du)   -  \nu^{I}_{\ell, \ell'} \int_{(t-\mfa)^+}^t \mfI^N_{\ell}(s, \mfa-(t-s))  ds   \right)\,, 
\end{align*}
\begin{align*}
\bar\mfI^{N,0}_\ell(t, \mfa) & = \frac{1}{N}  \sum_{\ell'=1}^L  \sum_{j=1}^{\mfI^N_{\ell'}(0,(\mfa-t)^+ )} {\bf1}_{\eta^{0,\ell'}_j \le t} {\bf1}_{X_j^{0, \ell'}(\eta^{0,\ell'}_j ) = \ell}\,,\\
\bar\mfI^{N,1}_\ell(t, \mfa) & = \frac{1}{N} \sum_{\ell'=1}^L \sum_{i=A^N_{\ell'}((t-\mfa)^+)+1}^{A^N_{\ell'}(t)} {\bf1}_{\tau_{i}^{\ell',N}+\eta^{\ell'}_i \le t}  {\bf1}_{X_i^{\ell'}(\eta^{\ell'}_i) = \ell}\,. 
\end{align*}  
\begin{align*}
\bar{R}^{N,0}_\ell(t) &= \frac{1}{N} \sum_{\ell'=1}^L \sum_{j=1}^{I^N_{\ell'}(0)} {\bf1}_{\eta^{0,\ell'}_j \le t}{\bf1}_{X_j^{0, \ell'}(\eta^{0,\ell'}_j) = \ell} \, =\,\bar\mfI^{N,0}_\ell(t, \infty)  \,,\\
 \bar{R}^{N,1}_\ell(t) &= \frac{1}{N}   \sum_{\ell'=1}^L \sum_{i=1}^{A^N_{\ell'}(t)} {\bf1}_{\tau_{i}^{\ell',N}+\eta^{\ell'}_i \le t}  {\bf1}_{X_i^{\ell'}(\eta^{\ell'}_i) = \ell} \, =\,  \bar\mfI^{N,1}_\ell(t, \infty) \,. 
\end{align*}

\begin{lemma} \label{lem-barM-conv}
Under Assumptions \ref{AS-initial} and \ref{AS-lambda}, for each $\ell, \ell'\in \sL$, 
\begin{align}
\big(\bar{M}^{N}_{S, \ell, \ell'} (t), \bar{M}^{N}_{\mfI, \ell, \ell'} (t,\mfa), \bar{M}^{N}_{R, \ell, \ell'} (t) \big) \to 0 
\end{align}
in probability, uniformly in $t$ and $\mfa$, as $N\to\infty$. 
\end{lemma}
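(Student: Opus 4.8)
The plan is to show each of the three rescaled compensated processes is a square-integrable martingale whose predictable quadratic variation vanishes in $L^1$, and then invoke Doob's $L^2$ maximal inequality exactly as was done for $\bar M^N_{A,\ell}$ in \eqref{eqn-barMn-conv}. The key point throughout is that every compensator is controlled by a deterministic bound, because all the underlying counting processes ($S^N_\ell$, $R^N_\ell$, $\mfI^N_\ell(\cdot,\cdot)$) are bounded by $N$.

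\emph{Step 1: the $Z=S,R$ cases.} For fixed $\ell,\ell'$, the process $t\mapsto P^Z_{\ell,\ell'}(\nu^Z_{\ell,\ell'}\int_0^t Z^N_\ell(s)\,ds) - \nu^Z_{\ell,\ell'}\int_0^t Z^N_\ell(s)\,ds$ is a martingale (w.r.t.\ the natural filtration) with predictable quadratic variation $\nu^Z_{\ell,\ell'}\int_0^t Z^N_\ell(s)\,ds$; this is standard for a rate-$1$ Poisson process evaluated at an absolutely continuous, adapted time change. Hence $\langle \bar M^N_{Z,\ell,\ell'}\rangle(t) = N^{-2}\nu^Z_{\ell,\ell'}\int_0^t Z^N_\ell(s)\,ds \le N^{-1}\nu^Z_{\ell,\ell'}\,t$, since $0\le Z^N_\ell(s)\le N$. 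By Doob's inequality, $\E\big[\sup_{t\le T}|\bar M^N_{Z,\ell,\ell'}(t)|^2\big]\le 4N^{-1}\nu^Z_{\ell,\ell'}\,T\to 0$, which gives the claimed uniform-on-compacts convergence to $0$ in probability.

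\emph{Step 2: the $\mfI$ case.} Here I would first rewrite $\bar M^N_{\mfI,\ell,\ell'}(t,\mfa)$ in terms of a stochastic integral against the compensated PRM $\overline{Q}^I_{\ell,\ell'}(ds,du)=Q^I_{\ell,\ell'}(ds,du)-ds\,du$, namely
\begin{align*}
\bar M^N_{\mfI,\ell,\ell'}(t,\mfa) = \frac{1}{N}\int_{(t-\mfa)^+}^t\int_0^\infty \mathbf{1}_{u\le \nu^I_{\ell,\ell'}\mfI^N_\ell(s,\mfa-(t-s))}\,\overline{Q}^I_{\ell,\ell'}(ds,du).
\end{align*}
For this to be a martingale one must be careful that the time argument $t$ appears \emph{inside} the integrand through $\mfa-(t-s)$; the clean way around this is to fix $\mfa$ and introduce the auxiliary two-parameter family and note that, for each fixed pair $(t,\mfa)$, the integrand on $s\in[(t-\mfa)^+,t]$ is predictable, so the isometry gives $\E\big[|\bar M^N_{\mfI,\ell,\ell'}(t,\mfa)|^2\big] = N^{-2}\nu^I_{\ell,\ell'}\E\int_{(t-\mfa)^+}^t \mfI^N_\ell(s,\mfa-(t-s))\,ds \le N^{-1}\nu^I_{\ell,\ell'}\,t$, using $0\le \mfI^N_\ell(s,\cdot)\le I^N_\ell(s)\le N$. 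This controls pointwise second moments uniformly in $\mfa$; a maximal inequality in both $t$ and $\mfa$ then upgrades this to the uniform statement.

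\emph{Main obstacle.} The genuinely delicate step is obtaining a \emph{uniform-in-$(t,\mfa)$} maximal bound for $\bar M^N_{\mfI,\ell,\ell'}$, since this object lives in $\bD_\bD$ and the elementary Doob inequality only directly controls the supremum over $t$ for each fixed $\mfa$. I would handle this by fixing a horizon $T$, observing that on $\{t\le T\}$ the relevant range of $\mfa$ is effectively $[0,T]$ (for $\mfa\ge T$ the integral over $[(t-\mfa)^+,t]=[0,t]$ no longer depends on $\mfa$), reparametrising via $v=\mfa-(t-s)$ so that the integrand becomes $\mathbf{1}_{u\le\nu^I_{\ell,\ell'}\mfI^N_\ell(t-\mfa+v,v)}$ on $v\in[0,\mfa\wedge t]$, and then bounding the double supremum by a sum/chaining over a mesh in $(t,\mfa)$ together with the $\alpha$-H\"older regularity of $\mfa\mapsto\bar\mfI_\ell(0,\mfa)$ from Assumption~\ref{AS-initial} and the bound \eqref{eqn-barA-inc} to control increments of $\mfI^N_\ell$ in its age argument between mesh points. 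Alternatively, one can avoid chaining entirely by noting that it suffices for the subsequent arguments to have convergence for each fixed $\mfa$ uniformly in $t$ plus equicontinuity of $\mfa\mapsto\bar M^N_{\mfI,\ell,\ell'}(\cdot,\mfa)$, the latter following from the same increment estimates; either route reduces the problem to the one-parameter Doob bound already established in Step~1.
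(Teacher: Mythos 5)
Your proposal is correct and follows essentially the same route as the paper. Step~1 matches the paper's argument verbatim: Doob's $L^2$ inequality applied to the rescaled martingales $\bar M^N_{Z,\ell,\ell'}$ for $Z=S,R$ with the quadratic variation bound $N^{-1}\nu^Z_{\ell,\ell'}t$. For Step~2, your pointwise second-moment bound via the compensated-PRM isometry is exactly the paper's verification of condition~(i) of Theorem~\ref{thm-DD-conv0}, and the ``main obstacle'' you identify --- upgrading to uniformity over $(t,\mfa)$ --- is precisely what that theorem's condition~(ii) is designed to encode: a modulus-of-continuity estimate in each parameter, normalised by $1/\delta$, uniformly in the other parameter. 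The tools you invoke (the $\alpha$-H\"older regularity of $\mfa\mapsto\bar\mfI(0,\mfa)$ from Assumption~\ref{AS-initial}, the increment bound \eqref{eqn-barA-inc} for $\bar A^N$, Doob again for the PRM stochastic integrals, and a mesh/chaining reduction for the $\mfa$-supremum) are the ones the paper actually deploys. The only thing your sketch leaves implicit is the three-way decomposition of the increment $\bar M^N_{\mfI,\ell,\ell'}(t+w,\mfa)-\bar M^N_{\mfI,\ell,\ell'}(t,\mfa)$ according to the three ways the integration domain and integrand change, which the paper carries out in full because the third piece (where the integration region is unchanged but the threshold $\mfI^N_\ell(s,\cdot)$ shifts) is the genuinely delicate one and is where the H\"older assumption and the Billingsley modulus-of-continuity corollary are actually used. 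So: right ideas, same approach, with the detailed increment estimates left as a sketch.
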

\begin{proof}
The process $\bar{M}^{N}_{S, \ell, \ell'} (t)$  is a square-integrable martingale with respect to the filtration:
\begin{align*}
\mathcal{F}^N_{S}(t) =\sF^N_t  \vee \sigma \bigg\{  P^S_{\ell,\ell'}\left( \nu^{S}_{\ell, \ell'} \int_0^{t'} S^N_{\ell}(s) ds\right):  t' \le t,\,   \ell,\ell' \in \sL \bigg\}, 
\end{align*}
with quadratic variation
\[
\langle \bar{M}^{N}_{S, \ell, \ell'} \rangle(t)= \frac{1}{N}\nu^{S}_{\ell, \ell'} \int_0^{t} \bar{S}^N_{\ell}(s) ds\le  \frac{1}{N}\nu^{S}_{\ell, \ell'} \sum_{\ell\in\sL} \bar{S}^N_{\ell}(0) t \,, 
\]
which converges to zero in probability as $N\to \infty$. This implies that $\bar{M}^{N}_{S, \ell, \ell'}(t) \to0$ locally uniformly in $t$ in probability as $N\to \infty$. Similarly for   $\bar{M}^{N}_{R, \ell, \ell'} \to0$. 

We next prove the convergence of $\bar{M}^{N}_{\mfI, \ell, \ell'} (t,\mfa)$.
We apply Theorem \ref{thm-DD-conv0}. 
First, for each $t, \mfa \ge 0$, 
\begin{align*}
\E\left[\left(\bar{M}^{N}_{\mfI, \ell, \ell'} (t,\mfa)\right)^2 \right] & =  \frac{1}{N}  \nu^{I}_{\ell, \ell'} \E\int_{(t-\mfa)^+}^t \bar\mfI^N_{\ell}(s, \mfa-(t-s))  ds\,. 
\end{align*}
Observe that by \eqref{eqn-mfI-N-new}, for each $\ell$, and for each $t, \mfa \ge0$, 
\begin{align} \label{eqn-bar-mfI-n-bound}
\bar\mfI^N_{\ell}(t, \mfa) \le   \sum_{\ell'\in \sL}\Big(  \bar\mfI^N_{\ell'}(0,(\mfa-t)^+ ) + \bar{A}^N_{\ell'}(t) - \bar{A}^N_{\ell'}((t-\mfa)^+) \Big)\,. 
\end{align}
So  for $(t-\mfa)^+ < s< t$, we have
\[
\bar\mfI^N_{\ell}(s, \mfa-(t-s))  \le   \sum_{\ell'\in \sL}\Big(  \bar\mfI^N_{\ell'}(0,(\mfa-t)^+ ) + \bar{A}^N_{\ell'}(s) - \bar{A}^N_{\ell'}((t-\mfa)^+) \Big)\,. 
\]
Hence 
\begin{align*}
\E\left[\left(\bar{M}^{N}_{\mfI, \ell, \ell'} (t,\mfa)\right)^2 \right] 
& \le  \frac{1}{N}  \nu^{I}_{\ell, \ell'} \E\int_{(t-\mfa)^+}^t  \sum_{\ell'\in \sL}\Big(  \bar\mfI^N_{\ell'}(0,(\mfa-t)^+ ) + \bar{A}^N_{\ell'}(s) - \bar{A}^N_{\ell'}((t-\mfa)^+) \Big) ds \\
& \le   \frac{1}{N}  \nu^{I}_{\ell, \ell'} \mfa \, \E \sum_{\ell'\in \sL}\Big(  \bar\mfI^N_{\ell'}(0,(\mfa-t)^+ ) + \bar{A}^N_{\ell'}(t) - \bar{A}^N_{\ell'}((t-\mfa)^+) \Big) \\
&\le \frac{\nu^I_{\ell,\ell'}\mfa}{N}\\
& \to 0 \qasq N \to\infty\,.
\end{align*}
Thus by Markov's inequality, 
 for any $\ep>0$, 
 \[  \sup_{t \in [0,T]}\sup_{\mfa\in [0,T']} \P \big(  \big|\bar{M}^{N}_{\mfI, \ell, \ell'} (t,\mfa)\big|> \ep \big) \to 0\] 
 as $N\to\infty$.

Then, we check the two requirements of condition (ii) in Theorem \ref{thm-DD-conv0}. 
 For the first one, we show that for $\ep>0$, as $\delta\to 0$, 
\begin{align} \label{eqn-mfI-diff-c1}
\limsup_N \sup_{t \in [0, T]} \frac{1}{\delta} \P \left(\sup_{w\in [0,\delta]} \sup_{\mfa \in [0,T']} \big| \bar{M}^{N}_{\mfI, \ell, \ell'} (t+w,\mfa) - \bar{M}^{N}_{\mfI, \ell, \ell'} (t,\mfa)\big| > \epsilon \right) \to 0. 
\end{align} 
We have
\begin{align}\label{eqn-mfI-diff-p1}
& \Big|\bar{M}^{N}_{\mfI, \ell, \ell'} (t+w,\mfa) - \bar{M}^{N}_{\mfI, \ell, \ell'} (t,\mfa)\Big| \non \\
&\le\frac{1}{N} \Bigg|   \int_{t}^{t+w}\int_0^\infty{\bf1}_{u\le \nu^{I}_{\ell, \ell'}\mfI^N_{\ell}(s, \mfa-(t+w-s))}\bar{Q}^I_{\ell,\ell'}(ds,du)\Bigg|\non \\  
&\quad+\frac{1}{N} \Bigg|  \int_{(t-\mfa)^+}^{(t+w-\mfa)^+}\int_0^\infty{\bf1}_{u\le \nu^{I}_{\ell, \ell'}\mfI^N_{\ell}(s, \mfa-(t+w-s))}\bar{Q}^I_{\ell,\ell'}(ds,du)\Bigg|\non \\ 
&\quad+\frac{1}{N} \Bigg|  \int_{(t-\mfa)^+}^t\int_0^\infty{\bf1}_{ \nu^{I}_{\ell, \ell'}\mfI^N_{\ell}(s, \mfa-(t+w-s))<u\le \nu^{I}_{\ell, \ell'}\mfI^N_{\ell}(s, \mfa-(t-s))}\bar{Q}^I_{\ell,\ell'}(ds,du)\Bigg|\,.
\end{align}
The first term on the right of \eqref{eqn-mfI-diff-p1} satisfies
\begin{align*}
\frac{1}{N} \Bigg|\int_{t}^{t+w}\int_0^\infty{\bf1}_{u\le \nu^{I}_{\ell, \ell'}\mfI^N_{\ell}(s, \mfa-(t+w-s))}\bar{Q}^I_{\ell,\ell'}(ds,du)\Bigg|&\le\frac{1}{N}\int_{t}^{t+w}\int_0^\infty{\bf1}_{u\le \nu^{I}_{\ell, \ell'}\mfI^N_{\ell}(s, \mfa-(t+w-s))}Q^I_{\ell,\ell'}(ds,du)\\&\quad+\nu^{I}_{\ell, \ell'}\int_{t}^{t+w}\bar\mfI^N_{\ell}(s, \mfa-(t+w-s))ds\,.
\end{align*}
Hence, by the fact that $\mfI^N_{\ell}(s, \mfa)$ is increasing in $\mfa$ and $\mfI^N_{\ell}(s, \infty) \le 1$, we obtain 
\begin{align*}
\sup_{w\in[0,\delta], \mfa\in[0, T']}\frac{1}{N}& \Bigg|\int_{t}^{t+w}\int_0^\infty{\bf1}_{u\le \nu^{I}_{\ell, \ell'}\mfI^N_{\ell}(s, \mfa-(t+w-s))}\bar{Q}^I_{\ell,\ell'}(ds,du)\Bigg| \\
&\le\sup_{\mfa\in[0, T']}\frac{1}{N}\int_{t}^{t+\delta}\int_0^\infty{\bf1}_{u\le \nu^{I}_{\ell, \ell'}\mfI^N_{\ell}(s, \mfa-(t-s))}Q^I_{\ell,\ell'}(ds,du)+\nu^{I}_{\ell, \ell'}\delta\\
&\le \frac{1}{N}\left|\int_{t}^{t+\delta}\int_0^\infty{\bf1}_{u\le \nu^{I}_{\ell, \ell'}\mfI^N_{\ell}(s, \infty)}\bar{Q}^I_{\ell,\ell'}(ds,du)\right|+2\nu^{I}_{\ell, \ell'}\delta\,,
\end{align*}
For the first term on the right hand side, we have
\[
\E\bigg[\bigg(  \frac{1}{N}\left|\int_{t}^{t+\delta}\int_0^\infty{\bf1}_{u\le \nu^{I}_{\ell, \ell'}\mfI^N_{\ell}(s, \infty)}\bar{Q}^I_{\ell,\ell'}(ds,du)\right|\bigg)^2 \bigg] \le \frac{1}{N}  \nu^{I}_{\ell, \ell'} \delta. 
\]
Thus, 
\begin{align*}
\limsup_N &\sup_{t\in [0,T]} \frac{1}{\delta}\P\left(\sup_{w\in[0,\delta], \mfa\in[0, T']}\frac{1}{N} \Bigg|\int_{t}^{t+w}\int_0^\infty{\bf1}_{u\le \nu^{I}_{\ell, \ell'}\mfI^N_{\ell}(s, \mfa-(t+w-s))}\bar{Q}^I_{\ell,\ell'}(ds,du)\Bigg|>\epsilon\right)\non\\
&\le\epsilon^{-2}\limsup_N \frac{1}{\delta} \E\left(\sup_{w\in[0,\delta], \mfa\in[0, T']}\frac{1}{N^2}\left| \int_{t}^{t+w}\int_0^\infty{\bf1}_{u\le \nu^{I}_{\ell, \ell'}\mfI^N_{\ell}(s, \mfa-(t+w-s))}\bar{Q}^I_{\ell,\ell'}(ds,du)\right|^2\right)\non\\
& \le \epsilon^{-2}\limsup_N \frac{1}{\delta}  \Big( 2  \frac{1}{N}  \nu^{I}_{\ell, \ell'} \delta +   8(\nu^I_{\ell,\ell'})^2\delta^2 \Big) \non \\
&=   \epsilon^{-2} 8(\nu^I_{\ell,\ell'})^2\delta\,,
\end{align*}
which tends to $0$ as $\delta\to0$, as required by \eqref{eqn-mfI-diff-c1}.

For the second term on the right of \eqref{eqn-mfI-diff-p1},  we have
\begin{align*}
&\sup_{w\in[0,\delta],\mfa\in[0,T']}\frac{1}{N}\left|
\int_{(t-\mfa)^+}^{(t+w-\mfa)^+}\int_0^\infty {\bf1}_{u\le\nu^I_{\ell,\ell'}\mfI^N_\ell(s,\mfa-t-w+s)}\bar{Q}^I_{\ell,\ell'}(ds,du)\right|
\\&\le\sup_{w\in[0,\delta],\mfa\in[0,T']}\frac{1}{N}
\int_{(t-\mfa)^+}^{(t+w-\mfa)^+}\int_0^\infty {\bf1}_{u\le\nu^I_{\ell,\ell'}\mfI^N_\ell(s,\mfa-t-w+s)}{Q}^I_{\ell,\ell'}(ds,du)\\
&\quad+\sup_{w\in[0,\delta],\mfa\in[0,T']}\int_{(t-\mfa)^+}^{(t+w-\mfa)^+}\nu^I_{\ell,\ell'}\bar\mfI^N_\ell(s,\mfa-t-w+s)ds\\
&\le\sup_{\mfa\in[0,T']}\frac{1}{N}
\int_{(t-\mfa)^+}^{(t+\delta-\mfa)^+}\int_0^\infty {\bf1}_{u\le\nu^I_{\ell,\ell'}\mfI^N_\ell(s,\infty)}{Q}^I_{\ell,\ell'}(ds,du)\\
&\quad+\sup_{\mfa\in[0,T']}\int_{(t-\mfa)^+}^{(t+\delta-\mfa)^+}\nu^I_{\ell,\ell'}\bar\mfI^N_\ell(s,\infty)ds\\
&\le\sup_{\mfa\in[0,T']}\frac{1}{N}\left|\int_{(t-\mfa)^+}^{(t+\delta-\mfa)^+}\int_0^\infty {\bf1}_{u\le\nu^I_{\ell,\ell'}\mfI^N_\ell(s,\infty)}\bar{Q}^I_{\ell,\ell'}(ds,du)\right|\\
&\quad+2\sup_{\mfa\in[0,T']}\int_{(t-\mfa)^+}^{(t+\delta-\mfa)^+}\nu^I_{\ell,\ell'}\bar\mfI^N_\ell(s,\infty)ds
\end{align*}
The second term on the last right hand side is bounded by
$2\nu^I_{\ell,\ell'}\delta$, 
while the first term is bounded by
\begin{align*}
&\sup_{\mfa\in[0,T']}\frac{1}{N}\left|\int_0^{(t-\mfa)^+}\int_0^\infty {\bf1}_{u\le\nu^I_{\ell,\ell'}\mfI^N_\ell(s,+\infty)}\bar{Q}^I_{\ell,\ell'}(ds,du)\right|\\
&\quad+\sup_{\mfa\in[0,T']}\frac{1}{N}\left|\int_0^{(t+\delta-\mfa)^+}\int_0^\infty {\bf1}_{u\le\nu^I_{\ell,\ell'}\mfI^N_\ell(s,+\infty)}\bar{Q}^I_{\ell,\ell'}(ds,du)\right|\,.
\end{align*}
Each of the two terms in this sum is bounded by
\[\sup_{0\le r\le t+\delta}\frac{1}{N}\left|\int_0^{r}\int_0^\infty {\bf1}_{u\le\nu^I_{\ell,\ell'}\mfI^N_\ell(s,+\infty)}\bar{Q}^I_{\ell,\ell'}(ds,du)\right|,\]
which is the sup of a square integrable martingale. It follows from Doob's inequality that
\begin{align*}
\mathbb{E}\left( \sup_{0\le r\le t+\delta}\left|\frac{1}{N}\int_0^{r}\int_0^\infty {\bf1}_{u\le\nu^I_{\ell,\ell'}\mfI^N_\ell(s,+\infty)}\bar{Q}^I_{\ell,\ell'}(ds,du)\right|^2\right)
&\le\frac{4\nu^I_{\ell,\ell'}}{N}\mathbb{E}\int_0^{t+\delta}\bar\mfI^N_\ell(s,+\infty)ds\\
&\le\frac{4\nu^I_{\ell,\ell'}}{N}(t+\delta)\,.
\end{align*}
Thus we obtain
\begin{align*}
\limsup_N\sup_{t\in [0,T]}\frac{1}{\delta}\P&\left(\sup_{w\in[0,\delta],\mfa\in[0,T']}\frac{1}{N}\left|
\int_{(t-\mfa)^+}^{(t+w-\mfa)^+}\int_0^\infty {\bf1}_{u\le\nu^I_{\ell,\ell'}\mfI^N_\ell(s,\mfa-t-w+s)}\bar{Q}^I_{\ell,\ell'}(ds,du)\right|>\epsilon\right)\non\\
& \le \epsilon^{-2}\limsup_N \frac{1}{\delta}  \Big( \frac{16\nu^I_{\ell,\ell'}}{N}(T+\delta) +   8(\nu^I_{\ell,\ell'})^2\delta^2 \Big) \non \\
&=   \epsilon^{-2} 8(\nu^I_{\ell,\ell'})^2\delta\,,
\end{align*}
which converges to 0 as $\delta \to 0$, as required by \eqref{eqn-mfI-diff-c1}.

We finally consider the third term  on the right of \eqref{eqn-mfI-diff-p1}. 
We have
\begin{align} \label{eqn-mfI-diff-p1-3} 
\sup_{w\in[0,\delta], \mfa\in[0, T']}\frac{1}{N}& \Bigg|  \int_{(t-\mfa)^+}^t\int_0^\infty{\bf1}_{ \nu^{I}_{\ell, \ell'}\mfI^N_{\ell}(s, \mfa-(t+w-s))<u\le \nu^{I}_{\ell, \ell'}\mfI^N_{\ell}(s, \mfa-(t-s))}\bar{Q}^I_{\ell,\ell'}(ds,du)\Bigg| \nonumber\\
&\le\sup_{\mfa\in[0, T']}\frac{1}{N}  \int_0^t\int_0^\infty{\bf1}_{ \nu^{I}_{\ell, \ell'}\mfI^N_{\ell}(s, \mfa-(t+\delta-s))<u\le \nu^{I}_{\ell, \ell'}\mfI^N_{\ell}(s, \mfa-(t-s))}Q^I_{\ell,\ell'}(ds,du) \nonumber\\
&\quad+\nu^{I}_{\ell, \ell'}\sup_{\mfa\in[0, T']}\int_0^t\left(\bar\mfI^N_{\ell}(s, \mfa-(t-s))-\bar\mfI^N_{\ell}(s, \mfa-(t+\delta-s))\right)ds \nonumber\\
&\le\sup_{\mfa\in[0, T']}\frac{1}{N}  \left|\int_0^t\int_0^\infty{\bf1}_{ \nu^{I}_{\ell, \ell'}\mfI^N_{\ell}(s, \mfa-(t+\delta-s))<u\le \nu^{I}_{\ell, \ell'}\mfI^N_{\ell}(s, \mfa-(t-s))}\bar{Q}^I_{\ell,\ell'}(ds,du)\right| \nonumber\\
&\quad+2\nu^{I}_{\ell, \ell'}\sup_{\mfa\in[0, T']}\int_0^t\left(\bar\mfI^N_{\ell}(s, \mfa-(t-s))-\bar\mfI^N_{\ell}(s, \mfa-(t+\delta-s))\right)ds\,.
\end{align}

Let us consider the second term. For that sake, we first upper bound the integrand in the $ds$ integral for each fixed
$s$ and $\mfa$. If  $\mfa > t+\delta$, then
\begin{align*}
\bar\mfI^N_{\ell}(s, \mfa-(t-s))-\bar\mfI^N_{\ell}(s, \mfa-(t+\delta-s))\le
\sum_{\ell'}\left(\bar\mfI^N_{\ell'}(0,(\mfa-t)^+)-\bar\mfI^N_{\ell'}(0,(\mfa-t-\delta)^+\right)\,. 
\end{align*}
 If $\mfa<t$, then
 \begin{align*}
 \bar\mfI^N_{\ell}(s, \mfa-(t-s))-\bar\mfI^N_{\ell}(s, \mfa-(t+\delta-s))\le
\sum_{\ell'}\left(\bar{A}^N_{\ell'}((t+\delta-\mfa)^+)- \bar{A}^N_{\ell'}((t-\mfa)^+)\right)\,.
 \end{align*}
Finally, if $t<\mfa<t+\delta$, then
\begin{align*}
 \bar\mfI^N_{\ell}(s, \mfa-(t-s))-\bar\mfI^N_{\ell}(s, \mfa-(t+\delta-s))&\le
\sum_{\ell'}\left(\bar\mfI^N_{\ell'}(0,(\mfa-t)^+)+\bar{A}^N_{\ell'}((t+\delta-\mfa)^+)\right),
 \end{align*}
 which is upper bounded by the sum of the two above right hand sides.
Finally, the second term in the above upper bound is bounded from above by 
\begin{align} \label{eqn-mfI-diff-p31}
2\nu^{I}_{\ell, \ell'}t
\sup_{\mfa\in[0, T']} &\Bigg\{\sum_{\ell'}\left(\bar\mfI^N_{\ell'}(0,(\mfa-t)^+)-\bar\mfI^N_{\ell'}(0,(\mfa-t-\delta)^+\right) \nonumber\\
&\quad +
\sum_{\ell'}\left(\bar{A}^N_{\ell'}((t+\delta-\mfa)^+)- \bar{A}^N_{\ell'}((t-\mfa)^+)\right)\Bigg\}\,.
\end{align}
\medskip

We first note that, from Assumption 2.1, for any $p\ge1$, $\ell'$, $\epsilon>0$,
\begin{align}\label{eqn-mfI-diff-p32}
\limsup_{N\to\infty} \sup_{t\in [0,T]} \mathbb{P}& \left(\sup_{\mfa\in[0, T']}\left[\bar\mfI^N_{\ell'}(0,(\mfa-t)^+)-\bar\mfI^N_{\ell'}(0,(\mfa-t-\delta)^+\right]>\epsilon\right) \nonumber \\&\le\mathbb{P} \left(\sup_{\mfa\in[0, T']}\left[\bar\mfI_{\ell'}(0,(\mfa-t)^+)-\bar\mfI_{\ell'}(0,(\mfa-t-\delta)^+\right]>\epsilon\right) \nonumber\\
&\le\frac{C^p}{\epsilon^p}\delta^{\alpha p},
\end{align}
so that it suffices to choose $p>\alpha^{-1}$ in order for the last upper bound to be of the form $C\delta^\beta$, with $\beta>1$.

Next, thanks to Lemma 4.1,
\begin{align}\label{eqn-mfI-diff-p33}
\limsup_{N\to\infty} \sup_{t\in [0,T]} \mathbb{P}& \left(\sup_{\mfa\in[0, T']}\left[\bar{A}^N_{\ell'}((t+\delta-\mfa)^+)- \bar{A}^N_{\ell'}((t-\mfa)^+)\right]>\epsilon\right) \nonumber \\&\le\mathbb{P}\left(\sup_{\mfa\in[0, T']}\left[\bar{A}_{\ell'}((t+\delta-\mfa)^+)- \bar{A}_{\ell'}((t-\mfa)^+)\right]>\epsilon\right)\nonumber\\
&=0,
\end{align}
as soon as $\lambda^\ast\beta^\ast\delta<\epsilon$.

For the first term on the right hand side of \eqref{eqn-mfI-diff-p1-3}, we observe that it is bounded by 
\begin{align*}
& \frac{1}{N} \bigg| \int_0^t\int_0^\infty{\bf1}_{ u \le \nu^{I}_{\ell, \ell'}\mfI^N_{\ell}(s, \mfa-(t-s))}\bar{Q}^I_{\ell,\ell'}(ds,du) \bigg|  + \frac{1}{N} \bigg|  \int_0^t\int_0^\infty{\bf1}_{ u \le \nu^{I}_{\ell, \ell'}\mfI^N_{\ell}(s, \mfa-(t+\delta-s)) }\bar{Q}^I_{\ell,\ell'}(ds,du) \bigg|\,.
\end{align*}
Let
\[ M^N(\mfa)=\frac{1}{N}\int_0^t\int_0^\infty {\bf1}_{u\le\nu^I_{\ell,\ell'}\mfI^N_\ell(s,\mfa-t+s)}\bar{Q}^I_{\ell,\ell'}(ds,du)\,.
\]
It then suffices to show that $\sup_{\mfa\in[0,T']}|M^N(\mfa)|\to0$ in probability, as $N\to\infty$, for each fixed $0<t \le T$. 

We first note that
\[ \mathbb{E} \left[|M^N(\mfa)|^2\right]\le\frac{\nu^I_{\ell,\ell'}t}{N}\to0, \ \text{ as }N\to\infty\,.\]
Let $\mfa'>\mfa$. We have
\begin{align*}
M^N(\mfa')-M^N(\mfa)&=\frac{1}{N}\int_0^t\int_0^\infty 
{\bf1}_{\nu^I_{\ell,\ell'}\mfI^N_\ell(s,\mfa-t+s)<u\le\nu^I_{\ell,\ell'}\mfI^N_\ell(s,\mfa'-t+s)}\bar{Q}^I_{\ell,\ell'}(ds,du),\\
|M^N(\mfa')-M^N(\mfa)|&\le\frac{1}{N}\int_0^t\int_0^\infty 
{\bf1}_{\nu^I_{\ell,\ell'}\mfI^N_\ell(s,\mfa-t+s)<u\le\nu^I_{\ell,\ell'}\mfI^N_\ell(s,\mfa'-t+s)}{Q}^I_{\ell,\ell'}(ds,du)\\
&\quad+\nu^I_{\ell,\ell'}\int_0^t\left[\bar\mfI^N_\ell(s,\mfa'-t+s)-\bar\mfI^N_\ell(s,\mfa-t+s)\right]ds
\end{align*}
Since the last right hand side is increasing in $\mfa'$, for any $\rho>0$,
\begin{align} \label{eqn-Ma-inc-bound}
\sup_{\mfa<\mfa'\le\mfa+\rho}|M^N(\mfa')-M^N(\mfa)|&\le\frac{1}{N}\int_0^t\int_0^\infty 
{\bf1}_{\nu^I_{\ell,\ell'}\mfI^N_\ell(s,\mfa-t+s)<u\le\nu^I_{\ell,\ell'}\mfI^N_\ell(s,\mfa+\rho-t+s)}{Q}^I_{\ell,\ell'}(ds,du) \nonumber\\
&\quad+\nu^I_{\ell,\ell'}\int_0^t\left[\bar\mfI^N_\ell(s,\mfa+\rho-t+s)-\bar\mfI^N_\ell(s,\mfa-t+s)\right]ds\nonumber\\
&=\frac{1}{N}\int_0^t\int_0^\infty 
{\bf1}_{\nu^I_{\ell,\ell'}\mfI^N_\ell(s,\mfa-t+s)<u\le\nu^I_{\ell,\ell'}\mfI^N_\ell(s,\mfa+\rho-t+s)}\bar{Q}^I_{\ell,\ell'}(ds,du)\nonumber\\
&\quad+2\nu^I_{\ell,\ell'}\int_0^t\left[\bar\mfI^N_\ell(s,\mfa+\rho-t+s)-\bar\mfI^N_\ell(s,\mfa-t+s)\right]ds\,.
\end{align}
We have
\begin{align*}
\mathbb{E}&\left[\left|\frac{1}{N}\int_0^t\int_0^\infty 
{\bf1}_{\nu^I_{\ell,\ell'}\mfI^N_\ell(s,\mfa-t+s)<u\le\nu^I_{\ell,\ell'}\mfI^N_\ell(s,\mfa+\rho-t+s)}\bar{Q}^I_{\ell,\ell'}(ds,du)\right|^2\right]\\
&\le\frac{\nu^I_{\ell,\ell'}}{N}\mathbb{E} \bigg[\int_0^t\left[\bar\mfI^N_\ell(s,\mfa+\rho-t+s)-\bar\mfI^N_\ell(s,\mfa-t+s)\right]ds \bigg]\\
&\le\frac{\nu^I_{\ell,\ell'}}{N}Ct[\rho^\alpha+\rho],
\end{align*}
for some constant $C$, 
where the last inequality follows from a similar argument for the bound in \eqref{eqn-mfI-diff-p31} and then by Assumption \ref{AS-initial} and \eqref{eqn-barA-inc} in Lemma \ref{lem-An-conv}, 
and the second term on the right  satisfies (we choose $p>1/\alpha$):
\[\mathbb{E}\left(\left|2\nu^I_{\ell,\ell'}\int_0^t\left[\bar\mfI^N_\ell(s,\mfa+\rho-t+s)-\bar\mfI^N_\ell(s,\mfa-t+s)\right]ds\right|^p\right)
\le Ct^p[\rho^{\alpha p}+\rho^p]\,.\]
Finally, for any $\epsilon,\eta>0$, 
\begin{align*}
\mathbb{P}\left(\sup_{\mfa<\mfa'\le\mfa+\rho}|M^N(\mfa')-M^N(\mfa)|\ge\epsilon\right)&\le C_T\left(\frac{4(\rho^\alpha+\rho)}{N\epsilon^2}+
\frac{2^p(\rho^{\alpha p}+\rho^p)}{\epsilon^p}\right)\\
&\le C_T\left(\frac{\rho^\alpha}{N\epsilon^2}+\frac{\rho^{\alpha p}}{\epsilon^p}\right) \,,
\end{align*}
for some constant $C_T>0$, 
since $\alpha\le 1$, and we shall choose below $\rho\le 1$.
Consequently
\begin{align*}
\frac{1}{\rho}\mathbb{P}\left(\sup_{\mfa<\mfa'\le\mfa+\rho}|M^N(\mfa')-M^N(\mfa)|\ge\epsilon\right)\le \eta\, ,
\end{align*}
if $\rho=\left(\frac{\eta\epsilon^p}{2C_T}\right)^{\frac{1}{\alpha p-1}}$, and 
$N\ge N_0=\left(\frac{2C_T}{\eta}\right)^{\frac{\alpha(p-1)}{\alpha p-1}}\times\epsilon^{-2-p\frac{1-\alpha}{\alpha p-1}}$.

It follows from this and the Corollary on page 83 in Billingsley  \cite{billingsley1999convergence}  that for any $\epsilon,\ \eta>0$, there exists $\rho>0$ and $N_0$ such that for any $N\ge N_0$,
\begin{align}\label{modcont}
\mathbb{P}\left(\sup_{0\le\mfa\le \mfa'\le T',\ \mfa'-\mfa\le\rho}|M^N(\mfa)-M^N(\mfa')|\ge\epsilon\right)\le\eta\,.
\end{align}

Now we are in a position to prove  that  
\begin{align*}
\sup_{\mfa\in[0,T']}|M^N(a)|\to0,\ \text{ in probability, as }N\to\infty\, ,
\end{align*}
i.e., that for any $\epsilon,\ \eta>0$, there exists $N_{\epsilon,\eta}$ such that for any $N\ge N_{\epsilon,\eta}$,
\begin{align}\label{convunif}
\mathbb{P}\left(\sup_{\mfa\in[0,T']}|M^N(a)|\ge \epsilon\right)\le\eta\,.
\end{align}

From \eqref{modcont}, we can first choose $\rho$ and $N_0$ such that 
\begin{align}\label{modcont2}
\mathbb{P}\left(\sup_{0\le\mfa\le \mfa'\le T',\ \mfa'-\mfa\le\rho}|M^N(\mfa)-M^N(\mfa')|\ge\frac{\epsilon}{2}\right)\le\frac{\eta}{2}\,.
\end{align}

Next we consider the following finite number of sequences indexed by $N$: $\{M^N(i\rho\wedge T'), \ 0\le i\le (T'/\rho)+1\}$. Since for each   $0\le i\le (T'/\rho)+1$, $M^N(i\rho\wedge T')\to0$ in probability, as $N\to\infty$,
\begin{align*}
\sup_{0\le i\le (T'/\rho)+1}|M^N(i\rho\wedge T')|\to0\ \text{ in probability, as }N\to\infty\,.
\end{align*}
Consequently, there exists $N_{\epsilon,\eta}\ge N_0$ such that
\begin{align}\label{convfin}
\mathbb{P}\left(\sup_{0\le i\le (T'/\rho)+1}|M^N(i\rho\wedge T')|\ge\frac{\epsilon}{2}\right)\le\frac{\eta}{2}\,.
\end{align}
Now \eqref{convunif} follows from \eqref{modcont2} and \eqref{convfin}, since clearly
\begin{align*}
\sup_{\mfa\in[0,T']}|M^N(a)|\le\sup_{0\le i\le (T'/\rho)+1}|M^N(i\rho\wedge T')|+\sup_{0\le\mfa\le \mfa'\le T',\ \mfa'-\mfa\le\rho}|M^N(\mfa)-M^N(\mfa')|\,.
\end{align*}

Therefore, combining the above, we obtain 
\begin{align*}
\limsup_N\sup_{t\in [0,T]}&\frac{1}{\delta}
\P\left(\sup_{w\in[0,\delta], \mfa\in[0, T']}\frac{1}{N} \Bigg|  \int_{(t-\mfa)^+}^t\int_0^\infty{\bf1}_{ \nu^{I}_{\ell, \ell'}\mfI^N_{\ell}(s, \mfa-(t+w-s))<u\le \nu^{I}_{\ell, \ell'}\mfI^N_{\ell}(s, \mfa-(t-s))}\bar{Q}^I_{\ell,\ell'}(ds,du)\Bigg|>\epsilon\right) \\
& \quad \to 0 \quad \text{as} \,\, \delta\to 0. 
\end{align*}
We have thus shown that \eqref{eqn-mfI-diff-c1} holds.

\medskip

 For the second requirement of condition (ii) in Theorem \ref{thm-DD-conv0}, we show that for $\ep>0$, as $\delta\to 0$, 
\begin{align} \label{eqn-mfI-diff-c2}
\limsup_N \sup_{t \in [0, T]} \frac{1}{\delta} \P \left(\sup_{v\in [0,\delta]} \sup_{t \in [0,T]} \big| \bar{M}^{N}_{\mfI, \ell, \ell'} (t,\mfa+v) - \bar{M}^{N}_{\mfI, \ell, \ell'} (t,\mfa)\big| > \varepsilon\right) \to 0. 
\end{align} 
We have 
\begin{align*}
& \Big|\bar{M}^{N}_{\mfI, \ell, \ell'} (t,\mfa+v) - \bar{M}^{N}_{\mfI, \ell, \ell'} (t,\mfa)\Big| \\
&= \Bigg|  \frac{1}{N} \left(\int_{(t-\mfa-v)^+}^{t}\int_0^\infty{\bf1}_{u\le \nu^{I}_{\ell, \ell'}\mfI^N_{\ell}(s, \mfa+v-(t-s))}Q^I_{\ell,\ell'}(ds,du)   -  \nu^{I}_{\ell, \ell'} \int_{(t-\mfa-v)^+}^{t} \mfI^N_{\ell'}(s, \mfa+v-(t-s))  ds   \right) \non\\
& \quad -  \frac{1}{N} \left(\int_{(t-\mfa)^+}^t\int_0^\infty{\bf1}_{u\le \nu^{I}_{\ell, \ell'}\mfI^N_{\ell}(s, \mfa-(t-s))}Q^I_{\ell,\ell'}(ds,du)   -  \nu^{I}_{\ell, \ell'} \int_{(t-\mfa)^+}^t \mfI^N_{\ell'}(s, \mfa-(t-s))  ds   \right) \Bigg| \non\\
&\le  \frac{1}{N} \int_{(t-\mfa-v)^+}^{t}\int_0^\infty{\bf1}_{ \nu^{I}_{\ell, \ell'}  \mfI^N_{\ell}(s, \mfa -(t-s)) < u\le \nu^{I}_{\ell, \ell'}\mfI^N_{\ell}(s, \mfa+v-(t-s))}Q^I_{\ell,\ell'}(ds,du)   \\
& \quad + \frac{1}{N} \int_{(t-\mfa-v)^+}^{(t-\mfa)^+} \int_0^\infty{\bf1}_{u\le \nu^{I}_{\ell, \ell'}\mfI^N_{\ell}(s, \mfa-(t-s))}Q^I_{\ell,\ell'}(ds,du)    \\
& \quad +  \nu^{I}_{\ell, \ell'} \int_{(t-\mfa-v)^+}^{(t-\mfa)^+} \bar\mfI^N_{\ell'}(s, \mfa+v-(t-s))  ds\,.  \non
\end{align*} 
Clearly, the same arguments used to verify condition (i) allow us to conclude condition (ii) of Theorem \ref{thm-DD-conv0}.
\end{proof} 

We next prove the convergence of the processes $\bar\mfI^{N,0}_\ell(t, \mfa)$ and $\bar\mfI^{N,1}_\ell(t, \mfa)$.
We will only provide the detailed proof for the convergence of $\bar\mfI^{N,1}_\ell(t, \mfa)$ since the proof of that of $\bar\mfI^{N,0}_\ell(t, \mfa)$ follows the same steps with some modifications. 
 
\begin{lemma} \label{lem-bar-mfI0-conv}
Under Assumptions \ref{AS-initial} and \ref{AS-lambda}, for each $\ell\in \sL$, 
\begin{align}
\bar\mfI^{N,0}_\ell(t, \mfa)  \to \bar\mfI^{0}_\ell(t, \mfa) 
\end{align} 
in probability, uniformly in $t$ and $\mfa$, as $N\to\infty$, where
\begin{align} \label{eqn-bar-mfI0}
\bar\mfI^{0}_\ell(t, \mfa) &= \sum_{\ell'=1}^L  \int_0^{(\mfa-t)^+} \bigg( \int_0^t p_{\ell',\ell}(u)   F_0(du|y) \bigg) \bar\mfI_{\ell'}(0,dy)\,. 
\end{align}
\end{lemma}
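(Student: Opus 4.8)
The plan is to condition on the initial configuration and use conditional independence. Let $\sG^N$ be the $\sigma$-field generated by all the initial data, in particular the ages $\tilde\tau^{\ell',N}_{j,0}$ and, for each patch, the list of initially infected individuals still infectious at time $0$. Conditionally on $\sG^N$ the pairs $(\eta^{0,\ell'}_j, X^{0,\ell'}_j(\cdot))$ are independent over $(j,\ell')$, with $\eta^{0,\ell'}_j$ independent of $X^{0,\ell'}_j$, $\eta^{0,\ell'}_j\sim F_0(\cdot\,|\,\tilde\tau^{\ell',N}_{j,0})$, and $X^{0,\ell'}_j$ a Markov chain started at $\ell'$ with transition probabilities $p_{\ell',\ell}(\cdot)$. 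Setting $\psi_{\ell',\ell}(t,y):=\int_0^t p_{\ell',\ell}(u)\,F_0(du\,|\,y)\in[0,1]$, one has $\E[\mathbf{1}_{\eta^{0,\ell'}_j\le t}\mathbf{1}_{X^{0,\ell'}_j(\eta^{0,\ell'}_j)=\ell}\mid\sG^N]=\psi_{\ell',\ell}(t,\tilde\tau^{\ell',N}_{j,0})$, hence $\E[\bar\mfI^{N,0}_\ell(t,\mfa)\mid\sG^N]=\sum_{\ell'=1}^L\int_{[0,(\mfa-t)^+]}\psi_{\ell',\ell}(t,y)\,\bar\mfI^N_{\ell'}(0,dy)$, and, the summands being $\{0,1\}$-valued and conditionally independent, $\Var(\bar\mfI^{N,0}_\ell(t,\mfa)\mid\sG^N)\le N^{-1}\sum_{\ell'=1}^L\bar\mfI^N_{\ell'}(0,\infty)=O(N^{-1})$ uniformly in $(t,\mfa)$.

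This variance bound yields $\bar\mfI^{N,0}_\ell(t,\mfa)-\E[\bar\mfI^{N,0}_\ell(t,\mfa)\mid\sG^N]\to0$ in probability for each fixed $(t,\mfa)$, while the convergence $\bar\mfI^N_{\ell'}(0,\cdot)\to\bar\mfI_{\ell'}(0,\cdot)$ from Assumption \ref{AS-initial} (uniform on compacts, the limit being continuous), the boundedness of $\psi_{\ell',\ell}(t,\cdot)$, and the fact that its discontinuity set is contained in the countable set of shifted atoms of $F$ and is therefore $\bar\mfI_{\ell'}(0,\cdot)$-null (the limit measure being atomless) give $\E[\bar\mfI^{N,0}_\ell(t,\mfa)\mid\sG^N]\to\bar\mfI^0_\ell(t,\mfa)$. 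The same atomlessness together with the Hölder modulus of $\bar\mfI_{\ell'}(0,\cdot)$ makes $\bar\mfI^0_\ell$ jointly continuous, so it suffices to prove convergence in $\bD_\bD$ through Theorem \ref{thm-DD-conv0}: its condition (i) is the pointwise convergence just obtained (the limit being deterministic), and convergence in $\bD_\bD$ to a jointly continuous limit is locally uniform convergence in $(t,\mfa)$.

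It remains to verify the two requirements of condition (ii) of Theorem \ref{thm-DD-conv0}, the analogues of \eqref{eqn-mfI-diff-c1} and \eqref{eqn-mfI-diff-c2}; since $\mathbf{1}_{\eta\le t}\mathbf{1}_{X(\eta)=\ell}\le\mathbf{1}_{\eta\le t}$, the migration indicator enters only the identification of the limit and may be dropped in the bounds. For the modulus in $\mfa$, monotonicity gives, for $v\le\delta$, $0\le\bar\mfI^{N,0}_\ell(t,\mfa+v)-\bar\mfI^{N,0}_\ell(t,\mfa)\le\sum_{\ell'=1}^L\sup_{b\ge0}\big(\bar\mfI^N_{\ell'}(0,b+\delta)-\bar\mfI^N_{\ell'}(0,b)\big)\le C\delta^\alpha+o_P(1)$ by Assumption \ref{AS-initial} and $\bar\mfI^N_{\ell'}(0,\cdot)\to\bar\mfI_{\ell'}(0,\cdot)$ uniformly, which gives \eqref{eqn-mfI-diff-c2} (exactly as in the argument around \eqref{eqn-mfI-diff-p32}). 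For the modulus in $t$, bound the increment for $w\le\delta$ by the contribution of the newly recovered, $(\mathrm{I}):=\sum_{\ell'=1}^L\big(\bar N^{\ell'}(t+\delta)-\bar N^{\ell'}(t)\big)$ with $\bar N^{\ell'}(s):=N^{-1}\#\{j:\tilde\tau^{\ell',N}_{j,0}\le T',\ \eta^{0,\ell'}_j\le s\}$, plus the contribution of shifting the age cutoff, $(\mathrm{II}):=\sum_{\ell'=1}^L\sup_{b\ge0}\big(\bar\mfI^N_{\ell'}(0,b)-\bar\mfI^N_{\ell'}(0,(b-\delta)^+)\big)$; the term $(\mathrm{II})$ is controlled exactly as above. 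For $(\mathrm{I})$, each $\bar N^{\ell'}$ is nondecreasing and, by the conditional law of large numbers above, converges pointwise in probability to $G_{\ell'}(s):=\int_{[0,T']}F_0(s\,|\,y)\,\bar\mfI_{\ell'}(0,dy)$, hence locally uniformly since the limit is continuous; and $dG_{\ell'}$ is atomless — its atoms would again lie at $\bar\mfI_{\ell'}(0,\cdot)$-null shifted atoms of $F$ — so $G_{\ell'}$ is uniformly continuous and $(\mathrm{I})\le\sum_{\ell'=1}^L\big(\omega_{G_{\ell'}}(\delta)+o_P(1)\big)$, uniformly in $t\le T$, with $\omega_{G_{\ell'}}(\delta)\to0$ as $\delta\to0$. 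This verifies \eqref{eqn-mfI-diff-c1} and finishes the proof.

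The essential difficulty is \eqref{eqn-mfI-diff-c1}: if $F$ carried an atom, a macroscopic fraction of the initially infected individuals could recover simultaneously and $\bar\mfI^{N,0}_\ell(\cdot,\mfa)$ would jump by $\Theta(1)$. The resolution is that the (Hölder) continuity of $\mfa\mapsto\bar\mfI_{\ell'}(0,\mfa)$ imposed by Assumption \ref{AS-initial} spreads the recovery times out, so that $dG_{\ell'}$ is atomless and its deterministic modulus of continuity bounds $(\mathrm{I})$ uniformly in $t$; establishing this smearing and transferring the modulus from $G_{\ell'}$ to the prelimit $\bar N^{\ell'}$ uniformly in $t$ is the crux of the argument.
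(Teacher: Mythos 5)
The paper does not actually prove Lemma~\ref{lem-bar-mfI0-conv} directly; it states that ``the proof of that of $\bar\mfI^{N,0}_\ell(t,\mfa)$ follows the same steps with some modifications'' as the proof of Lemma~\ref{lem-bar-mfI1-conv}. Your proof is a correct realization of exactly that template: you replace the compensator $\breve\mfI^{N,1}$ by the conditional expectation given $\sG^N$, show this compensator converges to $\bar\mfI^0_\ell$ using $\bar\mfI^N_{\ell'}(0,\cdot)\to\bar\mfI_{\ell'}(0,\cdot)$ (playing the role Lemma~\ref{le:Portmanteau} plays in the proof of Lemma~\ref{lem-bar-mfI1-conv}, with the measure $\bar{A}^N(ds)$ replaced by $\bar\mfI^N_{\ell'}(0,dy)$), control the centered part by an $O(N^{-1})$ conditional variance in place of the PRM martingale bound, and verify condition~(ii) of Theorem~\ref{thm-DD-conv0} by splitting the $t$-increment into the term arising from new recoveries and the term from the moving age cutoff, bounded respectively by the modulus of $G_{\ell'}$ and the H\"older modulus of $\bar\mfI_{\ell'}(0,\cdot)$ from Assumption~\ref{AS-initial}. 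These are precisely the ``modifications'' the paper alludes to, and you have also correctly identified the role of the H\"older condition in ruling out macroscopic simultaneous recoveries when $F$ has atoms. One small point worth polishing in a final write-up: for the compensator part $\E[\bar\mfI^{N,0}_\ell(t,\mfa)\mid\sG^N]\to\bar\mfI^0_\ell(t,\mfa)$ you only argue pointwise convergence, whereas condition~(i) of Theorem~\ref{thm-DD-conv0} is a convergence uniform over the compact; this is easily repaired by the same ``finite grid plus modulus'' argument used in the paper around \eqref{modcont}--\eqref{convunif}, since you already control the modulus of both the prelimit and the limit.
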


\begin{lemma} \label{lem-bar-mfI1-conv}
Under Assumptions \ref{AS-initial} and \ref{AS-lambda}, for each $\ell\in \sL$,  along a convergent subsequence of $\bar{A}^N_\ell$ with limit $\bar{A}_\ell$, as $N\to\infty$, 
\begin{align}
\bar\mfI^{N,1}_\ell(t, \mfa)  \Rightarrow \bar\mfI^{1}_\ell(t, \mfa) 
\end{align} 
 for the topology of locally uniform convergence in $t$ and $\mfa$, 
where
\begin{align} \label{eqn-bar-mfI0}
\bar\mfI^{1}_\ell(t, \mfa) &= \sum_{\ell'=1}^L \int_{(t-\mfa)^+}^t \int_0^{t-s} p_{\ell',\ell}(u) F(du)  d \bar{A}_{\ell'}(s) \,. 
\end{align}
In fact we have the joint convergence $(\bar{A}^N_\ell(t),\bar\mfI^{N,1}_\ell(t, \mfa))\Rightarrow(\bar{A}_\ell(t),\bar\mfI^{1}_\ell(t, \mfa))$,
for the topology of locally uniform convergence in $t$ and $\mfa$.
\end{lemma}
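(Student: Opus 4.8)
The plan is to split $\bar\mfI^{N,1}_\ell(t,\mfa)$ into a compensator term that converges to $\bar\mfI^1_\ell(t,\mfa)$ and a martingale term that vanishes, the latter being handled with Theorem~\ref{thm-DD-conv0} along the lines of the proof of Lemma~\ref{lem-barM-conv}. First I would represent the newly infected individuals by a marked Poisson random measure: attach to the successive atoms of $Q_{\ell'}$ i.i.d. marks $\theta$ with the common law of $(\lambda_i^{\ell'}(\cdot),X_i^{\ell'}(\cdot))$, which does not alter the joint law of the processes under study. Writing $\eta(\theta),X(\theta)$ for the associated lifetime and migration path and $Q^*_{\ell'}$ for the resulting PRM on $\RR_+^2\times\Theta$ with intensity $ds\,du\,\P(d\theta)$,
\begin{align*}
\bar\mfI^{N,1}_\ell(t,\mfa)=\frac1N\sum_{\ell'=1}^L\int_{(t-\mfa)^+}^t\!\!\int_0^\infty\!\!\int_\Theta{\bf1}_{u\le\Upsilon^N_{\ell'}(s^-)}{\bf1}_{s+\eta(\theta)\le t}{\bf1}_{X(\theta)(\eta(\theta))=\ell}\,Q^*_{\ell'}(ds,du,d\theta).
\end{align*}
Since $\mfF^N_{\ell'}(s^-)$ and hence $\Upsilon^N_{\ell'}(s^-)$ is predictable for these marked PRMs (a mark is used in $\mfF^N$ only from its infection time on), subtracting the compensator gives $\bar\mfI^{N,1}_\ell=\bar{M}^N_{\mfI^1,\ell}+\bar{C}^N_{\mfI^1,\ell}$, where, by independence of $\eta$ and $X$ and $\P(\eta\le a,\,X(\eta)=\ell\mid X(0)=\ell')=\int_0^a p_{\ell',\ell}(u)F(du)=:\Psi_{\ell',\ell}(a)$,
\begin{align*}
\bar{C}^N_{\mfI^1,\ell}(t,\mfa)=\sum_{\ell'=1}^L\int_{(t-\mfa)^+}^t\Psi_{\ell',\ell}(t-s)\,\bar\Upsilon^N_{\ell'}(s)\,ds .
\end{align*}

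For the compensator term, by \eqref{eqn-An-decomp} and \eqref{eqn-barMn-conv} the process $G^N_{\ell'}:=\int_0^\cdot\bar\Upsilon^N_{\ell'}(r)\,dr=\bar{A}^N_{\ell'}-\bar{M}^N_{A,\ell'}$ converges, along the given subsequence, locally uniformly in probability to $\bar{A}_{\ell'}$, which is continuous and Lipschitz by \eqref{eqn-barA-inc}. Writing $\bar{C}^N_{\mfI^1,\ell}(t,\mfa)=\sum_{\ell'}\int_{(t-\mfa)^+}^t\Psi_{\ell',\ell}(t-s)\,dG^N_{\ell'}(s)$ and integrating by parts, and using that $\Psi_{\ell',\ell}$ is nondecreasing with $\Psi_{\ell',\ell}(\infty)\le1$ so that $s\mapsto\Psi_{\ell',\ell}(t-s)$ has total variation at most $1$ on $[(t-\mfa)^+,t]$ uniformly in $(t,\mfa)$, the uniform convergence $G^N_{\ell'}\to\bar{A}_{\ell'}$ and the continuity of $\bar{A}_{\ell'}$ give $\bar{C}^N_{\mfI^1,\ell}(t,\mfa)\to\sum_{\ell'}\int_{(t-\mfa)^+}^t\Psi_{\ell',\ell}(t-s)\,d\bar{A}_{\ell'}(s)=\bar\mfI^1_\ell(t,\mfa)$, locally uniformly in $(t,\mfa)$, in probability.

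For the martingale term I would show that $\sup_{(t,\mfa)\in[0,T]\times[0,T']}|\bar{M}^N_{\mfI^1,\ell}(t,\mfa)|\to0$ in probability by verifying the hypotheses of Theorem~\ref{thm-DD-conv0}. For fixed $(t,\mfa)$ the predictable quadratic variation gives $\E[(\bar{M}^N_{\mfI^1,\ell}(t,\mfa))^2]\le N^{-2}\sum_{\ell'}\E\int_0^t\Upsilon^N_{\ell'}(s)\,ds\le L\lambda^*\beta^*t/N\to0$. For the two modulus-of-continuity requirements, the increments of $\bar{M}^N_{\mfI^1,\ell}$ in $t$ and in $\mfa$ each split into a bounded number of integrals of indicators against the compensated marked PRMs over regions whose compensator mass is, for $w$ (resp. $v$) at most $\delta$ and uniformly in $(t,\mfa)$, at most $CN\delta$ — here one invokes \eqref{eqn-Upsilon-N-bound} and, for the piece generated by the window ${\bf1}_{s+\eta\le t}$, the elementary bound $\int_0^T\big(F(r+\delta)-F(r)\big)\,dr\le\delta$. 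Bounding each such term, after the supremum over $w$ (resp. $v$), by the uncompensated PRM mass of a fixed region of the same size plus its deterministically bounded compensator, Doob's and Markov's inequalities yield $\limsup_N\sup_t\delta^{-1}\P(\cdot>\ep)\le C\ep^{-2}\delta\to0$ as $\delta\to0$, exactly as in the proof of Lemma~\ref{lem-barM-conv}.

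Combining the three steps gives $\bar\mfI^{N,1}_\ell\to\bar\mfI^1_\ell$ locally uniformly in $(t,\mfa)$ in probability; the joint convergence $(\bar{A}^N_\ell,\bar\mfI^{N,1}_\ell)\Rightarrow(\bar{A}_\ell,\bar\mfI^1_\ell)$ is then immediate since everything lives on one probability space and $\bar\mfI^1_\ell$ is, in the limit, a continuous functional of $\bar A$. I expect the main difficulty to be the martingale step: unlike the pure migration martingale of Lemma~\ref{lem-barM-conv}, the time $t$ enters the integrand through three windows ($s>(t-\mfa)^+$, $s\le t$, $s+\eta\le t$), so organizing the increment into monotone pieces and handling the recovery window ${\bf1}_{t<s+\eta\le t+w}$ when $F$ carries atoms requires the extra care indicated above.
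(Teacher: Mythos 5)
Your proposal is correct and reaches the same conclusion, but it splits $\bar\mfI^{N,1}_\ell$ at a slightly different seam from the paper. The paper writes $\bar\mfI^{N,1}_\ell=\breve\mfI^{N,1}_\ell+V^N$, where $\breve\mfI^{N,1}_\ell(t,\mfa)=\sum_{\ell'}\int_{(t-\mfa)^+}^t\Psi_{\ell',\ell}(t-s)\,d\bar A^N_{\ell'}(s)$ with $\Psi_{\ell',\ell}(a)=\int_0^a p_{\ell',\ell}(u)F(du)$ (replacing each indicator by its conditional expectation given $\tau_i^{\ell',N}$) and $V^N$ the resulting sum of centered terms; it then gets $\breve\mfI^{N,1}_\ell\Rightarrow\bar\mfI^1_\ell$ directly from the convergence of $\bar A^N_{\ell'}$ via Lemma~\ref{le:Portmanteau}, and kills $V^N$ with Theorem~\ref{thm-DD-conv0}. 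You instead compensate all the way: $\bar\mfI^{N,1}_\ell=\bar C^N+\bar M^N$ with $\bar C^N=\sum_{\ell'}\int_{(t-\mfa)^+}^t\Psi_{\ell',\ell}(t-s)\bar\Upsilon^N_{\ell'}(s)\,ds$, which is the predictable compensator of the marked PRM integral. The two remainders differ by $\breve\mfI^{N,1}_\ell-\bar C^N=\sum_{\ell'}\int_{(t-\mfa)^+}^t\Psi_{\ell',\ell}(t-s)\,d\bar M^N_{A,\ell'}(s)$, so your $\bar M^N=V^N+\sum_{\ell'}\int\Psi\,d\bar M^N_{A,\ell'}$ additionally absorbs the arrival martingale. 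Your route buys a genuinely predictable compensated-PRM martingale, so the Doob/isometry bounds are structurally cleanest; the paper's route keeps the main piece an integral against $d\bar A^N_{\ell'}$, which transfers the convergence of $\bar A^N_{\ell'}$ most directly and makes the joint convergence with $\bar A^N_\ell$ manifest. Both are sound. Two points you gloss over that the paper spells out and that you should not skip: (a) the independence of $\eta$ and $X$ behind $\P(\eta\le a,X(\eta)=\ell\mid X(0)=\ell')=\Psi_{\ell',\ell}(a)$; and (b) the detailed modulus-of-continuity estimates in Theorem~\ref{thm-DD-conv0}(ii), where the non-monotone time window and the three shifting regions you flag are precisely where the real work sits — your sketch of monotonizing in $w$ and using $\int_0^T[F(r+\delta)-F(r)]\,dr\le\delta$ is the right idea, and it matches how the paper handles the analogous terms. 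Also note that at this stage $\bar A_{\ell'}$ is only a subsequential weak limit and need not be deterministic, so ``converges locally uniformly in probability to $\bar A_{\ell'}$'' should be weak convergence (or a Skorokhod-representation argument), as the paper is careful to state.
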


\begin{proof}
Define
\begin{align} \label{eqn-breve-mfI-N-1}
\breve\mfI^{N,1}_\ell(t, \mfa) &:= \frac{1}{N} \sum_{\ell'=1}^L \sum_{i=A^N_{\ell'}((t-\mfa)^+)+1}^{A^N_{\ell'}(t)} 
 \int_0^{t-\tau_{i}^{\ell',N}} p_{\ell',\ell}(u) F(du)  \non\\
 & =  \sum_{\ell'=1}^L \int_{(t-\mfa)^+}^t  \int_0^{t-s} p_{\ell',\ell}(u) F(du)  d \bar{A}^N_{\ell'}(s) \,. 
\end{align} 
(Here the integral $\int_a^b$ stands for $\int_{(a,b]}$.)
By Lemma \ref{le:Portmanteau}, for each $t, \mfa \ge 0$,  
\begin{equation} \label{eqn-breve-mfI-conv}
\breve\mfI^{N,1}_\ell(t, \mfa)  \Rightarrow \bar\mfI^{1}_\ell(t, \mfa) \qasq N \to \infty. 
\end{equation} 
Then to show that the convergence $\breve\mfI^{N,1}_\ell(t, \mfa)  \Rightarrow \bar\mfI^{1}_\ell(t, \mfa)$ holds locally uniformly in $t$ and $\mfa$, it suffices to show that for any $\varepsilon>0$, there exists $\delta>0$ such that for any $t, \mfa\ge 0$, 
\begin{align}
\limsup_N \P \left( \sup_{t \le t' \le t+\delta, \, \mfa \le \mfa' \le \mfa+\delta} \Big| \breve\mfI^{N,1}_\ell(t, \mfa) - \breve\mfI^{N,1}_\ell(t', \mfa')   \Big| >\varepsilon\right) =0. 
\end{align}
This follows from the second  representation in \eqref{eqn-breve-mfI-N-1}, and the convergence of $\bar{A}^N_\ell$ in Lemma \ref{lem-An-conv}.

Next we consider the difference 
\begin{align*}
V^N(t,\mfa) &  := \bar\mfI^{N,1}_\ell(t, \mfa)   - \breve\mfI^{N,1}_\ell(t, \mfa)  \\
&= \frac{1}{N} \sum_{\ell'=1}^L \sum_{i=A^N_{\ell'}((t-\mfa)^+)+1}^{A^N_{\ell'}(t)} \bigg({\bf1}_{\tau_{i}^{\ell',N}+\eta^{\ell'}_i \le t}  {\bf1}_{X_i^{\ell'}(\eta^{\ell'}_i) = \ell} -  \int_0^{t-\tau_{i}^{\ell',N}} p_{\ell',\ell}(u) F(du) \bigg)\,. 
\end{align*}
We apply Theorem \ref{thm-DD-conv0} to show that 
\[
V^N(t,\mfa) \to 0 
\]
in probability in the topology of locally uniform convergence in $t$ and $\mfa$ as $N \to \infty$.
For condition (i) in Theorem \ref{thm-DD-conv0}, we have
\begin{align*}
\P(V^N(t,\mfa) >\epsilon)  & \le \frac{1}{\epsilon^2} \E\big[V^N(t,\mfa)^2\big ] \\
& \le  \frac{L}{\epsilon^2N}  \sum_{\ell'=1}^L \E\left[  \int_{(t-\mfa)^+}^t  \int_0^{t-s} p_{\ell',\ell}(u) F(du)  \left(1-  \int_0^{t-s} p_{\ell',\ell}(u) F(du) \right) d \bar{A}^N_{\ell'}(s) \right] \\
& \le \frac{L}{\epsilon^2N}  \sum_{\ell'=1}^L \E\left[  \int_{(t-\mfa)^+}^t  \int_0^{t-s} p_{\ell',\ell}(u) F(du)  \bar{\Upsilon}^N_{\ell'}(s) ds \right] \\
& \le \frac{L}{\epsilon^2N}  \lambda^* \beta^*  \sum_{\ell'=1}^L  \int_{(t-\mfa)^+}^t  \int_0^{t-s} p_{\ell',\ell}(u) F(du)  ds\,,
\end{align*}
and thus,
\begin{align*}
\sup_{t\in [0,T],\, \mfa \in [0, T']}\P(V^N(t,\mfa) >\epsilon) \to 0 \qasq N \to \infty. 
\end{align*} 
 
We then check the tightness requirements in condition (ii) of Theorem \ref{thm-DD-conv0}. 
For the first, we show that for $\ep>0$, as $\delta\to 0$, 
\begin{align} \label{eqn-V-diff-c1}
\limsup_N \sup_{t \in [0, T]} \frac{1}{\delta} \P \left(\sup_{u\in [0,\delta]} \sup_{\mfa \in [0,T']} \big| V^N(t+u,\mfa) - V^N(t,\mfa)\big| \right) \to 0. 
\end{align} 
We have 
\begin{align*}
 & \big| V^N(t+u,\mfa) - V^N(t,\mfa)\big|  \\
 & = \Bigg|  \frac{1}{N} \sum_{\ell'=1}^L \sum_{i=A^N_{\ell'}((t+u-\mfa)^+)+1}^{A^N_{\ell'}(t+u)} \bigg({\bf1}_{\tau_{i}^{\ell',N}+\eta^{\ell'}_i \le t+u}  {\bf1}_{X_i^{\ell'}(\eta^{\ell'}_i) = \ell} -  \int_0^{t+u-\tau_{i}^{\ell',N}} p_{\ell',\ell}(r) F(dr) \bigg)\\
 & \quad -  \frac{1}{N} \sum_{\ell'=1}^L \sum_{i=A^N_{\ell'}((t-\mfa)^+)+1}^{A^N_{\ell'}(t)} \bigg({\bf1}_{\tau_{i}^{\ell',N}+\eta^{\ell'}_i \le t}  {\bf1}_{X_i^{\ell'}(\eta^{\ell'}_i) = \ell} -  \int_0^{t-\tau_{i}^{\ell',N}} p_{\ell',\ell}(r) F(dr)  \bigg)\Bigg| \\
 & = \Bigg|  \frac{1}{N} \sum_{\ell'=1}^L \sum_{i=A^N_{\ell'}((t-\mfa)^+)+1}^{A^N_{\ell'}(t+u)}\bigg[ \bigg({\bf1}_{\tau_{i}^{\ell',N}+\eta^{\ell'}_i \le t+u}  {\bf1}_{X_i^{\ell'}(\eta^{\ell'}_i) = \ell} -  \int_0^{t+u-\tau_{i}^{\ell',N}} p_{\ell',\ell}(r) F(dr) \bigg)\\
 & \qquad  \qquad \qquad \qquad \qquad -  \bigg({\bf1}_{\tau_{i}^{\ell',N}+\eta^{\ell'}_i \le t}  {\bf1}_{X_i^{\ell'}(\eta^{\ell'}_i) = \ell} -  \int_0^{t-\tau_{i}^{\ell',N}} p_{\ell',\ell}(r) F(dr)  \bigg) \bigg] \\
 & \qquad - \frac{1}{N} \sum_{\ell'=1}^L \sum_{i=A^N_{\ell'}((t-\mfa)^+)+1}^{A^N_{\ell'}((t+u-\mfa)^+)} \bigg({\bf1}_{\tau_{i}^{\ell',N}+\eta^{\ell'}_i \le t+u}  {\bf1}_{X_i^{\ell'}(\eta^{\ell'}_i) = \ell} -  \int_0^{t+u-\tau_{i}^{\ell',N}} p_{\ell',\ell}(r) F(dr)  \bigg)  \\
 & \quad +  \frac{1}{N} \sum_{\ell'=1}^L \sum_{i=A^N_{\ell'}(t)+1}^{A^N_{\ell'}(t+u)} \bigg({\bf1}_{\tau_{i}^{\ell',N}+\eta^{\ell'}_i \le t}  {\bf1}_{X_i^{\ell'}(\eta^{\ell'}_i) = \ell} -  \int_0^{t-\tau_{i}^{\ell',N}} p_{\ell',\ell}(r) F(dr) \bigg)\Bigg| \\
 & \le    \frac{1}{N} \sum_{\ell'=1}^L \sum_{i=A^N_{\ell'}((t-\mfa)^+)+1}^{A^N_{\ell'}(t+u)} \bigg|{\bf1}_{t< \tau_{i}^{\ell',N}+\eta^{\ell'}_i \le t+u}  {\bf1}_{X_i^{\ell'}(\eta^{\ell'}_i) = \ell} -  \int_{t-\tau_{i}^{\ell',N}}^{t+u-\tau_{i}^{\ell',N}} p_{\ell',\ell}(r) F(dr)  \bigg|\\
 & \qquad + \frac{1}{N} \sum_{\ell'=1}^L \sum_{i=A^N_{\ell'}((t-\mfa)^+)+1}^{A^N_{\ell'}((t+u-\mfa)^+)} \bigg|{\bf1}_{\tau_{i}^{\ell',N}+\eta^{\ell'}_i \le t+u}  {\bf1}_{X_i^{\ell'}(\eta^{\ell'}_i) = \ell} -  \int_0^{t+u-\tau_{i}^{\ell',N}} p_{\ell',\ell}(r) F(dr)  \bigg| \\
 & \quad +  \frac{1}{N} \sum_{\ell'=1}^L \sum_{i=A^N_{\ell'}(t)+1}^{A^N_{\ell'}(t+u)} \bigg|{\bf1}_{\tau_{i}^{\ell',N}+\eta^{\ell'}_i \le t}  {\bf1}_{X_i^{\ell'}(\eta^{\ell'}_i) = \ell} -  \int_0^{t-\tau_{i}^{\ell',N}} p_{\ell',\ell}(r) F(dr)  \bigg|  \\
 & \le  \frac{1}{N} \sum_{\ell'=1}^L \sum_{i=A^N_{\ell'}((t-\mfa)^+)+1}^{A^N_{\ell'}(t+u)}  {\bf1}_{t< \tau_{i}^{\ell',N}+\eta^{\ell'}_i \le t+u}  {\bf1}_{X_i^{\ell'}(\eta^{\ell'}_i) = \ell}  \\
 & \qquad +  \frac{1}{N} \sum_{\ell'=1}^L \sum_{i=A^N_{\ell'}((t-\mfa)^+)+1}^{A^N_{\ell'}(t+u)}  \int_{t-\tau_{i}^{\ell',N}}^{t+u-\tau_{i}^{\ell',N}} p_{\ell',\ell}(r) F(dr)  \\
 & \qquad + \sum_{\ell'=1}^L \Big(\bar{A}^N_{\ell'}(t+u-\mfa) -\bar{A}^N_{\ell'}((t-\mfa)^+)  \Big) 
 +  \sum_{\ell'=1}^L \Big(\bar{A}^N_{\ell'}(t+u) -\bar{A}^N_{\ell'}(t)  \Big) \,.
\end{align*} 
Thus
\begin{align} \label{eqn-V-diff-p1}
& \P \left(\sup_{u\in [0,\delta]} \sup_{\mfa \in [0,T']} \big| V^N(t+u,\mfa) - V^N(t,\mfa)\big| > \epsilon \right)  \non \\
& \le \P \left(   \frac{1}{N} \sum_{\ell'=1}^L \sum_{i=A^N_{\ell'}((t-T')^+)+1}^{A^N_{\ell'}(t+\delta)}  {\bf1}_{t< \tau_{i}^{\ell',N}+\eta^{\ell'}_i \le t+\delta}  {\bf1}_{X_i^{\ell'}(\eta^{\ell'}_i) = \ell}> \epsilon/3 \right)  \non \\
& \quad +  \P \left( \frac{1}{N} \sum_{\ell'=1}^L \sum_{i=A^N_{\ell'}((t-T')^+)+1}^{A^N_{\ell'}(t+\delta)}  \int_{t-\tau_{i}^{\ell',N}}^{t+\delta-\tau_{i}^{\ell',N}} p_{\ell',\ell}(r) F(dr)  > \epsilon/3 \right)  \non \\
& \quad +  2 \P \left( \sup_{0 \le t \le T}  \sum_{\ell'=1}^L \Big|\bar{A}^N_{\ell'}(t+\delta) -\bar{A}^N_{\ell'}(t)  \Big| > \epsilon/6 \right)\,.
\end{align} 
For the first term, let $\{\widetilde{Q}_\ell(ds,  du, dr, d\theta),\ 1\le \ell\le L\}$ denote a collection of i.i.d. PRM on $\R_+^3\times \sL$ with mean measure $ds \times du \times F(dr) \times \mu_\ell(r, d\theta)$, where for each $r>0$, $\mu_\ell(r, \{\ell'\}) = p_{\ell,\ell'}(r)$. We denote by
 $\overline{Q}_\ell(ds,  du, dr, d\theta)$ be the compensated PRM associated to $\widetilde{Q}_\ell$, $1\le \ell\le L$.  We have 
\[
\sum_{i=A^N_{\ell'}((t-T')^+)+1}^{A^N_{\ell'}(t+\delta)}  {\bf1}_{t< \tau_{i}^{\ell',N}+\eta^{\ell'}_i \le t+\delta}  {\bf1}_{X_i^{\ell'}(\eta^{\ell'}_i) = \ell} = \int_{(t-T')^+}^{t+\delta} \int_0^\infty \int_{t-s}^{t+\delta-s} \int_{\{\ell\}} {\bf1}_{u \le \Upsilon^N_\ell(s^-)} \widetilde{Q}_{\ell'}(ds,  du, dr, d\theta)\,.
\]
Thus, we have the first term 
\begin{align} \label{eqn-V-diff-p2}
& \P \left(   \frac{1}{N} \sum_{\ell'=1}^L \sum_{i=A^N_{\ell'}((t-T')^+)+1}^{A^N_{\ell'}(t+\delta)}  {\bf1}_{t< \tau_{i}^{\ell',N}+\eta^{\ell'}_i \le t+\delta}  {\bf1}_{X_i^{\ell'}(\eta^{\ell'}_i) = \ell}> \epsilon/3 \right) \non  \\
& \le 9 \epsilon^{-2} \E \left[ \left(   \frac{1}{N} \sum_{\ell'=1}^L  \int_{(t-T')^+}^{t+\delta} \int_0^\infty \int_{t-s}^{t+\delta-s} \int_{\{\ell\}} {\bf1}_{u \le \Upsilon^N_\ell(s^-)} \widetilde{Q}_{\ell'}(ds,  du, dr, d\theta) \right)^2   \right] \non  \\
& \le 18 \epsilon^{-2} \E \left[   \frac{1}{N^2} \sum_{\ell'=1}^L  \left(  \int_{(t-T')^+}^{t+\delta} \int_0^\infty \int_{t-s}^{t+\delta-s} \int_{\{\ell\}} {\bf1}_{u \le \Upsilon^N_\ell(s^-)} \overline{Q}_{\ell'}(ds,  du, dr, d\theta) \right)^2   \right] \non  \\
& \quad + 18 L \epsilon^{-2} \E \left[ \frac{1}{N^2} \sum_{\ell'=1}^L  \left(   \int_{(t-T')^+}^{t+\delta}  
 \int_{t-s}^{t+\delta-s}p_{\ell',\ell}(r) F(dr)   \Upsilon^N_{\ell}(s) ds \right)^2   \right]  \non \\
 &=  18 \epsilon^{-2} \E \left[   \frac{1}{N} \sum_{\ell'=1}^L \int_{(t-T')^+}^{t+\delta}  
 \int_{t-s}^{t+\delta-s}p_{\ell',\ell}(r) F(dr)   \bar\Upsilon^N_\ell(s) ds  \right] \non \\
& \quad + 18 L \epsilon^{-2} \E \left[ \sum_{\ell'=1}^L  \left(   \int_{(t-T')^+}^{t+\delta}  
 \int_{t-s}^{t+\delta-s}p_{\ell',\ell}(r) F(dr)  \bar \Upsilon^N_{\ell}(s) ds \right)^2 \right]   \non\\
 & \le  18 \epsilon^{-2} \lambda^* \beta^*   \frac{1}{N} \sum_{\ell'=1}^L \int_{(t-T')^+}^{t+\delta}  
 \int_{t-s}^{t+\delta-s}p_{\ell',\ell}(r) F(dr)  ds \non\\
& \quad + 18 L \epsilon^{-2} (\lambda^* \beta^*)^2 \sum_{\ell'=1}^L  \left(   \int_{(t-T')^+}^{t+\delta}  
 \int_{t-s}^{t+\delta-s}p_{\ell',\ell}(r) F(dr) ds \right)^2 
\end{align} 
where the first term on the right hand converges to zero as $N\to \infty$. It remains to consider the second term divided by $\delta$.
Each summand in the sum over $\ell'$ is bounded from above by (with $F(s)=0$ for $s<0$)
\begin{align*}
\left(\int_0^{t+1}[F(t-s+\delta)-F(t-s)]ds\right)^2&=\left(\int_{-1}^t[F(s+\delta)-F(s)]ds\right)^2\\
&=\left(\int_0^{t+\delta}F(r)dr-\int_0^tF(s)ds\right)^2\\
&\le \delta^2\,.
\end{align*}
We have shown that this term satisfies \eqref{eqn-V-diff-c1}.
Now for the second term on the right hand side of  \eqref{eqn-V-diff-p1}, we have
\begin{align*}
&  \E\left[  \left( \frac{1}{N} \sum_{\ell'=1}^L \sum_{i=A^N_{\ell'}((t-T')^+)+1}^{A^N_{\ell'}(t+\delta)}  \int_{t-\tau_{i}^{\ell',N}}^{t+\delta-\tau_{i}^{\ell',N}} p_{\ell',\ell}(r) F(dr)  \right)^2 \right] \non \\ 
& \le  L  \E\left[  \sum_{\ell'=1}^L \left(   \int_{(t-T')^+}^{t+\delta}  
 \int_{t-s}^{t+\delta-s}p_{\ell',\ell}(r) F(dr)  d \bar{A}^N_{\ell'}(s) \right)^2 \right] \non \\ 
 & \le 2L  \E\left[  \sum_{\ell'=1}^L \left(   \int_{(t-T')^+}^{t+\delta}  
 \int_{t-s}^{t+\delta-s}p_{\ell',\ell}(r) F(dr) d \bar{M}^N_{A,\ell'}(s) \right)^2 \right] \non \\ 
 & \quad + 2L  \E\left[  \sum_{\ell'=1}^L \left(   \int_{(t-T')^+}^{t+\delta}  
 \int_{t-s}^{t+\delta-s}p_{\ell',\ell}(r) F(dr)  \bar{\Upsilon}^N_{\ell'}(s) ds \right)^2 \right]\,,
\end{align*}
where the first term converges to zero as $N\to\infty$ by the convergence $ \bar{M}^N_{A,\ell'}(s)\to 0$ in mean square, locally uniformly in $t$, and the second is estimated as the second term in \eqref{eqn-V-diff-p2}. The third term on the right hand side of  \eqref{eqn-V-diff-p1} satisfies (forgetting the sum over $\ell'$ for notational simplicity)
\begin{align*}
\P\left(\sup_{0\le t\le T}|\bar{A}^N_{\ell'}(t+\delta)-\bar{A}^N_{\ell'}(t)|>\epsilon'\right)&\le
\frac{1}{(\epsilon')^2}\E\left[\left(\sup_{0\le t\le T}\int_t^{t+\delta}\bar\Upsilon^N_{\ell'}(s)ds+2\sup_{0\le t\le T+\delta}|\bar{M}^A_{N,\ell'}(t)|\right)^2\right]
\end{align*}
It follows readily from the bound on $\bar\Upsilon^N_{\ell'}$ and the properties of the sequence of martingales $\bar{M}^A_{N,\ell'}$ that
\begin{align*}
\limsup_N\frac{1}{\delta}\P\left(\sup_{0\le t\le T}|\bar{A}^N_{\ell'}(t+\delta)-\bar{A}^N_{\ell'}(t)|>\epsilon'\right)&\le\frac{C}{\epsilon'^2}\delta\,.
\end{align*}
Combining these results gives us the property in \eqref{eqn-V-diff-c1}.

For the second condition in (ii) of Theorem \ref{thm-DD-conv0}, we show that 
for $\ep>0$, as $\delta\to 0$, 
\begin{align}\label{eqn-V-diff-c2}
\limsup_N \sup_{\mfa \in [0, T']} \frac{1}{\delta} \P \left(\sup_{v\in [0,\delta]} \sup_{t\in [0,T]} \big| V^N(t,\mfa+v) - V^N(t,\mfa)\big| >\epsilon \right) \to 0. 
\end{align} 
We have 
\begin{align*}
 & \big| V^N(t,\mfa+v) - V^N(t,\mfa)\big|  \\
 & =  \Bigg|  \frac{1}{N} \sum_{\ell'=1}^L \sum_{i=A^N_{\ell'}((t-\mfa-v)^+)+1}^{A^N_{\ell'}((t-\mfa)^+)} \bigg({\bf1}_{\tau_{i}^{\ell',N}+\eta^{\ell'}_i \le t}  {\bf1}_{X_i^{\ell'}(\eta^{\ell'}_i) = \ell} -  \int_0^{t-\tau_{i}^{\ell',N}} p_{\ell',\ell}(r) F(dr) \bigg)\Bigg| \\
 & \le  \sum_{\ell'=1}^L (\bar{A}^N_{\ell'}((t-\mfa)^+) - \bar{A}^N_{\ell'}((t-\mfa-v)^+)).
\end{align*} 
Thus
\begin{align*}
&  \P \left(\sup_{v\in [0,\delta]} \sup_{t\in [0,T]} \big| V^N(t,\mfa+v) - V^N(t,\mfa)\big| > \epsilon \right) \\
& \le \P \left(\sup_{v\in [0,\delta]} \sup_{t\in [0,T]} \sum_{\ell'=1}^L (\bar{A}^N_{\ell'}((t-\mfa)^+) - \bar{A}^N_{\ell'}((t-\mfa-v)^+)) > \varepsilon \right) \\
& \le \P \left( \sup_{t\in [0,T]} \sum_{\ell'=1}^L (\bar{A}^N_{\ell'}((t-\mfa)^+) - \bar{A}^N_{\ell'}((t-\mfa-\delta)^+)) > \varepsilon \right).
\end{align*} 
Then the claim in \eqref{eqn-V-diff-c2} follows the same argument as in the third term on the right hand side of  \eqref{eqn-V-diff-p1}. 
\end{proof} 

As an immediate consequence of Lemma \ref{lem-bar-mfI1-conv}, we obtain the following convergence results for $(\bar{R}^{N,0}_\ell(t), \bar{R}^{N,1}_\ell(t))$. 

\begin{coro}
 \label{coro-bar-mfI1-conv}
Under Assumptions \ref{AS-initial} and \ref{AS-lambda}, along a convergent subsequence of $\bar{A}^N_\ell$ with limit $\bar{A}_\ell$,  for each $\ell\in \sL$, 
$\bar{R}^{N,0}_\ell(t)\to \bar{R}^{0}_\ell(t)$ in probability, uniformly in $t$, and 
$\bar{R}^{N,1}_\ell(t)\Rightarrow \bar{R}^{1}_\ell(t)$ in $\bD$, as $N\to\infty$, where
\begin{align} 
\bar{R}^{0}_\ell(t) &=  \bar\mfI^{0}_\ell(t, \infty)=
\sum_{\ell'=1}^L  \int_0^{\infty} \bigg( \int_0^t p_{\ell',\ell}(u)   F_0(du|y) \bigg) \bar\mfI_{\ell'}(0,dy)\,,\label{eqn-bar-R0}\\
\bar{R}^{1}_\ell(t) &=  \bar\mfI^{1}_\ell(t, \infty)= \sum_{\ell'=1}^L \int_{0}^t \int_0^{t-s} p_{\ell',\ell}(u) F(du)  d \bar{A}_{\ell'}(s) \,. \label{eqn-bar-R1}
\end{align}
\end{coro}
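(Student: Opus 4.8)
The plan is to deduce both convergences from Lemmas \ref{lem-bar-mfI0-conv} and \ref{lem-bar-mfI1-conv} by letting the age variable tend to $+\infty$, using the identities $\bar{R}^{N,0}_\ell(\cdot)=\bar\mfI^{N,0}_\ell(\cdot,\infty)$ and $\bar{R}^{N,1}_\ell(\cdot)=\bar\mfI^{N,1}_\ell(\cdot,\infty)$ recorded above, together with the parallel identities $\bar{R}^{0}_\ell=\bar\mfI^{0}_\ell(\cdot,\infty)$, $\bar{R}^{1}_\ell=\bar\mfI^{1}_\ell(\cdot,\infty)$ for the limits. The $\bar{R}^{N,1}_\ell$ part will be immediate; for $\bar{R}^{N,0}_\ell$ an extra ingredient, a uniform-in-$N$ tail bound on the initial age profile, will be needed.

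For $\bar{R}^{N,1}_\ell$ I would fix $T>0$ and note that, since $(t-\mfa)^+=0$ whenever $\mfa\ge t$, both $\bar\mfI^{N,1}_\ell(t,\mfa)$ and $\bar\mfI^{1}_\ell(t,\mfa)$ are constant in $\mfa$ on $\{\mfa\ge t\}$; hence $\bar\mfI^{N,1}_\ell(t,T)=\bar{R}^{N,1}_\ell(t)$ and $\bar\mfI^{1}_\ell(t,T)=\bar{R}^{1}_\ell(t)$ for every $t\in[0,T]$. Taking $\mfa=T$ in the joint convergence of Lemma \ref{lem-bar-mfI1-conv} and applying the continuous map $g\mapsto g(\cdot,T)$ (continuity being with respect to locally uniform convergence) then gives $(\bar{A}^N_\ell,\bar{R}^{N,1}_\ell)\Rightarrow(\bar{A}_\ell,\bar{R}^{1}_\ell)$ uniformly on $[0,T]$; as $T$ is arbitrary and the limit $\bar{R}^{1}_\ell$ is continuous, this upgrades to convergence in $\bD$.

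For $\bar{R}^{N,0}_\ell$ (which in fact holds along the full sequence, since Lemma \ref{lem-bar-mfI0-conv} involves no subsequence) I would fix $T>0$ and $A\ge T$ and split $\bar{R}^{N,0}_\ell(t)=\bar\mfI^{N,0}_\ell(t,A)+\big(\bar{R}^{N,0}_\ell(t)-\bar\mfI^{N,0}_\ell(t,A)\big)$ for $t\in[0,T]$. Monotonicity of $\mfa\mapsto\mfI^N_{\ell'}(0,\mfa)$ gives
\[
0\le \bar{R}^{N,0}_\ell(t)-\bar\mfI^{N,0}_\ell(t,A)\le\sum_{\ell'=1}^L\big(\bar{I}^N_{\ell'}(0)-\bar\mfI^N_{\ell'}(0,A-T)\big),\qquad t\in[0,T],
\]
and, since $\int_0^t p_{\ell',\ell}(u)F_0(du|y)\le 1$, the limit obeys $0\le \bar{R}^{0}_\ell(t)-\bar\mfI^{0}_\ell(t,A)\le\sum_{\ell'=1}^L\big(\bar{I}_{\ell'}(0)-\bar\mfI_{\ell'}(0,A-T)\big)$ on $[0,T]$. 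By Assumption \ref{AS-initial} the upper bound in the display converges in probability to $\sum_{\ell'}\big(\bar{I}_{\ell'}(0)-\bar\mfI_{\ell'}(0,A-T)\big)$, and the latter tends to $0$ as $A\to\infty$ because $\bar\mfI_{\ell'}(0,\cdot)$ has finite total mass $\bar{I}_{\ell'}(0)$. Thus, given $\varepsilon>0$, I would first choose $A$ making this number $<\varepsilon/3$, then invoke Lemma \ref{lem-bar-mfI0-conv} for $\sup_{t\in[0,T]}|\bar\mfI^{N,0}_\ell(t,A)-\bar\mfI^{0}_\ell(t,A)|$ and the two displayed bounds for the tails, and conclude by the triangle inequality that $\P\big(\sup_{t\in[0,T]}|\bar{R}^{N,0}_\ell(t)-\bar{R}^{0}_\ell(t)|>\varepsilon\big)\to0$. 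Continuity of $\bar{R}^{0}_\ell$ then yields the stated uniform-in-$t$ convergence in probability.

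The step I expect to require the most care is this last tail control, i.e., transferring the smallness of $\bar{I}_{\ell'}(0)-\bar\mfI_{\ell'}(0,\mfa)$ for large $\mfa$ from the continuous, finite-mass limit profile back to $\bar{I}^N_{\ell'}(0)-\bar\mfI^N_{\ell'}(0,\mfa)$ uniformly in $N$. This is precisely what the $\bD$-convergence of the initial age profiles, together with $\bar\mfI_{\ell'}(0,+\infty)=\bar{I}_{\ell'}(0)<\infty$, in Assumption \ref{AS-initial} supplies; once it is in place, everything else is the continuous-mapping and triangle-inequality bookkeeping indicated above.
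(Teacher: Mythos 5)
Your proof is correct. The paper itself gives no argument, asserting only that the corollary is ``an immediate consequence of Lemma~\ref{lem-bar-mfI1-conv},'' so there is no paper proof to compare against in detail; your write-up supplies exactly the missing reasoning. The treatment of $\bar{R}^{N,1}_\ell$ is genuinely immediate for the reason you give: since $(t-\mfa)^+=0$ once $\mfa\ge t$, both $\bar\mfI^{N,1}_\ell(t,\cdot)$ and its limit are constant for $\mfa\ge t$, so restricting the locally uniform convergence of Lemma~\ref{lem-bar-mfI1-conv} to the slice $\mfa=T$ delivers the claim on $[0,T]$. By contrast, you correctly observe that $\bar\mfI^{N,0}_\ell(t,\mfa)$ is \emph{not} eventually constant in $\mfa$ uniformly in $N$ (the threshold $\tilde\tau_{I^N_\ell(0),0}^{\ell,N}$ grows with $N$), so one cannot just fix a large $\mfa$ and invoke Lemma~\ref{lem-bar-mfI0-conv}; a uniform-in-$N$ tail bound is needed. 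Your split $\bar{R}^{N,0}_\ell(t)=\bar\mfI^{N,0}_\ell(t,A)+(\bar{R}^{N,0}_\ell(t)-\bar\mfI^{N,0}_\ell(t,A))$, with the second piece bounded by $\sum_{\ell'}\big(\bar{I}^N_{\ell'}(0)-\bar\mfI^N_{\ell'}(0,A-T)\big)$ and controlled via Assumption~\ref{AS-initial} (convergence of the initial profile plus $\bar\mfI_{\ell'}(0,+\infty)=\bar{I}_{\ell'}(0)\le 1$), is exactly the right ingredient and closes a gap that the paper's ``immediate consequence'' phrasing elides. The only stylistic quibble: the statement and Lemma~\ref{lem-bar-mfI0-conv} frame the $\bar{R}^{N,0}_\ell$ convergence along the full sequence, as you note parenthetically, so the phrase ``along a convergent subsequence'' in the corollary is needed only for the $\bar{R}^{N,1}_\ell$ part.
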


\medskip

\begin{proof}[{\bf Proof of the convergence of $(\bar{S}^N_\ell, \bar{\mfI}^N_\ell, \bar{R}^N_\ell)_{\ell\in\sL}$.}]

We first consider the convergence provided with the convergent subsequence of $(\bar{A}^N_\ell)_{\ell\in \sL}$ with the limit
$(\bar{A}_\ell)_{\ell\in \sL}$ in Lemma \ref{lem-An-conv}. 

By \eqref{eqn-bar-Sn}, and by the convergence of  $(\bar{M}^{N}_{S,\ell, \ell'}, \ell, \ell\in \sL) \to 0$ in Lemma \ref{lem-barM-conv} and that of $( \bar{S}^N_{\ell}(0), \ell\in \sL) \to  ( \bar{S}_{\ell}(0), \ell\in \sL)$ under Assumption \ref{AS-initial},  applying the continuous mapping theorem, we obtain the  convergence of $(\bar{S}^N_\ell, \ell\in \sL)$ to $(\bar{S}_\ell, \ell\in \sL)$ in $\bD^L$ as $N\to \infty$, where 
\begin{align} \label{eqn-bar-S-1}
\bar{S}_{\ell}(t) &= \bar{S}_{\ell}(0) - \bar{A}_{\ell} (t)  + \sum_{\ell'=1}^L \bigg( \nu_{\ell',\ell}^{S} \int_0^t \bar{S}_{\ell'}(s)ds- \nu_{\ell,\ell'}^{S} \int_0^t \bar{S}_{\ell}(s)ds \bigg). 
\end{align}

We want to show the convergence of $(\bar\mfI^N_\ell(t, \mfa), \ell\in \sL)$ to $(\bar\mfI_\ell(t, \mfa), \ell\in \sL) $ locally uniformly in $t$ and $\mfa$ as $N\to \infty$, where  
\begin{align} \label{eqn-bar-mfI-1} 
\bar\mfI_\ell(t, \mfa) 
&  = \bar\mfI_{\ell}(0,(\mfa-t)^+ ) - \bar\mfI^{0}_\ell(t, \mfa) + \bar{A}_{\ell} (t)  - \bar{A}_{\ell} ((t-\mfa)^+)   
 - \bar\mfI^{1}_\ell(t, \mfa)  \non \\
& \qquad + \sum_{\ell'=1}^L \bigg( \nu_{\ell',\ell}^{I} \int_{(t-\mfa)^+}^t \bar\mfI_{\ell'}(s, \mfa-(t-s)) ds- \nu_{\ell,\ell'}^{I} \int_{(t-\mfa)^+}^t \bar\mfI_{\ell}(s, \mfa-(t-s)) ds \bigg). 
\end{align}
We first deduce from \eqref{eqn-bar-mfI-1} an explicit formula for $\bar\mfI_\ell(t, \mfa)$ in terms of 
$\mfI_{\ell}(0, \cdot)$, $ \bar\mfI^{0}_\ell(t, \mfa)$, $\bar{A}_\ell$ and $ \bar\mfI^{1}_\ell(t, \mfa)$. For that sake, 
we use again the matrix $Q$ defined at the start of section \ref{sec-PDE}.
\begin{lemma} \label{lem-mfI-map} 
The row vector $\{\bar{\mfI}_\ell(t,\mfa),\ 1\le \ell\le L,\ t\ge0, \mfa>0\}$ is given by the formula
\begin{equation}\label{mfI}
\begin{aligned}
\bar{\mfI}(t,\mfa)&=\bar\mfI(0,(\mfa-t)^+ ) - \bar\mfI^{0}(t, \mfa) + \bar{A} (t)  - \bar{A} ((t-\mfa)^+)   
 - \bar\mfI^{1}(t, \mfa)  \\
 &\quad+\int_{(t-\mfa)^+}^t \Big\{\bar\mfI(0,(\mfa-t)^+ ) - \bar\mfI^{0}(s, \mfa-t+s)\\
 &\quad\quad\quad\quad + \bar{A} (s)  - \bar{A} ((t-\mfa)^+)   
 - \bar\mfI^{1}(s, \mfa-t+s)\Big\}e^{Q(t-s)}Qds\,. 
 \end{aligned}
 \end{equation}
\end{lemma}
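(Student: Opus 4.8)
The plan is to integrate the vector equation \eqref{eqn-bar-mfI-1} along its characteristic lines $\{\mfa-t=\mathrm{const}\}$, thereby reducing it to a one-parameter linear Volterra equation, and then to solve that equation explicitly by the variation-of-constants (Duhamel) formula.

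\textbf{Vector form.} I would first set
\[
g(t,\mfa) := \bar\mfI(0,(\mfa-t)^+) - \bar\mfI^{0}(t,\mfa) + \bar{A}(t) - \bar{A}((t-\mfa)^+) - \bar\mfI^{1}(t,\mfa),
\]
a row vector, which is continuous in $(t,\mfa)$ by Assumption \ref{AS-initial} and Lemmas \ref{lem-An-conv}, \ref{lem-bar-mfI0-conv} and \ref{lem-bar-mfI1-conv}. Using the definition of $Q$ from the start of Section \ref{sec-PDE}, one checks that $\sum_{\ell'}(\nu^I_{\ell',\ell}\bar\mfI_{\ell'}-\nu^I_{\ell,\ell'}\bar\mfI_\ell)=(\bar\mfI Q)_\ell$, so that \eqref{eqn-bar-mfI-1} becomes
\[
\bar\mfI(t,\mfa) = g(t,\mfa) + \int_{(t-\mfa)^+}^t \bar\mfI(s,\mfa-(t-s))\,Q\,ds .
\]

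\textbf{Reduction along characteristics and Duhamel.} Fix $(t,\mfa)$ and put $\phi(r):=\bar\mfI(r,\mfa-t+r)$ for $r\in[(t-\mfa)^+,t]$; on this interval $\mfa-t+r\ge0$, so $\phi$ is well defined, and $(r-(\mfa-t+r))^+=(t-\mfa)^+$. Substituting $(t,\mfa)\mapsto(r,\mfa-t+r)$ in the displayed equation, and noting that the age variable inside the integral becomes $\mfa-t+s$, the integrand is exactly $\phi(s)\,Q$, so we obtain the closed equation
\[
\phi(r) = \tilde g(r) + \int_{(t-\mfa)^+}^r \phi(s)\,Q\,ds, \qquad \tilde g(r):=g(r,\mfa-t+r),
\]
a linear Volterra equation with continuous free term and constant-matrix kernel, hence with a unique continuous solution. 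I would then verify that this solution is $\phi(r)=\tilde g(r)+\int_{(t-\mfa)^+}^r \tilde g(s)\,Q\,e^{Q(r-s)}\,ds$, by substituting the candidate into the right-hand side above, applying Fubini to the resulting double integral, and using the identity $\int_0^v e^{Qw}Q\,dw = e^{Qv}-I$ (valid for any square matrix $Q$, so no invertibility is needed). Evaluating at $r=t$ gives $\bar\mfI(t,\mfa)=\tilde g(t)+\int_{(t-\mfa)^+}^t \tilde g(s)\,Q\,e^{Q(t-s)}\,ds$, and since $\tilde g(s)=g(s,\mfa-t+s)$ unfolds precisely to
\[
\bar\mfI(0,(\mfa-t)^+) - \bar\mfI^{0}(s,\mfa-t+s) + \bar{A}(s) - \bar{A}((t-\mfa)^+) - \bar\mfI^{1}(s,\mfa-t+s),
\]
and $Q$ commutes with $e^{Q(t-s)}$, we recover \eqref{mfI}.

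The argument presents no genuine obstacle; the two points to handle with care are the admissibility of the parametrization $r\mapsto(r,\mfa-t+r)$, which rests on $\mfa-t+r\ge0$ throughout $[(t-\mfa)^+,t]$, and the bookkeeping of the $(\cdot)^+$ truncations when passing between the two-variable equation \eqref{eqn-bar-mfI-1} and the one-variable equation for $\phi$. Everything else is a standard method-of-characteristics plus linear-ODE computation.
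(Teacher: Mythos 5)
Your proposal is correct and follows essentially the same route as the paper: restrict \eqref{eqn-bar-mfI-1} along the characteristic $r\mapsto(r,\mfa-t+r)$ on $[(t-\mfa)^+,t]$, recognize a linear Volterra equation with constant matrix $Q$, and write out the Duhamel (variation-of-constants) solution. The only cosmetic difference is that you spell out the Duhamel verification and the commutation $Qe^{Q(t-s)}=e^{Q(t-s)}Q$, where the paper simply invokes the explicit formula for linear ODE systems.
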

\begin{proof}
Equation 
\eqref{eqn-bar-mfI-1} for all $t\ge0$, $\mfa\ge0$ implies that for all $(t-\mfa)^+\le s\le t$, we have the following identity between row vectors
\begin{align*}
\bar\mfI(s, \mfa-t+s)&=\bar\mfI(0,(\mfa-t)^+ ) - \bar\mfI^{0}(s, \mfa-t+s) + \bar{A} (s)- \bar{A} ((t-\mfa)^+) -  \bar\mfI^{1}(s, \mfa-t+s) \\
&\quad  + \int_{(t-\mfa)^+}^s \bar\mfI(r,\mfa-t+r) Q dr\,.
\end{align*}
It follows that $\bar\mfI(t, \mfa) $ is the value at time $s=t$ of the solution to the system of linear ODEs: 
\begin{align}\label{odex}
x(s)=f(s)+\int_{(t-\mfa)^+}^sx(r)Qdr,
\end{align}
where, for $1\le \ell\le L$,
\[ f_\ell(s)=\bar\mfI_{\ell}(0,(\mfa-t)^+ ) - \bar\mfI^{0}_\ell(s, \mfa-t+s) + \bar{A}_{\ell} (s)- \bar{A}_{\ell} ((t-\mfa)^+) -  \bar\mfI^{1}_\ell(s, \mfa-t+s) \,.\]
Formula \eqref{mfI} now follows readily from the explicit formula for the solution of the linear ODE \eqref{odex}.
\end{proof}
Comparing \eqref{eqn-bar-mfI-N} and \eqref{eqn-bar-mfI-1}, we deduce that the row vector $\bar{\mfI}^N(t,\mfa)$ is given by an analog of formula \eqref{mfI}, namely
\begin{equation}\label{mfIN}
\begin{aligned}
\bar{\mfI}^N(t,\mfa)&=\bar\mfI^N(0,(\mfa-t)^+ ) - \bar\mfI^{N,0}(t, \mfa) + \bar{A}^N (t)  - \bar{A}^N ((t-\mfa)^+)   
 - \bar\mfI^{N,1}(t, \mfa) + \bar{\mathcal{M}}^N(t,\mfa) \\
 &\quad+\int_{(t-\mfa)^+}^t \Big\{\bar\mfI^N(0,(\mfa-t)^+ ) - \bar\mfI^{N,0}(s, \mfa-t+s) + \bar{A}^N (s)  - \bar{A}^N ((t-\mfa)^+)   \\&\quad\quad\quad\quad\quad\quad
 - \bar\mfI^{N,1}(s, \mfa-t+s)+ \bar{\mathcal{M}}^N(s,\mfa-t+s)\Big\}Qe^{Q(t-s)}ds,
 \end{aligned}
\end{equation}
where
\[\bar{\mathcal{M}}^N_\ell(t,\mfa)=\sum_{\ell'=1}^L\Big(\bar{M}^{N}_{\mfI, \ell', \ell} (t,\mfa) - \bar{M}^{N}_{\mfI,\ell, \ell'} (t,\mfa)\Big)\,.\]
Comparing \eqref{mfIN} and \eqref{mfI}, it now follows from Assumption \ref{AS-initial}, Lemma \ref{lem-An-conv}, Lemma \ref{lem-barM-conv}, 
Lemma \ref{lem-bar-mfI0-conv} and Lemma \ref{lem-bar-mfI1-conv} that 
$\bar{\mfI}^N(t,\mfa)\Rightarrow\bar{\mfI}(t,\mfa)$
for the topology of locally uniform convergence in $t$ and $\mfa$.

As a consequence, letting $\bar{I}_\ell(t) = \bar\mfI_\ell(t, \infty)$, we also get the weak convergence of $(\bar{I}^N_\ell, \ell\in \sL)$ to $(\bar{I}_\ell, \ell\in \sL) $  locally uniformly in $t$ as $N\to \infty$, where 
\begin{align}\label{eqn-bar-I-1}
\bar{I}_\ell(t) &= \bar{I}_{\ell}(0) + \bar{A}_{\ell} (t)  -\bar{R}^0_\ell(t)  - \bar{R}^1_\ell(t) 
+  \sum_{\ell'=1}^L \int_0^t \Big( \nu^I_{\ell', \ell}\bar{I}_{\ell'}(t) - \nu^I_{\ell, \ell'}\bar{I}_{\ell}(t)  \Big) ds\,.       
\end{align} 
Then by \eqref{eqn-bar-Rn}, and by the convergence of $(\bar{M}^{N}_{R,\ell, \ell'}, \ell, \ell'\in \sL) \to 0$ in Lemma \ref{lem-barM-conv}, of  $(\bar{R}^{N,0}_\ell, \bar{R}^{N,1}_\ell, \ell\in \sL) \to (\bar{R}^{0}_\ell, \bar{R}^{1}_\ell, \ell\in \sL)$  in  Corollary \ref{coro-bar-mfI1-conv}, 
and that of $( \bar{R}^N_{\ell}(0), \ell\in \sL) \to  ( \bar{R}_{\ell}(0), \ell\in \sL)$ under Assumption \ref{AS-initial},  applying the continuous mapping theorem, we obtain the convergence of $(\bar{R}^N_\ell, \ell\in \sL)$ to $(\bar{R}^N_\ell, \ell\in \sL)$ in $\bD^L$ as $N\to \infty$, where 
\begin{align} \label{eqn-bar-R-1} 
\bar{R}_\ell(t)
& =  \bar{R}_\ell(0)+  \bar{R}^{0}_\ell(t) + \bar{R}^{1}_\ell(t)  + \sum_{\ell'=1}^L \bigg( \nu_{\ell',\ell}^{R} \int_0^t \bar{R}_{\ell'}(s)ds- \nu_{\ell,\ell'}^{R} \int_0^t \bar{R}_{\ell}(s)ds \bigg)\,. 
\end{align} 

Next we identify the limit  $(\bar{A}_\ell, \ell \in \sL)$ in terms of the limits $(\bar{\mfF}_\ell, \bar{S}_\ell, \bar{I}_\ell, \bar{R}_\ell, \ell \in \sL)$ and let $\bar{B}_\ell= \bar{S}_\ell+ \bar{I}_\ell+ \bar{R}_\ell$.
Recall that we have shown in the proof of Lemma \ref{uniqBC} that for each $0 \le t \le T$, 
$1\le\ell\le L$, $\bar{S}_\ell(t)+ \bar{I}_\ell(t)+ \bar{R}_\ell(t) \ge c_T$. 
The mapping from $(\bar{S}_\ell(t), \bar{I}_\ell(t), \bar{R}_\ell(t),  \sum_{\ell'=1}^L \beta_{\ell',\ell} \bar{\mfF}_{\ell'})$ to $\bar\Upsilon_\ell(t)$ is continuous in the Skorohod topology whenever $\bar{S}_\ell(t)+ \bar{I}_\ell(t)+ \bar{R}_\ell(t)>0$. Then we obtain the convergence 
\[
\bar\Upsilon^N_\ell(t) = \frac{\bar{S}^N_\ell(t) \sum_{\ell'=1}^L \beta_{\ell',\ell} \bar{\mfF}^N_{\ell'}}{(\bar{S}^N_\ell(t)+ \bar{I}^N_\ell(t)+ \bar{R}^N_\ell(t))^\gamma}  \Rightarrow \bar\Upsilon_\ell(t) :=\frac{\bar{S}_\ell(t) \sum_{\ell'=1}^L \beta_{\ell',\ell} \bar{\mfF}_{\ell'}}{(\bar{S}_\ell(t)+ \bar{I}_\ell(t)+ \bar{R}_\ell(t))^\gamma} \,,
\]
in $\bD$ as $N\to\infty$. Then by Lemma \ref{lem-An-conv}, we obtain the convergence of $(\bar{A}^N_\ell, \ell \in \sL)$
to $(\bar{A}_\ell, \ell \in \sL)$ in $\bD^L$, where 
\[
\bar{A}_\ell(t) = \int_0^t \bar\Upsilon_\ell(s) ds\,,
\]
with $\bar\Upsilon_\ell(s) $ given above. Since all converging sub-sequences have the same limit, which is deterministic, we have the convergence in probability of the whole sequence.
This completes the proof. 
\end{proof}

\medskip

\section{Appendix}

The following theorem was stated in Theorem 5.1 in \cite{PP-2021}. It extends the Corollary on page 83 of \cite{billingsley1999convergence}, and also   Theorem 3.5.1 in Chapter 6 of \cite{khoshnevisan2002multiparameter}
in  the space $\bC([0,1]^k, \RR)$. 

\begin{theorem} \label{thm-DD-conv0}
Let $\{X^N: N \ge 1\}$ be a sequence of random elements in $\bD_\bD$.
If the following two conditions are satisfied: for any $T,S>0$, 
\begin{itemize}
\item[(i)] for any $\ep>0$, $  \sup_{t \in [0,T]}\sup_{s\in [0,S]} \P \big(  |X^N(t, s)|> \ep \big) \to 0$ as $N\to\infty$, and
\item[(ii)] for any $\ep>0$, as $\delta\to0$,
\begin{align*} 
& \limsup_{N\to\infty}  \sup_{t\in [0,T]} \frac{1}{\delta}  \P \bigg(  \sup_{u \in [0,\delta]}\sup_{s \in [0,S]} |X^N(t+u,s) - X^N(t,s)| > \ep\bigg) \to 0, \\
& \limsup_{N\to\infty}  \sup_{s\in [0,S]} \frac{1}{\delta}  \P \bigg(  \sup_{v \in [0,\delta]}\sup_{t \in [0,T]} |X^N(t,s+v) - X^N(t,s)| > \ep\bigg) \to 0, 
\end{align*}
\end{itemize}
then  $X^N(t,s)\to 0 $ in probability, locally uniformly in $t$ and $s$, as $N\to\infty$. 
\end{theorem}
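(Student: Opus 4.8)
The plan is to fix $T,S>0$ and prove that $\sup_{(t,s)\in[0,T]\times[0,S]}|X^N(t,s)|\to 0$ in probability, which is exactly the asserted local uniform convergence. Since $X^N$ has sample paths in $\bD_\bD$, it is right-continuous in each coordinate, so the supremum of $|X^N|$ over any compact rectangle equals the supremum over a fixed countable dense subset of that rectangle (chosen to contain all the grid points introduced below); in particular all the events below are measurable and one may freely work with these suprema. The argument is a two–parameter version of the oscillation criterion behind the Corollary on p.~83 of \cite{billingsley1999convergence}: first I upgrade the single–coordinate local oscillation estimates of condition (ii) to \emph{global} oscillation estimates, then pin $X^N$ down on a finite grid using condition (i), and finally interpolate.

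Fix $\ep,\eta>0$. The crux is the following global oscillation bound, derived from the first part of condition (ii): there exist $\delta_1\in(0,1)$ and $N_1$ such that for all $N\ge N_1$,
\[
\P\Big(\sup_{\substack{0\le t\le t'\le T\\ t'-t\le\delta_1}}\ \sup_{s\in[0,S]}\big|X^N(t',s)-X^N(t,s)\big|>\frac{\ep}{3}\Big)\le\frac{\eta}{3}.
\]
To obtain this, apply condition (ii) at level $\ep/9$ and pick a constant $\kappa$ with $\kappa(T+1)\le\eta/3$: one can choose $\delta_1\in(0,1)$ small and $N_1$ large so that for $N\ge N_1$ and every $t\in[0,T]$, $\P\big(\sup_{u\in[0,\delta_1]}\sup_{s\in[0,S]}|X^N(t+u,s)-X^N(t,s)|>\ep/9\big)\le\kappa\delta_1$. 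Partition $[0,T]$ by $t_k=k\delta_1$ for $0\le k\le\lceil T/\delta_1\rceil-1$ and take a union bound over the corresponding $\lceil T/\delta_1\rceil\le T/\delta_1+1$ events; their total probability is at most $(T/\delta_1+1)\kappa\delta_1\le\kappa(T+1)\le\eta/3$. On the complement, for any $0\le t\le t'\le T$ with $t'-t\le\delta_1$ one telescopes the increment $X^N(t',s)-X^N(t,s)$ through at most two grid points of $\{t_k\}$ lying in or adjacent to $[t,t']$, bounding it by at most three increments of size $\le\ep/9$, so $\sup_{s}|X^N(t',s)-X^N(t,s)|\le\ep/3$; the inner supremum over $s$ survives because the oscillation events hold uniformly in $s$. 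Symmetrically, the second part of condition (ii) yields $\delta_2\in(0,1)$ and $N_2$ with
\[
\P\Big(\sup_{0\le t\le T}\ \sup_{\substack{0\le s\le s'\le S\\ s'-s\le\delta_2}}\big|X^N(t,s')-X^N(t,s)\big|>\frac{\ep}{3}\Big)\le\frac{\eta}{3}\qquad(N\ge N_2).
\]

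Now set $\delta=\min(\delta_1,\delta_2)$ and let $\sG=\{(i\delta\wedge T,\,j\delta\wedge S):0\le i\le\lceil T/\delta\rceil,\ 0\le j\le\lceil S/\delta\rceil\}$, a finite set. By condition (i) and finiteness of $\sG$, there is $N_3$ with $\P\big(\max_{(t_i,s_j)\in\sG}|X^N(t_i,s_j)|>\ep/3\big)\le\eta/3$ for $N\ge N_3$. On the intersection of the three events above (probability $\ge1-\eta$ once $N\ge\max(N_1,N_2,N_3)$), take any $(t,s)\in[0,T]\times[0,S]$ and choose $(t_i,s_j)\in\sG$ with $t-t_i\in[0,\delta)$ and $s-s_j\in[0,\delta)$; then
\[
|X^N(t,s)|\le\big|X^N(t,s)-X^N(t_i,s)\big|+\big|X^N(t_i,s)-X^N(t_i,s_j)\big|+\big|X^N(t_i,s_j)\big|\le\frac{\ep}{3}+\frac{\ep}{3}+\frac{\ep}{3}=\ep,
\]
the three terms being controlled, respectively, by the global $t$–oscillation event (at the fixed $s$), the global $s$–oscillation event (at the fixed abscissa $t=t_i$), and the grid event. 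Hence $\P\big(\sup_{[0,T]\times[0,S]}|X^N|>\ep\big)\le\eta$ for all $N\ge\max(N_1,N_2,N_3)$, and since $\ep,\eta>0$ were arbitrary this proves the claim. The main obstacle is the first step, i.e.\ converting the local, single–coordinate oscillation estimates of condition (ii) into the global oscillation bounds above; the subtlety is that the inner uniform supremum over the other coordinate must be carried intact through the union bound and the telescoping. The remaining interpolation against the finitely many grid points controlled by condition (i) is routine, the only extra care being the standard reduction of a rectangle supremum to a countable supremum via right-continuity of $X^N$ in each coordinate.
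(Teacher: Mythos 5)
The paper does not give its own proof of Theorem~\ref{thm-DD-conv0}; the Appendix states it with a citation to Theorem~5.1 of \cite{PP-2021} and describes it as an extension of the Corollary on p.~83 of \cite{billingsley1999convergence}. Your argument is correct and is precisely that kind of extension: union bound over a $\delta$-grid upgrades the local one-coordinate oscillation estimates in (ii) (which are already uniform in the inner coordinate) to global modulus-of-continuity bounds, and interpolation against the finite grid pinned down by (i) finishes. Two small points worth keeping in mind: the choice $\ep/9$ is what makes the $\le 3$-step telescope land at $\ep/3$ (one must indeed cross at most one grid point for $t'-t\le\delta_1$, and the increment $|X^N(t_{k+1},\cdot)-X^N(t_k,\cdot)|$ is covered because $u=\delta_1$ is included in $\sup_{u\in[0,\delta_1]}$); and the measurability reduction to a countable dense set needs the pointwise right-continuity $X^N(t^+,s)=X^N(t,s)$, which is slightly stronger than the $J_1$ right-continuity of $t\mapsto X^N(t,\cdot)$ built into $\bD_\bD$ — this is a subtlety shared with the paper's statement itself and is harmless in the present application.
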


The following lemma was stated in Lemma 5.1 in \cite{PP-2021}. The spaces $\bD_\uparrow$ and $\bC_\uparrow$ are the subspaces of $\bD$ and $\bC$ of increasing functions. 
\begin{lemma}\label{le:Portmanteau}
Let $f\in \bD$ and $\{g_N\}_{N\ge1}$ be a sequence of elements of $\bD_\uparrow$ which is such that
$g_N\to g$ locally uniformly, where $g\in \bC_\uparrow$. Then for any $T>0$,
\[ \int_{[0,T]} f(t) g_N(dt)\to \int_{[0,T]} f(t)g(dt)\, .\]
\end{lemma}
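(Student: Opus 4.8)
The plan is to prove the convergence by approximating the c\`adl\`ag integrand $f$ on $[0,T]$ by a step function, and then passing to the limit in the resulting finite sum of Lebesgue--Stieltjes increments of $g_N$. The hypothesis $g\in\bC_\uparrow$ enters in an essential way: it makes the limiting measure $g(dt)$ atomless, so that $g(dt)$ charges no mass on the (at most countable) set of discontinuities of $f$, which is exactly what legitimises a step-function approximation even though $f$ need not be continuous. Equivalently, one could argue that $g_N(dt)\Rightarrow g(dt)$ weakly on $[0,T]$ — because the distribution functions converge at every point and every point is a continuity point of the atomless limit — and then invoke the extended continuous mapping theorem for the bounded, $g(dt)$-a.e.\ continuous integrand $f$; the argument below is just a self-contained version of this.

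Concretely, I would proceed in three steps. First, since $f$ is c\`adl\`ag on the compact $[0,T]$ it is bounded there, say $\sup_{[0,T]}|f|\le C$, and by the standard modulus-of-continuity characterisation of $\bD$ (see \cite{billingsley1999convergence}), for each $\varepsilon>0$ there is a partition $0=t_0<t_1<\cdots<t_m=T$ with $\sup_{s,s'\in[t_{j-1},t_j)}|f(s)-f(s')|<\varepsilon$ for every $j$; set $f_\varepsilon:=\sum_{j=1}^m f(t_{j-1})\mathbf{1}_{[t_{j-1},t_j)}+f(T)\mathbf{1}_{\{T\}}$, so that $\sup_{[0,T]}|f-f_\varepsilon|\le\varepsilon$. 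Second, from $g_N\to g$ locally uniformly one gets $g_N(T)\to g(T)$ and $g_N(0)\to g(0)$, so the total masses $g_N(T)-g_N(0)$ stay bounded; fixing $M$ with $g_N(T)-g_N(0)\le M$ for all large $N$ and $g(T)-g(0)\le M$, this yields $\big|\int_{[0,T]}f\,g_N(dt)-\int_{[0,T]}f_\varepsilon\,g_N(dt)\big|\le M\varepsilon$ for all large $N$ and $\big|\int_{[0,T]}f\,g(dt)-\int_{[0,T]}f_\varepsilon\,g(dt)\big|\le M\varepsilon$. Third — the heart of the matter — I would show that for each fixed $\varepsilon$, $\int_{[0,T]}f_\varepsilon\,g_N(dt)\to\int_{[0,T]}f_\varepsilon\,g(dt)$: writing the left-hand side as the finite sum $\sum_{j=1}^m f(t_{j-1})\big(g_N(t_j^-)-g_N(t_{j-1}^-)\big)+f(T)\big(g_N(T)-g_N(T^-)\big)$ (with the convention $g_N(0^-)=g_N(0)$), and using that uniform convergence of $g_N$ on a neighbourhood of $[0,T]$ together with continuity of $g$ forces $g_N(s^-)\to g(s^-)=g(s)$ for every $s\in[0,T]$ — in particular the boundary atom $g_N(T)-g_N(T^-)\to0$ — the sum converges term by term to $\sum_{j=1}^m f(t_{j-1})\big(g(t_j)-g(t_{j-1})\big)=\int_{[0,T]}f_\varepsilon\,g(dt)$, the last identity again using that $g$ is atomless. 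Combining the three steps gives $\limsup_N\big|\int_{[0,T]}f\,g_N(dt)-\int_{[0,T]}f\,g(dt)\big|\le 2M\varepsilon$, and letting $\varepsilon\downarrow0$ completes the proof.

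The one point requiring a little care — and the only part I would call a genuine, if mild, obstacle — is the bookkeeping at the endpoints $0$ and $T$ and the possible atoms of $g_N$ there: these are harmless precisely because $g_N\to g$ uniformly on a neighbourhood of those points and $g$ is continuous, so any atom of $g_N$ at a fixed point (in particular at $T$) vanishes in the limit, and the half-open versus closed ambiguities in writing out $\int_{[0,T]}f_\varepsilon\,g_N(dt)$ disappear against the atomless limit. Everything else is a routine $3\varepsilon$-type estimate.
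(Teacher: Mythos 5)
The paper itself does not re-prove this lemma: it simply cites Lemma 5.1 of \cite{PP-2021} in the Appendix, so there is no in-text proof to compare against. Your argument is correct on its own merits and is a self-contained instance of the standard Portmanteau/Lebesgue--Stieltjes argument. The three steps are all sound: (i) the existence of a partition $0=t_0<\cdots<t_m=T$ with oscillation of $f$ below $\varepsilon$ on each $[t_{j-1},t_j)$ is the standard characterisation of $\bD([0,T])$; (ii) the total masses $g_N(T)-g_N(0)$ are eventually bounded because $g_N(T)\to g(T)$ and $g_N(0)\to g(0)$, which controls the truncation error uniformly in $N$; and (iii) the term-by-term convergence of the finite sums rests on $g_N(s^-)\to g(s^-)=g(s)$, which does follow from locally uniform convergence of $g_N$ to the continuous $g$ (for $N$ large, $g_N$ is uniformly within $\varepsilon/2$ of $g$ near $s$, and $g$ is within $\varepsilon/2$ of $g(s)$ there, so $g_N(s^-)$ is within $\varepsilon$ of $g(s)$). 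Your treatment of the boundary atom $g_N(T)-g_N(T^-)\to 0$ and of the endpoint convention at $0$ is the right bookkeeping. The one conceptual remark worth retaining is the one you already make: continuity of $g$ is what renders the limiting measure atomless, hence null on the at-most-countable discontinuity set of $f$, and this is exactly why a step-function approximation of a merely c\`adl\`ag $f$ is legitimate; without it the claim would fail (e.g.\ $f=\mathbf{1}_{\{1\}}$, $g_N$ with an atom drifting onto $1$).
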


\bigskip

\section*{Acknowledgement}
The authors warmly thank the anonymous Referee for correcting an error in an earlier version and for many helpful comments that improved the exposition of the paper. G. Pang is partly supported by the US National Science of Foundation grant DMS-2216765.

\bibliographystyle{abbrv}
\bibliography{Epidemic-Age-PDE}

\end{document}